\newcommand{\overbar}[1]{\mkern 1.5mu\overline{\mkern-1.5mu#1\mkern-1.5mu}\mkern 1.5mu}
\newtheorem{thm}{Theorem}[section]
\newtheorem{prop}[thm]{Proposition}
\newtheorem{lem}[thm]{Lemma}
\theoremstyle{definition}
\newtheorem{defn}[thm]{Definition}
\theoremstyle{remark}
\newtheorem{rem}[thm]{Remark}
\pgfplotsset{compat=1.9}
\newcommand{\RN}[1]{%
  \textup{\uppercase\expandafter{\romannumeral#1}}%
}
\newcounter{x}
\numberwithin{equation}{section}
\author[Junfu Yao]{Junfu Yao}
\email{jyao21@jhu.edu}
\title{Relative Entropy of Hypersurfaces in Hyperbolic Space}
\begin{document}
\begin{abstract}
We study a notion of relative entropy for certain hypersurfaces in hyperbolic space. We relate this quantity to the renormalized area introduced by Graham-Witten\cite{GrahamWitten}. We also obtain a monotonicity formula for relative entropy applied to mean curvature flows in hyperbolic space.
\end{abstract}
\maketitle

\section{Introduction}
Let $n\geq2$ and $\mathbb{H}^{n+1}$ denote the ($n+1$)-dimensional hyperbolic space. We recall that the Poincar\'e half-space model for $\mathbb{H}^{n+1}$ is the open upper half-space
\begin{equation*}
    \mathbb{R}^{n+1}_+=\{(\textbf{x},y)\in\mathbb{R}^n\times\mathbb{R}:\ y>0\},
\end{equation*}
equipped with the metric
\begin{equation*}
    g_P=\frac{1}{y^2}(d\textbf{x}\otimes d\textbf{x}+dy\otimes dy)=\frac{1}{y^2}g_{\mathbb{R}^{n+1}}.
\end{equation*}
In this model, we have the standard compatification: $\overbar{\mathbb{H}}^{n+1}=\mathbb{H}^{n+1}\cup(\mathbb{R}^n\times\{0\})\cup\{\infty\}$. The set $(\mathbb{R}^n\times\{0\})\cup\{\infty\}=\mathbb{S}^{n+1}$ is called the \textit{ideal boundary} of $\mathbb{H}^{n+1}$ and denoted by $\partial_\infty\mathbb{H}^{n+1}$. The ideal boundary of $\mathbb{H}^{n+1}$ is associated to a unique conformal structure. A \textit{boundary defining function} $r$ on $\overbar{\mathbb{H}}^{n+1}$ is a function such that the metric $r^2g_P$ extends smoothly to $\overbar{\mathbb{H}}^{n+1}$.

Minimal hypersurfaces in $\mathbb{H}^{n+1}$ are critical points of the functional
\begin{equation}\label{31}
    \text{Vol}(\Sigma)=\int_{\Sigma}1\cdot d\mathcal{H}^n_{\mathbb{H}^{n+1}}=\int_{\Sigma}\frac{1}{y^n}\cdot d\mathcal{H}^n.
\end{equation}
Minimal hypersurfaces in hyperbolic space have been extensively studied, see \cite{Anderson1982}, \cite{Anderson1983}, \cite{Gabai97}, \cite{deOliveiraSoret98}, \cite{MartinWhite2013}, \cite{HardtLin}, \cite{FLin}, \cite{TonegawaCMCinHpSpace}, \cite{Coskunuzer2006Generic}, \cite{Coskunuzer2011Number}, \cite{AlexakisMazzeo} and \cite{HuangWang2015Counting}.

As one may observe, there are no closed minimal hypersurfaces in $\mathbb{H}^{n+1}$. So, it is natural to consider minimal hypersurfaces that are asymptotic to the ideal boundary (See Definition \ref{66} for the terminology). However, the volume (\ref{31}) is always divergent for any $\Sigma$ that has asymptotic boundary. Let $r$ be any boundary defining function for $\overbar{\mathbb{H}}^{n+1}$. In \cite{GrahamWitten}, Graham-Witten showed that for a sufficiently regular minimal hypersurface $\Sigma$ and an even integer $n$, the volume of $\{r\geq\epsilon\}\cap\Sigma$ has an expansion
\begin{equation}\label{36}
    \text{Vol}_\epsilon(\Sigma;r)=\int_{\Sigma\cap\{r\geq\epsilon\}}1\cdot d\mathcal{H}^n_{\mathbb{H}^{n+1}}= \frac{c_0}{\epsilon^{n-1}}+\frac{c_2}{\epsilon^{n-3}}+\cdots+\frac{c_{n-2}}{\epsilon}+c_n+o(1).
\end{equation}
If $n$ is odd, the expansion takes the following form:
\begin{equation}\label{37}
    \text{Vol}_\epsilon(\Sigma;r)=\int_{\Sigma\cap\{r\geq\epsilon\}}1\cdot d\mathcal{H}^n_{\mathbb{H}^{n+1}}= \frac{c_0}{\epsilon^{n-1}}+\frac{c_2}{\epsilon^{n-3}}+\cdots+\frac{c_{n-3}}{\epsilon^2}+c_{n-1}\log\frac{1}{\epsilon}+c_{n}+o(1).
\end{equation}
The coefficient $c_n$ is called the \textit{renormalized area} of $\Sigma$ relative to $r$, denoted by $\mathcal{A}(\Sigma;r)$. The authors in \cite{GrahamWitten} further showed that when $n$ is even, $\mathcal{A}(\Sigma;r)$ is conformally invariant. That is, $\mathcal{A}(\Sigma;r)\equiv\mathcal{A}(\Sigma)$ is independent of the choice of boundary defining functions. In \cite{AlexakisMazzeo}, Alexakis-Mazzeo proved that for a $C^2$-asymptotic surface $\Sigma$ in $\mathbb{H}^3$, if $\Sigma$ meets $\partial_\infty\mathbb{H}^3$ orthogonally (which holds for all minimal surfaces), then $\mathcal{A}(\Sigma)$ can be expressed as
\begin{equation*}
    \mathcal{A}(\Sigma)=-2\pi\chi(\Sigma)-\frac{1}{2}\int_{\Sigma}|\hat{A}_\Sigma|^2d\mathcal{H}^2_\mathbb{H},
\end{equation*}
where $\chi(\Sigma)$ is the Euler characteristic and $\hat{A}_\Sigma$ is the trace-free second fundamental form of $\Sigma$. In \cite{BernsteinIsoper}, Bernstein proved an isoperimetric inequality for renormalized area. See also \cite[Theorem 8]{NguyenWeightedMon} for the same result using a different approach. Recently, Tyrrell \cite{TyrrellRenADim4} derived a formula for renormalized area in $\mathbb{H}^5$.

We follow \cite{BWRelativeEntropy} to define a \textit{relative entropy} of two minimal hypersurfaces $\Sigma$ and $\Sigma'$ with $\partial_\infty\Sigma=\partial_\infty\Sigma'$. Let $r$ be any fixed boundary defining function. We consider
\begin{equation}\label{50}
    E^{(r)}_{rel}[\Sigma',\Sigma]:=\lim_{\epsilon\to0^+}\Big(\text{Vol}_\epsilon(\Sigma';r)-\text{Vol}_\epsilon(\Sigma;r)\Big).
\end{equation}
We will show that $E^{(r)}_{rel}[\Sigma',\Sigma]$ is independent of the choice of boundary defining functions, and that the relative entropy $E^{(r)}_{rel}[\Sigma',\Sigma]$ is finite and is equal to the difference of $\mathcal{A}(\Sigma;r)$ and $\mathcal{A}(\Sigma';r)$. It is then worth noting that, although in odd dimensions, $\mathcal{A}(\Sigma;r)$ is in general dependent on the choice of boundary defining functions, the difference, $\mathcal{A}(\Sigma';r)-\mathcal{A}(\Sigma;r)$, is conformally invariant.

The first result is the following:

\begin{thm}\label{35}
Assume $\alpha\in(0,1)$. Let $M$ be a $C^{n,\alpha}$ submanifold in $\partial_\infty\mathbb{H}^{n+1}$ and $r$ be a boundary defining function for $\mathbb{H}^{n+1}$. Let $\Sigma$ and $\Sigma'$ be two minimal hypersurfaces in $\mathbb{H}^{n+1}$ that are both $C^{n,\alpha}$-asymptotic to $M$ $($see $\mathrm{Definition}$ $\ref{66}$ for the precise definition$)$. Then $E^{(r)}_{rel}[\Sigma',\Sigma]$ is a finite number and is independent of the choice of boundary defining function $r$. That is, $E^{(r)}_{rel}[\Sigma',\Sigma]\equiv E_{rel}[\Sigma',\Sigma]$. Moreover,
\begin{equation*}
    E_{rel}[\Sigma',\Sigma]=\mathcal{A}(\Sigma';r)-\mathcal{A}(\Sigma;r).
\end{equation*}
\end{thm}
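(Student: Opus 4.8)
The plan is to establish the three assertions of the theorem in sequence: finiteness of $E^{(r)}_{rel}$, independence of the boundary defining function, and the identification with the difference of renormalized areas. The unifying tool will be the Graham-Witten expansions (\ref{36}) and (\ref{37}), together with a careful comparison of the \emph{divergent} coefficients $c_0,c_2,\dots$ for $\Sigma$ and $\Sigma'$.

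First I would show that the divergent part of $\text{Vol}_\epsilon(\Sigma;r)$ depends only on the asymptotic boundary $M$ and on finitely many jets of $M$, not on the minimal hypersurface filling it. The point is that the coefficients $c_0,c_2,\dots,c_{n-2}$ (and $c_{n-1}$ in the odd case) in the Graham-Witten expansion are \emph{local} quantities: they are integrals over $M$ of universal polynomial expressions in the derivatives of $M$ (with respect to the representative metric determined by $r$) and the ambient geometry, obtained by solving the minimal-surface equation formally to finite order near the boundary. Since $\Sigma$ and $\Sigma'$ are both $C^{n,\alpha}$-asymptotic to the \emph{same} $M$, their Fefferman-Graham-type expansions agree to the order that determines $c_0,\dots,c_{n-2}$ (resp. $c_{n-1}$); the first term where they may differ is precisely the renormalized coefficient $c_n$ (resp. $c_{n-1}$ is still forced, and $c_n$ is free). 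Consequently
\begin{equation*}
    \text{Vol}_\epsilon(\Sigma';r)-\text{Vol}_\epsilon(\Sigma;r)=\big(c_n(\Sigma';r)-c_n(\Sigma;r)\big)+o(1),
\end{equation*}
so the limit (\ref{50}) exists, is finite, and equals $\mathcal{A}(\Sigma';r)-\mathcal{A}(\Sigma;r)$. This simultaneously yields finiteness and the last displayed formula of the theorem.

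Next, for independence of $r$, I would use the transformation law for the renormalized area under a conformal change of boundary defining function. Writing $\tilde r = e^{\omega}r$ for a smooth $\omega$ on $\overbar{\mathbb{H}}^{n+1}$, the difference $\mathcal{A}(\Sigma;\tilde r)-\mathcal{A}(\Sigma;r)$ is again a \emph{local} integral over $M$ of a universal expression built from $\omega|_M$, its normal derivatives, and the jets of $M$ — this is the odd-dimensional anomaly computed in the spirit of \cite{GrahamWitten}. Crucially, this anomaly depends on $\Sigma$ only through $M$ and its finite jets, hence is the same for $\Sigma$ and $\Sigma'$. Subtracting,
\begin{equation*}
    \big(\mathcal{A}(\Sigma';\tilde r)-\mathcal{A}(\Sigma';r)\big)-\big(\mathcal{A}(\Sigma;\tilde r)-\mathcal{A}(\Sigma;r)\big)=0,
\end{equation*}
which rearranges to $\mathcal{A}(\Sigma';\tilde r)-\mathcal{A}(\Sigma;\tilde r)=\mathcal{A}(\Sigma';r)-\mathcal{A}(\Sigma;r)$, i.e. $E^{(\tilde r)}_{rel}=E^{(r)}_{rel}$. (In the even case there is nothing to prove, since each $\mathcal{A}(\Sigma)$ is already conformally invariant.) Alternatively, one can argue directly from (\ref{50}): expressing $\text{Vol}_\epsilon(\Sigma;r)$ versus $\text{Vol}_\epsilon(\Sigma;\tilde r)$ amounts to comparing the regions $\{r\ge\epsilon\}$ and $\{\tilde r\ge\epsilon\}$ near $M$, and the symmetric difference contributes a term that, upon taking $\Sigma'$ minus $\Sigma$, cancels because it depends only on $M$.

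The main obstacle is the first step: making precise that the divergent coefficients — and the $r$-anomaly of $c_n$ — are determined by a finite jet of $M$ and therefore coincide for $\Sigma$ and $\Sigma'$. This requires setting up the normal form for a $C^{n,\alpha}$-asymptotic minimal hypersurface (a graphical expansion over $M$ in powers of the boundary defining parameter, with a possible log term in odd dimensions), verifying that the minimal-surface equation recursively determines all coefficients strictly below the renormalized order, and controlling the error terms with the limited $C^{n,\alpha}$ regularity rather than smoothness. Once that regularity bookkeeping is in place — presumably packaged in the definition referenced as Definition \ref{66} and a preceding lemma on the expansion — the cancellations above are formal. I expect the bulk of the real work to be in this asymptotic analysis, with the conformal-invariance step following as a corollary of the locality it establishes.
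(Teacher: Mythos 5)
Your route inverts the paper's logic, and the inversion is exactly where the gap sits. You propose to prove finiteness of $E^{(r)}_{rel}[\Sigma',\Sigma]$ by first showing that the divergent coefficients $c_0,\dots,c_{n-2}$ (and $c_{n-1}$) in (\ref{36})--(\ref{37}) are local invariants of $M$, hence identical for $\Sigma$ and $\Sigma'$. But at $C^{n,\alpha}$ regularity this locality is not a citation to \cite{GrahamWitten} (whose expansion is formal/smooth); it is precisely the hard analytic content, and your proposal defers it (``the bulk of the real work''). What is actually needed is a quantitative statement that two minimal hypersurfaces asymptotic to the same $M$ agree near the boundary to the order that controls all divergent terms; in the paper this is Lemma \ref{45} (the recursion from the graphical minimal surface equation determines the normal derivatives at $y=0$ up to order $n$, degenerating exactly at the renormalized order) together with the maximum-principle Lemma \ref{30}, which upgrades jet agreement to the pointwise bound $|h|\leq Cy^{n+1}$. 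Without such a lemma, ``their expansions agree to the order that determines $c_0,\dots,c_{n-2}$'' is an assertion, not a proof. The same issue undermines your conformal-invariance step for $n$ odd: a transformation law $\mathcal{A}(\Sigma;\tilde r)-\mathcal{A}(\Sigma;r)=(\text{local integral over }M)$ at $C^{n,\alpha}$ regularity would have to be derived from scratch, and your fallback argument comparing $\{r\geq\epsilon\}$ with $\{\tilde r\geq\epsilon\}$ is too coarse: each surface carries hyperbolic area of order $O(1)$ in the transition region, so the cancellation in the difference again requires the quantitative $O(y^{n+1})$ closeness of $\Sigma'$ to $\Sigma$, which you have not established.

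For contrast, the paper never proves locality of the coefficients at all. It first proves Theorem \ref{relative entropy}: existence of the limit, a lower bound, and independence of $r$, all by applying the divergence theorem to the extension $\mathbf{X}$ of the unit normal of $\Sigma$ over the region between $\Sigma$ and $\partial^*U$, using the thin-neighborhood property supplied by Lemmas \ref{45} and \ref{30} (with Remark \ref{47} handling $n=2$); finiteness then follows from the symmetry $E_{rel}[\Sigma',\Sigma]=-E_{rel}[\Sigma,\Sigma']$. Only afterwards does it establish the expansion of each $\mathcal{H}^n_\mathbb{H}(\Sigma\cap\{r\geq\epsilon\})$ separately, by the coarea formula and a Taylor expansion of the $C^{n-1,\alpha}$ slice function, and the equality $b_i=b_i'$ for $i\leq n-1$ is then a \emph{consequence} of the already-known finiteness, not an input. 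If you want to salvage your approach, you must prove the jet-agreement/closeness estimate (the analogue of Lemmas \ref{45} and \ref{30}) and then carry out the coefficient comparison and the odd-$n$ anomaly computation at $C^{n,\alpha}$ regularity; as written, the proposal assumes its key claims.
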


In Theorem \ref{35}, $C^{n,\alpha}$-asymptotic regularity is to guarantee the existence of the expansions (\ref{36}) and (\ref{37}). However, when $n=2$, the authors in \cite{AlexakisMazzeo} used the Gauss-Codazzi equations and the Gauss-Bonnet formula to prove that for all $C^2$-asymptotic minimal surfaces, the renormalized area is well-defined. So, we conjecture that $C^n$-asymptotic regularity is sufficient to guarantee the existence of the renormalized area.

More generally, we shall prove that the relative entropy $E^{(r)}_{rel}[\Sigma',\Sigma]$ is well-defined for $\Sigma'$ in a ``thin" neighborhood of $\Sigma$. This will be treated in Section 3. Here, by a ``thin'' neighborhood, we roughly mean that the volume of the domain bounded by $\Sigma$ and $\Sigma'$ is finite near the ideal boundary. See Definition \ref{63} for the precise statement.

It is also useful to study a weighted analog of the relative entropy for hypersurfaces in a thin neighborhood of $\Sigma$. This allows us to develop a variational theory for $E_{rel}$. In particular, a mountain pass theorem for $E_{rel}$ is proved in \cite{YaoMountainPass}. We will discuss the weighted relative entropy in Section 4.

Finally, we prove a monotonicity formula for the relative entropy of a mean curvature flow.

\begin{thm}
Let $\{\Sigma_t\}_{t\in[0,T)}$ be a mean curvature flow in $\mathbb{H}^{n+1}$, so that each $\Sigma_t$ is trapped between $\Gamma'_-$ and $\Gamma'_+$. Assume that $\Sigma_0$ is $C^2$-asymptotic to a closed submanifold $M\subset\partial_\infty\mathbb{H}^{n+1}$. Then, there exists an $E_0=E_0(\Sigma_0,\Gamma'_-,\Gamma'_+)>0$, so that for any $0\leq t\leq s<T$, we have\label{mono}
\begin{equation*}
    E_{rel}[\Sigma_t,\Sigma]\geq E_{rel}[\Sigma_s,\Sigma]\geq-E_0.
\end{equation*}
\end{thm}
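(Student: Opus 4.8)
The plan is to differentiate the truncated volume $\mathrm{Vol}_\epsilon(\Sigma_\tau;r)$ along the flow, integrate in $\tau$, and let $\epsilon\to0$; the only term that should survive is the manifestly nonnegative energy $\int|\vec H|^2$, provided the boundary flux at the ideal boundary dies in the limit. Here $\Sigma$ denotes the minimal hypersurface asymptotic to $M$ and $\vec H=\vec H_{\mathbb H}$ the hyperbolic mean curvature vector. First I would set up the confinement step: since $\Sigma_0$ is $C^2$-asymptotic to $M$ and each $\Sigma_\tau$ lies between $\Gamma'_-$ and $\Gamma'_+$, the avoidance principle keeps $\Sigma_\tau$ inside the thin neighborhood of $\Sigma$ cut out by $\Gamma'_\pm$ (Definition~\ref{63}), while the interior estimates for mean curvature flow---applied after rescaling by the hyperbolic isometries fixing a point of $M$---show that $\Sigma_\tau$ stays $C^2$-asymptotic (in fact $C^\infty$-asymptotic for $\tau>0$) to $M$, with bounds uniform for $\tau$ in compact subintervals. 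Consequently $E_{rel}[\Sigma_\tau,\Sigma]$ is finite for all $\tau$ by Section~3, the asymptotic expansions of $\Sigma_\tau$ and $\Sigma$ agree up to the order governing the renormalized area, so that $|\vec H(\Sigma_\tau)|_{\mathbb H}$ decays rapidly at $\partial_\infty\mathbb H^{n+1}$ and $\int_{\Sigma_\tau}|\vec H|^2\,d\mathcal H^n_{\mathbb H}<\infty$.

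For a fixed boundary defining function $r$ and a.e.\ small $\epsilon>0$, the first variation formula for mean curvature flow on the fixed domain $\{r\ge\epsilon\}$ reads
\[
\frac{d}{d\tau}\mathrm{Vol}_\epsilon(\Sigma_\tau;r)=-\int_{\Sigma_\tau\cap\{r>\epsilon\}}|\vec H|^2\,d\mathcal H^n_{\mathbb H}+\mathrm{Flux}_\epsilon(\tau),\qquad \mathrm{Flux}_\epsilon(\tau):=\int_{\Sigma_\tau\cap\{r=\epsilon\}}\frac{\langle\nabla^{\mathbb H}r,\vec H\rangle_{\mathbb H}}{|\nabla^{\Sigma_\tau}r|_{\mathbb H}}\,d\mathcal H^{n-1}_{\mathbb H},
\]
the flux term arising because the flow sweeps across the fixed level set $\{r=\epsilon\}$. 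Integrating over $\tau\in[t,s]$,
\[
\mathrm{Vol}_\epsilon(\Sigma_t;r)-\mathrm{Vol}_\epsilon(\Sigma_s;r)=\int_t^s\!\!\int_{\Sigma_\tau\cap\{r>\epsilon\}}|\vec H|^2\,d\mathcal H^n_{\mathbb H}\,d\tau-\int_t^s\mathrm{Flux}_\epsilon(\tau)\,d\tau,
\]
and, subtracting and re-adding $\mathrm{Vol}_\epsilon(\Sigma;r)$, the left side converges as $\epsilon\to0$ to $E_{rel}[\Sigma_t,\Sigma]-E_{rel}[\Sigma_s,\Sigma]$. Taking $r=y$ (permissible since this limit is $r$-independent by Theorem~\ref{35}) and using the conformal formula relating $\vec H_{\mathbb H}$ to the Euclidean mean curvature together with the matched expansions from the confinement step, one finds that the integrand of $\mathrm{Flux}_\epsilon(\tau)$ is $O(\epsilon^{k})$ with $k>n-1$, while $\mathcal H^{n-1}_{\mathbb H}(\Sigma_\tau\cap\{r=\epsilon\})=O(\epsilon^{1-n})$, so $\mathrm{Flux}_\epsilon(\tau)\to0$ as $\epsilon\to0$, uniformly in $\tau\in[t,s]$. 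Hence, by monotone convergence for the energy term,
\[
E_{rel}[\Sigma_t,\Sigma]-E_{rel}[\Sigma_s,\Sigma]=\int_t^s\!\!\int_{\Sigma_\tau}|\vec H|^2\,d\mathcal H^n_{\mathbb H}\,d\tau\ \ge\ 0,
\]
which is the first inequality.

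For the lower bound I would prove the stronger fact that $E_{rel}[\Sigma',\Sigma]\ge-E_0$ for \emph{every} hypersurface $\Sigma'$ trapped between $\Gamma'_-$ and $\Gamma'_+$, with $E_0=E_0(\Gamma'_-,\Gamma'_+)>0$ (enlarging if necessary). Fixing a small $\delta>0$, I split $\mathrm{Vol}_\epsilon(\Sigma';r)-\mathrm{Vol}_\epsilon(\Sigma;r)$ into the part over $\{r\ge\delta\}$, which is bounded below by $-\mathrm{Vol}(\Sigma\cap\{r\ge\delta\})$, and the part over $\{\epsilon\le r<\delta\}$, where $\Sigma'$ is a normal graph of height $u=O(y^{N})$ over $\Sigma$ with $N>n$ and constants controlled by $\Gamma'_\pm$; since $\Sigma$ is minimal, the linear term in the area functional of this graph vanishes and one gets the lower bound $-C\int_{\{0<r<\delta\}}(|u|^2+|\nabla u|^2)\,d\mathcal H^n_{\mathbb H}\ge-C'$, uniformly in $\epsilon$, with $C'$ depending only on $\Sigma$, $\delta$ and the $C^1$-size of $u$. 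Sending $\epsilon\to0$ gives $E_{rel}[\Sigma',\Sigma]\ge-E_0$, in particular $E_{rel}[\Sigma_s,\Sigma]\ge-E_0$, completing the argument.

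The main obstacle is the vanishing of $\mathrm{Flux}_\epsilon(\tau)$ as $\epsilon\to0$, together with the part of the confinement step that supports it: a priori only $\Sigma_0$ is assumed asymptotically regular, so one must show both that the asymptotics of $\Sigma_\tau$ are controlled for $\tau>0$ and that they agree with those of $\Sigma$ to high enough order to make the flux $o(1)$. This is precisely where the hypothesis that the flow stays trapped between $\Gamma'_-$ and $\Gamma'_+$ is indispensable; in the absence of such two-sided barriers in a thin neighborhood the flux need not vanish---it is already only $O(1)$ in the borderline dimension---and the monotonicity would break down.
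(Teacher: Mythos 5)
Your lower-bound half is essentially sound and is in fact the paper's own mechanism: trapping between $\Gamma'_\pm$ forces $\Sigma_s$ into the thin neighborhood, so it is a normal graph over $\Sigma$ of height $O(y^{n+1})$, and a first/second variation argument around the minimal $\Sigma$ (the paper's Theorem \ref{relative entropy} and Lemma \ref{22}) gives $E_{rel}[\Sigma_s,\Sigma]\geq -E_0$ with $E_0$ depending only on $\Sigma,\Gamma'_\pm$. The gap is in the monotonicity half. Your scheme differentiates the sharply truncated volume and needs $\mathrm{Flux}_\epsilon(\tau)\to0$, which requires the normal (vertical) component of $\vec H_{\mathbb H}(\Sigma_\tau)$ to decay faster than $y^{n}$ near the ideal boundary. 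But the regularity actually available from the hypotheses ($\Sigma_0$ only $C^2$-asymptotic, the flow controlled via pseudo-locality and Schauder theory applied to the graph equation (\ref{54})) is only a uniform $C^{1,1}$-asymptotic bound; this gives $|\vec H|_{\mathbb H}=O(y)$ and $\overline{\mathbf{n}}^{(n+1)}_{\Sigma_\tau}=O(y)$, hence a flux of size $O(\epsilon^{3-n})$ — borderline at $n=3$ and divergent for $n\geq4$. The ``matched expansions'' you invoke (that $\Sigma_\tau$ is $C^\infty$-asymptotic for $\tau>0$ and agrees with $\Sigma$ to the order governing the renormalized area) are asserted, not proved: parabolic smoothing is interior and says nothing about regularity at $\partial_\infty\mathbb{H}^{n+1}$, and interpolating the $C^0$ trapping bound $|u_{\Sigma_\tau}-u_\Sigma|\leq Cy^{n+1}$ against mere $C^{1,1}$ control cannot upgrade second-order quantities like $H$ to the needed decay; one would have to develop a boundary expansion for the degenerate parabolic equation analogous to Lemma \ref{45}, which your proposal does not do (and your closing paragraph concedes this is the crux). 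Relatedly, even the a priori finiteness of $\int_{\Sigma_\tau}|\vec H|^2_{\mathbb H}\,d\mathcal H^n_{\mathbb H}$, which your identity presupposes, is unclear for $n\geq3$ under the available $O(y)$ decay.

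The paper closes exactly this hole by never forming a boundary flux on $\Sigma_\tau$ alone. It uses smooth cutoffs $\eta_\epsilon$ with $|y\,\eta_\epsilon'|\leq C$, computes $\frac{d}{dt}E_{rel}[\Sigma_t,\Sigma;\eta_\epsilon f]$ using the minimality of $\Sigma$, and writes all cutoff errors as \emph{relative} quantities $E_{rel}[\Sigma_r,\Sigma;\partial_t(\eta_\epsilon f)-\Delta(\eta_\epsilon f)+\langle\nabla_{\mathbf{n}_{\Sigma_r}}\nabla(\eta_\epsilon f),\mathbf{n}_{\Sigma_r}\rangle]$, whose weights are uniformly bounded in $\mathfrak{X}$ and converge pointwise. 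These are controlled by the weighted entropy estimate (Theorem \ref{weighted entropy}) combined with the key smallness estimate $|E_{rel}[\Sigma_t,\Sigma;\phi_{s_1,s_2,\delta}]|\leq Cs_2$ of Lemma \ref{22}, which only uses the $O(y^{n+1})$ graphical closeness supplied by the trapping — no high-order decay of $\vec H(\Sigma_\tau)$ is ever needed, and the finiteness of the energy term comes out of the resulting identity rather than being assumed. If you want to salvage your route, you must either prove the parabolic boundary expansion you invoke or reorganize the error as a difference against $\Sigma$ as the paper does.
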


\subsection*{Acknowledgement}The author would like to thank his advisor, Jacob Bernstein, who made many useful comments on various stages of the work and suggested the computation that appears in the appendix. The author also thanks Letian Chen for stimulating discussions.

\section{Some Preliminaries}
Let $M$ be an $(n-1)$-dimensional embedded (smooth) submanifold in $\partial_\infty\mathbb{H}^{n+1}$. If we fix any point $p_0\in\mathbb{H}^{n+1}$, then there is an isometry $i:\ \mathbb{H}^{n+1}\to\mathbb{R}^{n+1}_+$, so that $i(p_0)=(\textbf{0},1)$ and $i^*g_P=g_{\mathbb{H}^{n+1}}$. Since all such isometries of $\mathbb{H}^{n+1}$ correspond to the M\"obius transformations on $\partial_\infty\mathbb{H}^{n+1}=\mathbb{S}^{n+1}$, we can assume that $M\subset\mathbb{R}^n\times\{0\}$ after a suitable transformation.

\begin{defn}\label{66}
A complete embedded hypersurface $\Sigma\subset\mathbb{H}^{n+1}$ is called $C^{k,\alpha}$-$asymptotic$ to $M$ if $\Sigma\cup M$ (denoted by $\overbar{\Sigma}$) is a $C^{k,\alpha}$ hypersurface in $\overbar{\mathbb{R}_+^{n+1}}$. $M$ is also denoted by $\partial_\infty \Sigma$.
\end{defn}

For convenience, all the unspecified notations below are understood in $\mathbb{R}^{n+1}_+$ with metric $g_P$. Notations with a bar on the top are understood in $\overbar{\mathbb{R}^{n+1}_+}\cup\{\infty\}=\overbar{\mathbb{H}^{n+1}}$ with metric $r^2g_P$. For example, for a hypersurface $\Sigma$, $g_\Sigma$ (resp. $\overbar{g_\Sigma}$) is the Riemannian metric induced from $g_P$ (resp. $r^2g_P$), and $\text{div}_\Sigma$ (resp. $\overbar{\text{div}}_\Sigma$) is referred to the divergence taken with respect to $g_\Sigma$ (resp. $\overbar{g_\Sigma}$). To avoid misunderstanding, for a set $S\subset\mathbb{R}^{n+1}_+$, $\overbar{S}$ is referred to the topological closure in $\overbar{\mathbb{R}^{n+1}_+}\cup\{\infty\}=\overbar{\mathbb{H}^{n+1}}$, and the closure taken in $\mathbb{R}^{n+1}_+$ with metric $g_P$ is denoted by $\text{cl}(S)$. For any two vectors $\textbf{X}$ and $\textbf{Y}$, we think of them as vectors in Euclidean space. So, the addition and substraction make sense. The vector norm in Euclidean space will be denoted by $|\cdot|$ and that in hyperbolic space will be denoted by $|\cdot|_\mathbb{H}$. The inner product $\left\langle\textbf{X},\textbf{Y}\right\rangle$ is taken with respect to $g_P$, and $\textbf{X}\cdot\textbf{Y}$ is the inner product in Euclidean space.

\begin{defn}\label{63}
Given a hypersurface $\Sigma$ which is $C^2$-asymptotic to $M\subset\mathbb{R}^n\times\{0\}\subset\partial_\infty\mathbb{H}^{n+1}$ and meets $M$ orthogonally, a bounded open set $\Omega\subset\mathbb{R}^{n+1}_+$ is called a \textit{thin neighborhood of $\Sigma$}, if there are constants $C, \epsilon>0$, such that $\Omega\cap\{y=y_0\}\subset\overbar{\mathcal{N}}_{C_1y_0^{n+1}}(\Sigma)$ for $0<y_0<\epsilon$. Here, $\overbar{\mathcal{N}}_\delta(\Sigma)$ is the $\delta$-tubular neighborhood of $\Sigma$ considered in Euclidean topology.
\end{defn}

Now, let us fix a minimal hypersurface $\Sigma$ with $C^2$-asymptotic boundary $M$. Let $\Omega$ be a thin neighborhood of $\Sigma$. For simplicity, We choose hypersurfaces $\Gamma'_\pm$ and open sets $U_{\Gamma'_\pm}\subset\mathbb{R}^{n+1}_+$ satisfying $\partial U_{\Gamma'_\pm}=\Gamma'_\pm$ and $U_{\Gamma'_+}\backslash \text{cl}(U_{\Gamma'_-})=\Omega$. In particular, we can choose $\Gamma'_\pm$ to be $C^2$-asymptotic and orthogonal to $M$. Then, $\Omega$ is \textit{quasi-convex} in the sense that for any two points $p_1$, $p_2$ in $\Omega$, there is a path $\gamma$ connecting them, so that the inequality
\begin{equation*}
    \text{Length}(\gamma)\leq C|\textbf{x}(p_1)-\textbf{x}(p_2)|
\end{equation*}
holds for some constant $C$ independent of the choice of points. Here, the length is measured with respect to Euclidean metric, and $\textbf{x}(p_i)$ is the position vector. We fix this choice of $\Gamma'_\pm$ throughout the article. With $\Gamma'_-$ and $\Gamma'_+$ illustrated above, we define
\begin{equation*}
    \mathcal{C}(\Gamma'_-,\Gamma'_+):=\{U:\, U\text{ is a Caccioppoli set}\footnote{See \cite[Section 1]{GiustiBook} for the precise definition.}\text{ and }U_{\Gamma'_-}\subset U\subset U_{\Gamma'_+}\}.
\end{equation*}

We then discuss the space of admissible weights.

\begin{defn}\label{40}
A function $\psi=\psi(p,\textbf{v})$ over $\text{cl}(\Omega)\times\mathbb{S}^n$ is said to be admissible, if
\begin{itemize}
    \item $\psi$ and $\nabla_{\mathbb{S}^n}\psi$ are locally Lipschitz functions over $\text{cl}(\Omega)\times\mathbb{S}^n$;
    \item $\psi(p,\textbf{v})=\psi(p,-\textbf{v})$;
    \item $\|\psi\|_{\mathfrak{X}}<\infty$,
\end{itemize}
where $\|\psi\|_{\mathfrak{X}}:=\|\psi\|_\infty+\|\nabla_{\mathbb{S}^n}\psi\|_\infty+\|\nabla_{\mathbb{S}^n}\nabla_{\mathbb{S}^n}\psi\|_\infty+\|y\nabla_{\mathbb{R}^{n+1}_+}\psi\|_\infty+\|y\nabla_{\mathbb{R}^{n+1}_+}\nabla_{\mathbb{S}^n}\psi\|_\infty$. The space of all admissible weights is denoted by $\mathfrak{X}$.
\end{defn}

Since $\Omega$ is quasi-convex, the sup-norm of the gradient is equivalent to the Lipschitz constant defined by difference quotients. Since $\psi(p,\textbf{v})=\psi(p,-\textbf{v})$, it can be viewed naturally as a function over the Grassman $n$-plane bundle of $\Omega$. It can also be checked that the space $\mathfrak{X}$ is a Banach space and an algebra.

The main tool used in the discussion of the relative entropy is the divergence theorem applied to an appropriate vector field. Since $\Sigma$ is $C^2$-asymptotic to $M$, we choose a small $\delta>0$, so that the normal projection $\Pi:\ \overbar{\mathcal{N}}_\delta(\overbar{\Sigma})\rightarrow\overbar{\Sigma}$ is $C^1$. We fix a small $\epsilon$, so that $\text{cl}(\Omega)\cap\{y<\epsilon\}$ is contained in $\overbar{\mathcal{N}}_\delta(\Sigma)$. Then,
\begin{equation}\label{39}
    \textbf{X}(q)=\textbf{n}_\Sigma\circ \Pi(q)=y\big(\Pi(q)\big)\overbar{\textbf{n}}_\Sigma\circ\Pi(q)
\end{equation}
is a vector field that is well-defined in $\text{cl}(\Omega)\cap\{y<\epsilon\}$. Let $\overbar{\textbf{X}}=\frac{1}{y}\textbf{X}$. This implies $\overbar{\text{div}}\overbar{\textbf{X}}$ is bounded in $\Omega\cap\{y<\epsilon\}$. Let $X^{(n+1)}=\textbf{X}\cdot\overbar{\textbf{e}}_{n+1}=\overbar{\textbf{e}}_{n+1}\cdot(\textbf{n}_\Sigma\circ \Pi)$ be the ($n+1$)-th component of $\textbf{X}$. Then, the assumption that $\Sigma$ meets $\partial_\infty\mathbb{H}^{n+1}$ orthogonally gives
\begin{align}\label{46}
    |X^{(n+1)}|=y\big(\Pi(q)\big)|\overbar{\textbf{n}}^{(n+1)}_\Sigma\circ\Pi(q)|\leq Cy^2\big(\Pi(q)\big)\leq C'y^2(q).
\end{align}
By a direct computation, we have
\begin{equation}\label{41}
    |\text{div}\textbf{X}|=|\overbar{\text{div}}(y\overbar{\textbf{X}})-\frac{n+1}{y}X^{(n+1)}|\leq y|\overbar{\text{div}}\overbar{\textbf{X}}|+\frac{C}{y}|X^{(n+1)}|\leq Cy.
\end{equation}
Since $\Omega$ is a thin neighborhood,
\begin{equation}\label{42}
    \mathcal{H}^n_{\mathbb{H}^{n+1}}(\Omega\cap\{y=y_1\})=\frac{1}{y_1^n}\mathcal{H}^n(\Omega\cap\{y=y_1\})\leq \frac{1}{y_1^n}\cdot Cy_1^{n+1}=Cy_1.
\end{equation}

We shall also consider a family of cut-off functions. Let $\phi_{t_1,\delta}(t)$ be a smooth cut-off function that is identically $1$ when $t\geq t_1$, and vanishes for $t\leq t_1-\delta$. We can further require that $|\phi'_{t_1,\delta}|\leq \frac{C}{\delta}$, where $C$ is independent of $\delta$ and $t_1$. Let $\phi_{t_1,t_2,\delta}:=\phi_{t_1,\delta}-\phi_{t_2+\delta,\delta}$, for any $t_1<t_2$ with $0<\frac{t_1}{2}<t_1-\delta$ and $t_2+\delta<2t_2<\epsilon$. Fix a boundary defining function $r$. Define $\phi^{(r)}_{t_1,\delta}=\phi_{t_1,\delta}\circ r$ and $\phi^{(r)}_{t_1,t_2,\delta}=\phi_{t_1,t_2,\delta}\circ r$. It is readily checked that $\frac{1}{C}\leq |\overbar{\nabla}r|\leq C$ near $\partial_\infty\mathbb{H}^{n+1}$ for some constant $C>0$. Since $\Sigma$ meets $M$ orthogonally and $\overbar{\nabla}r$ is perpendicular to $\partial_\infty\mathbb{H}^{n+1}$ on $M$, we have $\frac{1}{2C}\leq |\overbar{\nabla}_{\overbar{\Sigma}}r|\leq 2C$ near $M$.

By choosing $\epsilon$ sufficiently small, we can assume that in $\Omega\cap\{r\leq\epsilon\}$, $\frac{y}{C}\leq r(\textbf{x},y)\leq Cy$. Hence, (\ref{46}) and (\ref{41}) imply that $|\text{div}\textbf{X}|\leq Ct_1$ and $|\textbf{X}^{(n+1)}|\leq Ct_1^2$ on $\Omega\cap\{r=t_1\}$ with $t_1\leq \epsilon$. Since $\overbar{\nabla}r$ is perpendicular to $\partial_\infty\mathbb{H}^{n+1}$, we also have $\mathcal{H}^n_{\mathbb{H}}(\Omega\cap\{r=t_1\})\leq Ct_1$ on $\Omega\cap\{r=t_1\}$.

Let $U_\Gamma\in\mathcal{C}(\Gamma'_-,\Gamma'_+)$ with $\Gamma=\partial^*U_\Gamma$ being the reduced boundary of $U_\Gamma$. Consider
\begin{equation*}
    E_{rel}[\Gamma,\Sigma;\phi^{(r)}_{t_1,\delta}]:=\int_\Gamma \phi^{(r)}_{t_1,\delta} d\mathcal{H}^n_{\mathbb{H}^{n+1}}-\int_{\Sigma} \phi^{(r)}_{t_1,\delta} d\mathcal{H}^n_{\mathbb{H}^{n+1}}.
\end{equation*}
Letting $\delta\to0^+$, it follows from the dominated convergence theorem that
\begin{equation*}
    E_{rel}[\Gamma,\Sigma;\chi_{\{r\geq t_1\}}]=\lim_{\delta\to 0^+}E_{rel}[\Gamma,\Sigma;\phi^{(r)}_{t_1,\delta}]=\mathcal{H}^n_{\mathbb{H}^{n+1}}(\Gamma\cap\{r\geq t_1\})- \mathcal{H}^n_{\mathbb{H}^{n+1}}(\Sigma\cap\{r\geq t_1\}).
\end{equation*}

\section{The Existence of Relative Entropy\label{32}}
In this section, we prove Theorem \ref{35}. As a first step, we prove that the relative entropy is well-defined for all the reduced boundaries of Caccioppoli sets in $\mathcal{C}(\Gamma'_-,\Gamma'_+)$.

\begin{thm}
For any $U\in\mathcal{C}(\Gamma'_-,\Gamma'_+)$ and any boundary defining function $r$,
\begin{equation*}
E^{(r)}_{rel}[\partial^*U,\Sigma]=\lim_{\epsilon\to0^+}\big(\mathcal{H}^n_\mathbb{H}(\partial^*U\cap\{r\geq\epsilon\})-\mathcal{H}^n_\mathbb{H}(\Sigma\cap\{r\geq\epsilon\})\big)
\end{equation*}
exists $(\in(-\infty,\infty])$ and $E^{(r)}_{rel}[\partial^*U,\Sigma]\equiv E_{rel}[\partial^*U,\Sigma]$ is independent of $r$. Moreover, $E_{rel}[\partial^*U,\Sigma]\geq-E_0$ for $E_0=E_0(\Sigma,\Omega)$. \label{relative entropy}
\end{thm}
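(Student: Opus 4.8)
The plan is to run the divergence theorem for the (suitably normalized) vector field $\textbf{X}$ of $(\ref{39})$ on the region squeezed between $\Sigma$ and $\partial^*U$, to isolate a single sign‑definite boundary term, and to absorb everything else into the thin‑neighborhood estimates $(\ref{41})$--$(\ref{42})$. Throughout I take $\textbf{X}$ normalized so that $|\textbf{X}|_\mathbb{H}\le 1$ (this preserves $\textbf{X}|_\Sigma=\textbf{n}_\Sigma$ and, since $n\ge 2$, keeps $|\text{div}\,\textbf{X}|\le Cr$ near $\partial_\infty\mathbb{H}^{n+1}$). First the set‑up: after shrinking $\epsilon$, $\Sigma\cap\{r<\epsilon\}$ lies in $\text{cl}(\Omega)$ and separates it, so there is a Caccioppoli set $U_\Sigma$ with $U_{\Gamma'_-}\subset U_\Sigma\subset U_{\Gamma'_+}$ and $\partial^*U_\Sigma\cap\{r<\epsilon\}=\Sigma\cap\{r<\epsilon\}$; orient $\textbf{n}_\Sigma$ as the outer normal of $U_\Sigma$, so $\langle\textbf{X},\textbf{n}_\Sigma\rangle\equiv 1$ on $\Sigma\cap\{r<\epsilon\}$. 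Given $U\in\mathcal{C}(\Gamma'_-,\Gamma'_+)$, put $\Gamma=\partial^*U$, $W^+=U\setminus U_\Sigma$, $W^-=U_\Sigma\setminus U$; since $U_{\Gamma'_-}\subset U,U_\Sigma\subset U_{\Gamma'_+}$ we have $W^\pm\subset\Omega$, so $\textbf{X}$ is $C^1$ near $\text{cl}(W^\pm)\cap\{r<\epsilon\}$. By the structure theory of sets of finite perimeter, $\mathcal{H}^n_\mathbb{H}$‑a.e.\ point of $\partial^*W^+$ lies on $\partial^*U$ with the outer normal $\nu_\Gamma$ of $U$, or on $\Sigma$ with normal $-\textbf{n}_\Sigma$; symmetrically $\partial^*W^-$ lies on $\partial^*U$ with normal $-\nu_\Gamma$, or on $\Sigma$ with normal $\textbf{n}_\Sigma$; and along $\Gamma\cap\Sigma$ one has $\nu_\Gamma=\textbf{n}_\Sigma$ (both $U$ and $U_\Sigma$ contain $U_{\Gamma'_-}$), so such points do not meet $\partial^*W^\pm$. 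Applying Gauss--Green on $W^+$ and on $W^-$ to the Lipschitz field $\phi^{(r)}_{t_1,t_2,\delta}\textbf{X}$ (for $0<t_1<t_2$, $t_2+\delta<\epsilon$), subtracting, and letting $\delta\to 0^+$ by dominated convergence --- as in the $\chi_{\{r\ge t_1\}}$ computation displayed just before the theorem --- yields the master identity
\begin{equation*}
\mathcal{H}^n_\mathbb{H}(\Gamma\cap\{t_1\le r\le t_2\})-\mathcal{H}^n_\mathbb{H}(\Sigma\cap\{t_1\le r\le t_2\})=P(t_1,t_2)+D(t_1,t_2)+\mathcal{F}(t_1,t_2),
\end{equation*}
where $P(t_1,t_2)=\int_{\Gamma\cap\{t_1\le r\le t_2\}}(1-\langle\textbf{X},\nu_\Gamma\rangle)\,d\mathcal{H}^n_\mathbb{H}\ge 0$ (using $|\textbf{X}|_\mathbb{H}\le 1$ on $\Gamma$), $D(t_1,t_2)=\int_{W^+\cap\{t_1\le r\le t_2\}}\text{div}\,\textbf{X}\,d\mathcal{H}^{n+1}_\mathbb{H}-\int_{W^-\cap\{t_1\le r\le t_2\}}\text{div}\,\textbf{X}\,d\mathcal{H}^{n+1}_\mathbb{H}$, and $\mathcal{F}(t_1,t_2)$ gathers the fluxes of $\textbf{X}$ through $\{r=t_1\}$ and $\{r=t_2\}$. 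By $(\ref{42})$ and coarea, $|\mathcal{F}(t_1,t_2)|\le C(t_1+t_2)$; and by $(\ref{41})$--$(\ref{42})$, $\int_{\Omega\cap\{r<\epsilon\}}|\text{div}\,\textbf{X}|\,d\mathcal{H}^{n+1}_\mathbb{H}<\infty$, so $|D(t_1,t_2)|\le\rho(t_2):=\int_{\Omega\cap\{r\le t_2\}}|\text{div}\,\textbf{X}|\,d\mathcal{H}^{n+1}_\mathbb{H}$ with $\rho(t_2)\to 0$.

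Set $g(t):=\mathcal{H}^n_\mathbb{H}(\Gamma\cap\{r\ge t\})-\mathcal{H}^n_\mathbb{H}(\Sigma\cap\{r\ge t\})$, finite for each $t>0$. The master identity gives $g(t_1)=g(t_2)+P(t_1,t_2)+D(t_1,t_2)+\mathcal{F}(t_1,t_2)+O(t_2)$, the $O(t_2)$ --- bounded by $Ct_2$ uniformly in $U$ by $(\ref{42})$ --- accounting for the level set $\{r=t_2\}$. Fix $t_2=\tau$ and let $t_1\downarrow 0$: $P(t_1,\tau)$ increases, so by monotone convergence it tends to $\int_{\Gamma\cap\{0<r\le\tau\}}(1-\langle\textbf{X},\nu_\Gamma\rangle)\,d\mathcal{H}^n_\mathbb{H}\in[0,+\infty]$; $D(t_1,\tau)$ tends to a finite limit by dominated convergence; $\mathcal{F}(t_1,\tau)$ stays $\le C\tau$. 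Hence $\lim_{t_1\to 0^+}g(t_1)$ exists in $(-\infty,+\infty]$ --- this is $E^{(r)}_{rel}[\partial^*U,\Sigma]$ (with $t_1$ playing the role of the theorem's $\epsilon$) --- and equals $+\infty$ exactly when $\int_{\Gamma\cap\{0<r\le\tau\}}(1-\langle\textbf{X},\nu_\Gamma\rangle)\,d\mathcal{H}^n_\mathbb{H}=+\infty$. For the lower bound, $E^{(r)}_{rel}[\partial^*U,\Sigma]=g(\tau)+P(0^+,\tau)+D(0^+,\tau)+\mathcal{F}(0^+,\tau)+O(\tau)\ge g(\tau)-\rho(\tau)-2C\tau\ge-\mathcal{H}^n_\mathbb{H}(\Sigma\cap\{r\ge\tau\})-\rho(\tau)-2C\tau=:-E_0$, and since $\tau,\rho,C,\epsilon,\Gamma'_\pm,\textbf{X}$ depend only on $\Sigma$ and $\Omega$, so does $E_0$.

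For independence of $r$: if $\tilde r$ is another boundary defining function, then $\tilde r/r$ lies between two positive constants near $\partial_\infty\mathbb{H}^{n+1}$, so for a suitable $\Lambda>1$ the symmetric difference $\{r\ge t\}\triangle\{\tilde r\ge t\}$ is contained in the shell $S_t:=\{t/\Lambda\le r\le\Lambda t\}$. If $E^{(r)}_{rel}[\partial^*U,\Sigma]<\infty$, then by the previous step $\int_{\Gamma\cap\{0<r\le\tau\}}(1-\langle\textbf{X},\nu_\Gamma\rangle)<\infty$, so its tail over $S_t$ vanishes as $t\to 0$; running the master identity on $S_t$ and on its two pieces $\{r\ge t,\tilde r<t\}$, $\{r<t,\tilde r\ge t\}$ (all cut out by level sets of $r$ and $\tilde r$, with fluxes $\le Ct$), each $\mathcal{H}^n_\mathbb{H}(\Gamma\cap\cdot)-\mathcal{H}^n_\mathbb{H}(\Sigma\cap\cdot)$ there tends to $0$; hence $g_r(t)-g_{\tilde r}(t)\to 0$ and $E^{(r)}_{rel}=E^{(\tilde r)}_{rel}$. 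If instead $E^{(r)}_{rel}=+\infty$, then $\{0<\tilde r\le\tau/\Lambda\}\subset\{0<r\le\tau\}$ forces the analogous integral for $\tilde r$ to diverge too, so $E^{(\tilde r)}_{rel}=+\infty$; either way $E^{(r)}_{rel}=E^{(\tilde r)}_{rel}=:E_{rel}[\partial^*U,\Sigma]$.

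I expect the main obstacle to be the measure‑theoretic bookkeeping that turns the two Gauss--Green identities into the clean master identity with its one nonnegative term $P$ --- pinning down $\partial^*W^\pm$ and the normals along $\Gamma\cap\Sigma$ --- together with checking that $\textbf{X}$ can be normalized to unit length without destroying $(\ref{41})$. A second, more conceptual point is that $\mathcal{H}^n_\mathbb{H}(\Sigma\cap S_t)$ does \emph{not} tend to $0$ as $t\to 0$ (it diverges), so the cancellation on a thin shell driving the conformal invariance cannot come from crude area bounds but only from the calibration property $\langle\textbf{X},\textbf{n}_\Sigma\rangle\equiv 1$ on $\Sigma$. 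Finally, because $\textbf{X}$ lives only near $\partial_\infty\mathbb{H}^{n+1}$, one is forced to work with the slabs $\{t_1\le r\le t_2\}$ instead of $\{r\ge t_1\}$, carrying the finite, $t_1$‑independent constant $g(t_2)$; that the $O(t_2)$ corrections are uniform in $U$ is precisely what makes $E_0$ depend only on $\Sigma$ and $\Omega$.
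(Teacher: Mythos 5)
Your proof is correct, and for existence and the lower bound it is essentially the paper's own argument: both rest on the calibration property $\left\langle\textbf{X},\textbf{n}_\Sigma\right\rangle=1$ on $\Sigma$, the divergence theorem applied to $\phi^{(r)}_{t_1,t_2,\delta}\textbf{X}$ over $U\setminus U_\Sigma$ and $U_\Sigma\setminus U$, and the thin-neighborhood bounds (\ref{46})--(\ref{42}), producing the near-monotonicity in the cutoff scale, i.e.\ exactly (\ref{67}), from which existence of the limit in $(-\infty,\infty]$ and $E_0=\mathcal{H}^n_\mathbb{H}(\Sigma\cap\{r\geq t_2\})+Ct_2$ follow as in your last display of that step. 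Your two refinements are sound and even slightly sharpen the step corresponding to (\ref{51}): normalizing $\textbf{X}$ to unit hyperbolic length (the divergence bound $|\text{div}\,\textbf{X}|\leq Cr$ survives, by the same computation as (\ref{41})) makes the inequality $\left\langle\textbf{X},\textbf{n}_{\partial^*U}\right\rangle\leq1$ literal rather than only up to a $|\textbf{X}|_\mathbb{H}=1+\mathcal{O}(y^n)$ error, and recording an exact identity with the nonnegative defect $P=\int_{\Gamma}(1-\left\langle\textbf{X},\nu_\Gamma\right\rangle)\,d\mathcal{H}^n_\mathbb{H}$ gives you, as a byproduct, the characterization that $E^{(r)}_{rel}[\partial^*U,\Sigma]<\infty$ iff this defect is integrable near $M$. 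Where you genuinely diverge is the independence of $r$: the paper runs the same divergence-theorem estimate once more with the single mixed cutoff $\psi_i^\delta=\phi^{(\widetilde r)}_{s_i,\delta}-\phi^{(r)}_{t_i+\delta,\delta}$ (nonnegative since $\{r\geq t_i\}\subset\{\widetilde r\geq s_i\}$), obtaining $E^{(\widetilde r)}_{rel}\geq E^{(r)}_{rel}$ directly and concluding by symmetry, with no finite/infinite dichotomy; you instead deduce integrability of the defect, show the area differences over the symmetric-difference pieces of the shell $S_t$ vanish, and handle $+\infty$ separately. Your route needs more bookkeeping (Gauss--Green on regions cut by level sets of both $r$ and $\widetilde r$, valid for a.e.\ $t$) but yields the quantitative statement $g_r(t)-g_{\widetilde r}(t)\to0$; the paper's is leaner. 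Two small repairs, neither fatal: your parenthetical claim that along $\Gamma\cap\Sigma$ one must have $\nu_\Gamma=\textbf{n}_\Sigma$, so such points miss $\partial^*W^\pm$, is not implied by $U_{\Gamma'_-}\subset U, U_\Sigma$ and is false in general --- but harmlessly so, since points with $\nu_\Gamma=-\textbf{n}_\Sigma$ lie in both $\partial^*W^+$ and $\partial^*W^-$, contribute $2$ to the integrand of $P$ and $-2$ to the combined surface term, and the master identity still balances; and in the infinite case you need the containment $\{0<r\leq\tau\}\subset\{0<\widetilde r\leq\Lambda\tau\}$ (not $\{0<\widetilde r\leq\tau/\Lambda\}\subset\{0<r\leq\tau\}$) to transfer divergence of the defect integral, which the two-sided comparability of $r$ and $\widetilde r$ of course supplies.
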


\begin{proof}
Since $\overbar{\Sigma}$ is an embedded hypersurface in $\overbar{\mathbb{R}^{n+1}_+}$, it is orientable. Thus, we can choose an open set $U_\Sigma\subset\mathbb{R}^{n+1}_+$ satisfying $\Sigma=\text{cl}(U_\Sigma)\backslash U_\Sigma$ and $U_{\Gamma'_-}\subset U_\Sigma\subset U_{\Gamma'_+}$.

Fix $t_1$, $t_2$ and $\delta$ with $0<\frac{t_1}{2}<t_1-\delta<t_1<t_2<t_2+\delta<2t_2<\epsilon$. Combining the computation in Section 2 and applying the divergence theorem to the vector field $\textbf{X}$, we get
\begin{align}
    &\int_{\partial^*U}\phi^{(r)}_{t_1,t_2,\delta}d\mathcal{H}^n_{\mathbb{H}}-\int_{\Sigma}\phi^{(r)}_{t_1,t_2,\delta}d\mathcal{H}^n_{\mathbb{H}}\geq \int_{\partial^*U}\left\langle\phi^{(r)}_{t_1,t_2,\delta}\textbf{X},\textbf{n}_{\partial^*U}\right\rangle d\mathcal{H}^n_{\mathbb{H}}-\int_{\Sigma}\left\langle\phi^{(r)}_{t_1,t_2,\delta}\textbf{X},\textbf{n}_{\Sigma}\right\rangle d\mathcal{H}^n_{\mathbb{H}}\label{51}\\
    &=\int_{U\backslash U_{\Sigma}}\text{div}\left(\phi^{(r)}_{t_1,t_2,\delta}\textbf{X}\right)d\mathcal{H}^{n+1}_{\mathbb{H}}-\int_{U_{\Sigma}\backslash U}\text{div}\left(\phi^{(r)}_{t_1,t_2,\delta}\textbf{X}\right)\mathcal{H}^{n+1}_{\mathbb{H}}\\
    &\geq -2\int_{\Omega}\left(\phi^{(r)}_{t_1,t_2,\delta}|\text{div}\textbf{X}|+|\left\langle \nabla\phi^{(r)}_{t_1,t_2,\delta}, \textbf{X}\right\rangle|\right)d\mathcal{H}^{n+1}_{\mathbb{H}}\\
    &\geq -C\int_{\Omega\cap\{t_1-\delta<r<t_2+\delta\}}rd\mathcal{H}^{n+1}_{\mathbb{H}}-C\int_{\Omega\cap\left(\{t_1-\delta<r<t_1\}\cup\{t_2<t<t_2+\delta\}\right)}\frac{1}{\delta}|\overbar{\nabla}r\cdot\textbf{X}|d\mathcal{H}^{n+1}_{\mathbb{H}}\label{57}
\end{align}
Since $\overbar{\nabla}r$ is perpendicular to $\mathbb{R}^n\times\{0\}$, we have $|\overbar{\nabla}r\cdot\textbf{X}|\leq C|\textbf{X}^{(n+1)}|\leq Cr^2$. Using the co-area formula for (\ref{57}), we get
\begin{equation}\label{61}
    \int_{\Omega\cap\{t_1-\delta<r<t_2+\delta\}}rd\mathcal{H}^{n+1}_{\mathbb{H}}=\int_{t_1-\delta}^{t_2+\delta}dt\int_{\Omega\cap\{r=t\}}\frac{r}{y^{n+1}|\overbar{\nabla} r|}d\mathcal{H}^n\leq Ct^2_2,
\end{equation}
and similarly,
\begin{equation}\label{62}
    \int_{\Omega\cap\left(\{t_1-\delta<r<t_1\}\cup\{t_2<t<t_2+\delta\}\right)}\frac{1}{\delta}|\overbar{\nabla}r\cdot\textbf{X}|d\mathcal{H}^{n+1}_{\mathbb{H}}\leq Ct^2_2.
\end{equation}
Hence, (\ref{57})$\geq -Ct_2$. After rewriting (\ref{51}), we see that
\begin{equation*}
    E_{rel}[\partial^*U,\Sigma;\phi^{(r)}_{t_1,\delta}]\geq E_{rel}[\partial^*U,\Sigma;\phi^{(r)}_{t_2+\delta,\delta}]-Ct_2.
\end{equation*}
Since $U$ has locally finite perimeter, $\partial^*U$ has finite $\mathcal{H}^n$ measure on each compact set in $\mathbb{R}^{n+1}_+$. Letting $\delta\rightarrow0^+$, the dominated convergence theorem gives
\begin{equation}\label{67}
    E_{rel}[\partial^*U,\Sigma;\chi_{\{r\geq t_1\}}]\geq E_{rel}[\partial^*U,\Sigma;\chi_{\{r\geq t_2\}}]-Ct_2,
\end{equation}
and this implies $\liminf_{t_1\rightarrow 0^+}E_{rel}[\partial^*U,\Sigma;\chi_{\{r\geq t_1\}}]\geq \limsup_{t_2\rightarrow 0^+}E_{rel}[\partial^*U,\Sigma;\chi_{\{r\geq t_2\}}]$. In other words, $E^{(r)}_{rel}[\partial^*U,\Sigma]$ is well-defined.

Fix $t_2<\epsilon$ and let $t_1\to0^+$. We get
\begin{equation*}
    \limsup_{t_1\rightarrow 0^+}E_{rel}[\partial^*U,\Sigma;\chi_{\{r\geq t_1\}}]\geq E_{rel}[\partial^*U,\Sigma;\chi_{\{r\geq t_2\}}]-Ct_2>-\infty.
\end{equation*}
That is, $E^{(r)}_{rel}[\partial^*U,\Sigma]>-\infty$. Notice that
\begin{equation*}
    E_{rel}[\partial^*U,\Sigma;\chi_{\{r\geq t_2\}}]-Ct_2\geq-\mathcal{H}^n_\mathbb{H}(\Sigma\cap\{r\geq t_2\})-Ct_2.
\end{equation*}
We can take $E_0=\mathcal{H}^n_\mathbb{H}(\Sigma\cap\{r\geq t_2\})+Ct_2$.

Finally, given another boundary defining function $\widetilde{r}$, we show $E^{(r)}_{rel}[\partial^*U,\Sigma]=E^{(\widetilde{r})}_{rel}[\partial^*U,\Sigma]$. We have shown that both $E^{(r)}_{rel}[\partial^*U,\Sigma]$ and $E^{(\widetilde{r})}_{rel}[\partial^*U,\Sigma]$ are well-defined. We pick a sequence $\{t_i\}_{i=1}^\infty$ converging to $0$ and appropriately choose another sequence $\{s_i\}_{i=1}^\infty$ satisfying $s_i\to0$ and $\Omega\cap\{r\geq t_i\}\subset\Omega\cap\{\widetilde{r}\geq s_i\}$. For $i$ sufficiently large, we choose $\delta$ sufficiently small, so that $\psi_i^\delta=\phi^{(\widetilde{r})}_{s_i,\delta}-\phi^{(r)}_{t_i+\delta,\delta}$ is well-defined. Then, using a computation similar to (\ref{51})-(\ref{62}),
\begin{align*}
    &\int_{\partial^*U}\psi_i^\delta d\mathcal{H}^n_{\mathbb{H}}-\int_{\Sigma}\psi_i^\delta d\mathcal{H}^n_{\mathbb{H}}\geq-2\int_{\Omega}\left(\psi_i^\delta|\text{div}\textbf{X}|+|\left\langle \nabla\psi_i^\delta, \textbf{X}\right\rangle|\right)d\mathcal{H}^{n+1}_{\mathbb{H}}\\
    \geq&-C\mathcal{H}^{n+1}_\mathbb{H}(\Omega\cap\{r<t_i+\delta\})-\frac{C}{\delta}\mathcal{H}^{n+1}_\mathbb{H}(\Omega\cap\{t_i<r<t_i+\delta\})-\frac{C}{\delta}\mathcal{H}^{n+1}_\mathbb{H}(\Omega\cap\{s_i-\delta<\widetilde{r}<s_i\})\\
    \geq&-C(t_i+s_i).
\end{align*}
This implies $E_{rel}[\partial^*U,\Sigma;\phi^{(\widetilde{r})}_{s_i,\delta}]\geq E_{rel}[\partial^*U,\Sigma;\phi^{(r)}_{t_i+\delta,\delta}]-C(t_i+s_i)$. Letting $\delta\to0$,
\begin{equation*}
    E_{rel}[\partial^*U,\Sigma;\chi_{\{\widetilde{r}\geq s_i\}}]\geq E_{rel}[\partial^*U,\Sigma;\chi_{\{r\geq t_i\}}]-C(t_i+s_i).
\end{equation*}
Letting $i\to\infty$, we set $E^{(\widetilde{r})}_{rel}[\partial^*U,\Sigma]\geq E^{(r)}_{rel}[\partial^*U,\Sigma]$. Interchanging $r$ and $\widetilde{r}$ gives the reverse inequality. Since $r$ and $\widetilde{r}$ are arbitrary, we have $E^{(r)}_{rel}[\partial^*U,\Sigma]\equiv E_{rel}[\partial^*U,\Sigma]$ for any boundary defining function $r$.
\end{proof}

\begin{rem}\label{47}
We see from the proof of Theorem \ref{relative entropy} that $E_{rel}[\partial^*U,\Sigma]$ is still well-defined, even if we replace the condition $\Omega\cap\{y=y_0\}\subset\overbar{\mathcal{N}}_{C_1y_0^{n+1}}(\Sigma)$ by $\Omega\cap\{y=y_0\}\subset\overbar{\mathcal{N}}_{C_1y_0^{n}}(\Sigma)$. However, the thin neighborhood allows us to define the weighted relative entropy for a broader class of weights.
\end{rem}

Let $M$ be an ($n-1$)-dimensional $C^m$ closed submanifold in $\mathbb{R}^{n}\times\{0\}$ with $1\leq m\leq n$. After a suitable transformation, we may assume that $\textbf{0}\in M$ and that the normal vector of $M$ at $\textbf{0}$ in $\mathbb{R}^n\times\{0\}$ is equal to $\overbar{\textbf{e}_n}=(0,\cdots,0,1,0)$. Let $\Sigma$ be a minimal hypersurface with $C^m$-asymptotic boundary $M$. Since $\Sigma$ meets $\mathbb{R}^n\times\{0\}$ orthogonally, we see that for a small $\delta>0$, there is a function $u=u(\textbf{x}',y)$ in $C^{m}\Big((-\delta,\delta)^{n-1}\times[0,\delta)\Big)$, where $\textbf{x}'$ is the first $(n-1)$ coordinates and $y$ is the $(n+1)$-th coordinate, so that
\begin{align}\label{44}
    \Sigma\cap \Big((-\delta,\delta)^{n}\times[0,\delta)\Big)=\{(\textbf{x}',x_n,y):\ x_n=u(\textbf{x}',y),\ (\textbf{x}',y)\in (-\delta,\delta)^{n-1}\times[0,\delta)\}.
\end{align}
The orthogonality condition implies $\partial_y u(\textbf{x}',0)=\textbf{0}$. Since $\Sigma$ is minimal in $\mathbb{H}^{n+1}$, $u$ satisfies the equation
\begin{align}\label{43}
    \partial_{yy}u+\sum_{i=1}^{n-1}\partial_{ii}u-\sum_{j,k\in\{1,\cdots,n-1,y\}}\frac{\partial_{j}u\,\partial_{k}u\,\partial_{jk}u}{1+(\partial_yu)^2+\sum_{i=1}^{n-1}(\partial_i u)^2}-\frac{n}{y}\partial_y u=0.
\end{align}

\begin{lem}\label{45}
Assume $M$ is $C^{m}$ with $1\leq m\leq n$ and $\Sigma$ is a minimal hypersurface that is $C^{m}$-asymptotic to $M$. If we write $\Sigma$ locally as a graph as in $(\ref{44})$, then $\partial^{(i)}_yu(\mathbf{x}',0)$ is determined by $M$ for $0\leq i\leq k$.
\end{lem}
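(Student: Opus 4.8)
The plan is an induction on $i$, reading $\partial_y^{(i)}u(\textbf{x}',0)$ off the minimality equation (\ref{43}) one order at a time. Set $a_i(\textbf{x}'):=\partial_y^{(i)}u(\textbf{x}',0)$. Since $u\in C^m$, we may write
\[
u(\textbf{x}',y)=\sum_{i=0}^{m}\frac{a_i(\textbf{x}')}{i!}\,y^i+\rho(\textbf{x}',y),\qquad \partial_y^{(a)}\rho(\textbf{x}',0)=0 \ \text{ for } 0\le a\le m.
\]
The two lowest coefficients require no minimality: because $\overbar{\Sigma}\cap\{y=0\}=M$ near $\textbf{0}$ and $\Sigma=\{x_n=u(\textbf{x}',y)\}$, the function $a_0$ is exactly the graph function of $M$ over $\textbf{x}'$, so $a_0$ is determined by (indeed equivalent to) $M$; and the orthogonality hypothesis gives $a_1=\partial_yu(\textbf{x}',0)=0$. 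If $m=1$ we are done, so assume $m\ge2$.

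The heart of the argument is that (\ref{43}), expanded in powers of $y$, is triangular with an explicit, nonvanishing diagonal in the relevant range. First, the singular term combines with $\partial_{yy}u$ as
\[
\partial_{yy}u-\frac{n}{y}\partial_yu=\sum_{-1\le \ell\le m-2}\frac{\ell+1-n}{(\ell+1)!}\,a_{\ell+2}\,y^\ell+o(y^{m-2}),
\]
whose $\ell=-1$ term is $-n\,a_1=0$; this is simultaneously the reason orthogonality is forced and the reason $\partial_yu/y$ extends to a $C^{m-2}$ function, making the expansion valid through order $y^{m-2}$. Each remaining term of (\ref{43}) — the tangential Laplacian $\sum_{j=1}^{n-1}\partial_{jj}u$ and the nonlinear quotient, whose denominator $1+(\partial_yu)^2+\sum_i(\partial_iu)^2$ is $\ge 1$ hence smooth — has a genuine $y$-Taylor expansion through order $m-2$, the $\rho$-contributions being $o(y^{m-2})$ (here one uses $\partial_y^{(a)}\rho(\textbf{x}',0)=0$ for $a\le m$, and again $a_1=0$ to bound $\tfrac ny\partial_y\rho$). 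A weight count shows that the coefficient of $y^\ell$ in these remaining terms involves only $a_0,\dots,a_\ell$ and their spatial derivatives of order $\le2$: in the nonlinear quotient each factor $\partial_yu$ or $\partial_{jy}u$ vanishes to first order in $y$ (since $a_1=0$), and $\partial_{yy}u$ occurs only multiplied by $(\partial_yu)^2$, which vanishes to second order, so no derivative $\partial_y^{(\ell')}u$ with $\ell'\ge\ell+1$ survives at order $y^\ell$.

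Setting the coefficient of $y^\ell$ in (\ref{43}) to zero therefore gives, for $0\le\ell\le m-2$,
\[
\frac{\ell+1-n}{(\ell+1)!}\,a_{\ell+2}=R_\ell\big(a_0,\dots,a_\ell\big),
\]
with $R_\ell$ a universal expression in $a_0,\dots,a_\ell$ and their spatial derivatives of order $\le2$. Putting $i=\ell+2\in\{2,\dots,m\}$, the hypothesis $m\le n$ forces $\ell+1-n=i-1-n\le-1\ne0$, so $a_i$ is determined by $a_0,\dots,a_{i-2}$. By strong induction — with base data $a_0$ (determined by $M$) and $a_1=0$ — each $a_i$ with $0\le i\le m$ is a universal differential expression in $a_0$ of bounded order (at most $m$, since $i\le m$), hence determined by $M$; in particular two minimal hypersurfaces $C^m$-asymptotic to the same $M$ have the same $\partial_y^{(i)}u(\textbf{x}',0)$ for $0\le i\le m$.

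I expect the main obstacle to be exactly the weight count: one must check carefully that the $y^\ell$-coefficient of the nonlinear quotient never resurrects $a_{\ell+1}$ or $a_{\ell+2}$ — this, together with the nonvanishing of $\ell+1-n$, is what closes the recursion. The conceptual core, that the indicial factor $i-1-n$ vanishes only at $i=n+1$ (which is precisely why the natural range is $i\le n$, i.e.\ $m\le n$), is immediate once the expansion is organized as above, and is consistent with the logarithmic term $c_{n-1}\log\tfrac1\epsilon$ in (\ref{37}): at order $n+1$ the formal scheme genuinely breaks down. The remaining point — replacing smoothness by $C^m$ regularity — is routine, handled by the splitting $u=(\text{degree-}m\text{ Taylor polynomial in }y)+\rho$ used above.
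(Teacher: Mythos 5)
Your proof is correct and is essentially the paper's argument: both extract from the Fuchsian structure of (\ref{43}) the triangular recursion $(\ell+1-n)\,a_{\ell+2}=(\ell+1)!\,R_\ell(a_0,\dots,a_\ell)$ for $a_i=\partial^{(i)}_yu(\textbf{x}',0)$ and close it by induction using $a_1=0$ and $\ell+2\le m\le n$, the paper obtaining the same relation by multiplying (\ref{43}) by $y$, taking $y$-derivatives and evaluating at $y=0$, which is exactly your coefficient matching written in Leibniz form. One cosmetic caution: since $a_{m-1}$ and $a_m$ need not admit spatial derivatives under mere $C^m$ regularity, it is cleaner to extract the $y^\ell$-coefficients by differentiating each term of (\ref{43}) at $y=0$ (as the paper does) rather than literally substituting the splitting $u=P_m+\rho$ into the equation; your weight count, which ensures only $a_0,\dots,a_\ell$ with $\ell\le m-2$ ever get spatially differentiated, then goes through verbatim.
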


\begin{proof}
Assume $m\geq2$, otherwise there is nothing to prove. Fix $1\leq k\leq m-1$. Assume that $\partial^{(i)}_y u(\textbf{x}',0)$ is determined by the boundary data, $u(\textbf{x}',0)$, for $1\leq i\leq k$, which is true for $k=1$. We prove by induction that the $(k+1)$-th $y$-derivative at $y=0$ is also determined by the boundary data. Multiply both sides of (\ref{43}) by $y$ and take the $(k-1)$-th $y$-derivative on both sides. We get
\begin{align*}
    y\partial^{(k+1)}_y u(\textbf{x}',y)+(k-1-n)\partial^{(k)}_y u(\textbf{x}'y)+\frac{y\partial_{y}u\,\partial_{y}u\,\partial^{(k+1)}_{y}u}{1+(\partial_yu)^2+\sum_{i=1}^{n-1}(\partial_i u)^2}+f+g\partial^{(k)}_y u=0,
\end{align*}
where $f$ and $g$ are rational functions of $\partial^{\alpha_1}_{x_1}\cdots\partial^{\alpha_{n-1}}_{x_{n-1}}\partial^{(i)}_y u$ ($1\leq i\leq k-1$) with $g\big|_{y=0}=0$. As a result,
\begin{align*}
    (k-n)\partial^{(k+1)}_y u(\textbf{x}',0)+\partial_yf+\partial_yg\,\partial^{(k)}_y u=0,\text{ for }\textbf{x}'\in(-\delta,\delta)^{n-1}.
\end{align*}
Hence, by induction, $\partial^{(i)}_y u(\textbf{x}',0)$ is determined by the boundary data when $1\leq i\leq m$.
\end{proof}

Let $\Gamma_1$ and $\Gamma_2$ be two minimal hypersurfaces that are both $C^{3}$-asymptotic to $M\subset\mathbb{R}^n\times\{0\}$. In the next lemma, we show that $\Gamma_2$ is in a thin neighborhood of $\Gamma_1$.

\begin{lem}\label{30}
Suppose $\Gamma_1$, $\Gamma_2$ are two minimal hypersurfaces in $\mathbb{H}^{n+1}$ that are both $C^3$-asymptotic to $M\subset\mathbb{R}^{n}\times\{0\}\subset\partial_\infty\mathbb{H}^{n+1}$. Then, there exist an $\epsilon_1>0$ and a function $h$, so that
\begin{equation}\label{78}
    \Gamma_2\cap\{y\leq\frac{\epsilon_1}{2}\}\subset\big\{p+h(p)\overbar{\mathbf{n}}_{\Gamma_1}:p\in\Gamma_1\cap\{y\leq\epsilon_1\}\big\}\subset\Gamma_2\cap\{y\leq2\epsilon_1\},
\end{equation}
where $h$ satisfies the estimate $|h(p)|\leq Cy^{n+1}(p)$ for $C>0$ and $p\in\Gamma_1\cap\{y\leq\epsilon_1\}$.
\end{lem}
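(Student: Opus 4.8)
The plan is to reduce the statement to a graphical comparison near $M$ and then use the structure of the minimal surface equation to control the difference of the two graphs. First I would work in the coordinates introduced just before Lemma \ref{45}: near a point $\mathbf{0}\in M$ with $M$-normal $\overbar{\mathbf{e}_n}$, write $\Gamma_1$ as $x_n=u_1(\mathbf{x}',y)$ and $\Gamma_2$ as $x_n=u_2(\mathbf{x}',y)$ with $u_1,u_2\in C^3$ on a box $(-\delta,\delta)^{n-1}\times[0,\delta)$, each solving \eqref{43}, and each satisfying $\partial_y u_i(\mathbf{x}',0)=\mathbf{0}$ by orthogonality. Since both $\Gamma_1$ and $\Gamma_2$ are asymptotic to the \emph{same} $M$, we have $u_1(\mathbf{x}',0)=u_2(\mathbf{x}',0)$. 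By Lemma \ref{45} (applied with $m=3$), the traces $\partial_y^{(i)}u_j(\mathbf{x}',0)$ for $i=0,1,2$ are determined by the boundary data $u_j(\mathbf{x}',0)$ alone; hence $\partial_y^{(i)}u_1(\mathbf{x}',0)=\partial_y^{(i)}u_2(\mathbf{x}',0)$ for $i=0,1,2$. Taylor expanding $w:=u_2-u_1$ in $y$ at $y=0$ with $C^3$ remainder then gives $|w(\mathbf{x}',y)|\le C y^{3}$ together with $|\partial_{\mathbf{x}'}w|,|\partial_y w|\le Cy^2$ and $|\partial^2 w|\le Cy$ on a smaller box. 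This already yields the Euclidean estimate $|h(p)|\le C y^{3}$, which is the claimed $y^{n+1}$ bound in the case $n=2$; for general $n$ one needs the sharper statement and I return to this point below.

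Second I would convert the graph comparison into the normal-graph statement \eqref{78}. Using $w$ and the normal field $\overbar{\mathbf{n}}_{\Gamma_1}$ (which is $C^2$ up to $M$ since $\overbar{\Gamma_1}$ is $C^3$), the map $p\mapsto p+h(p)\overbar{\mathbf{n}}_{\Gamma_1}$ where $h(p)$ is the signed Euclidean distance from $p\in\Gamma_1$ to $\Gamma_2$ along $\overbar{\mathbf{n}}_{\Gamma_1}$ is well-defined once $|w|$ is small compared to the normal-injectivity radius; the estimate on $h$ follows from $|h(p)|\le C|w|$ on the overlap, and the fact that this exponential-type map takes $\Gamma_1\cap\{y\le\epsilon_1\}$ onto a set sandwiched between $\Gamma_2\cap\{y\le \tfrac{\epsilon_1}{2}\}$ (from below) and $\Gamma_2\cap\{y\le 2\epsilon_1\}$ (from above) is a consequence of the $y$-shift being of order $y^2\ll y$, so the map moves the $y$-coordinate by a lower-order amount; I would make this precise by a continuity/degree argument on each slice $\{y=y_0\}$, using that near $M$ both surfaces are graphs over the same domain in $\partial_\infty\mathbb{H}^{n+1}$. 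Finally I would patch the local statement into a global one by a finite cover of the compact $M$ and a partition of unity, taking $\epsilon_1$ to be the minimum of the local radii.

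The main obstacle is getting the exponent $n+1$ rather than just $3$ in the bound $|h(p)|\le Cy^{n+1}(p)$, since naive $C^3$ regularity only gives $y^3$. The point is that $\Gamma_1$ and $\Gamma_2$ are \emph{minimal}, so $w=u_2-u_1$ satisfies a linear second-order elliptic equation obtained by subtracting the two copies of \eqref{43} (linearizing the nonlinear terms along the segment between $u_1$ and $u_2$); the linearized operator has the schematic form $\partial_{yy}w+\Delta_{\mathbf{x}'}w-\tfrac{n}{y}\partial_y w+(\text{lower order})\cdot w=0$, whose indicial roots in $y$ at the boundary are $0$ and $n+1$. Since the boundary data of $w$ and of $\partial_y w$ vanish (indeed all derivatives up to order $n$ are forced, by iterating Lemma \ref{45} with $m$ as large as the regularity allows — here one wants to invoke the conjecture-free case: with $C^3$ data one gets $i\le 2$, but the minimal surface equation's indicial structure lets one push $w$ itself to decay like $y^{n+1}$ because the $y^1,\dots,y^{n}$ coefficients in the formal expansion of $w$ are killed by the recursion $(k-n)\partial_y^{(k+1)}w(\mathbf{x}',0)+(\dots)=0$ whenever the lower traces vanish, until the resonance at $k=n$), the unique solution with this vanishing data and bounded growth must be $O(y^{n+1})$. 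Concretely I would either run the formal recursion of Lemma \ref{45} for $w$ directly — observing that the inhomogeneous terms $f,g$ there are quadratic in the unknowns and hence, when evaluated on the difference, vanish to the combined order — or invoke a barrier/maximum-principle argument with comparison function $Ay^{n+1}$ on $\Omega\cap\{y<\epsilon_1\}$. The recursion argument is cleaner and parallels the proof of Lemma \ref{45} almost verbatim, so that is the route I would take, with the maximum principle as a fallback to handle the borderline resonance term at order $y^{n+1}$ (which may carry a $\log$-type coefficient but is still $o(y^{n})$, hence harmless for the stated estimate once one allows the constant $C$ to absorb it — or, more carefully, one notes the resonant coefficient is again determined by and equal for the two surfaces, so $w$ is genuinely $O(y^{n+1})$).
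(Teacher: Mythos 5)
The main gap is in your primary route. The recursion underlying Lemma \ref{45} is obtained by differentiating the equation $(k-1)$ times in $y$, so with only $C^3$-asymptotic regularity it can force $\partial_y^{(i)}w(\mathbf{x}',0)=0$ only for $i\le 2$ (as you yourself note); the ``formal expansion of $w$'' at orders $3,\dots,n$ does not exist at this regularity, so the coefficients of $y^3,\dots,y^n$ cannot be ``killed by the recursion'' --- there are no such coefficients to speak of, and Taylor expansion stops at $|w|\le Cy^3$. Obtaining higher-order expansions from low regularity is exactly what the paper avoids (and is related to what it only conjectures after Theorem \ref{35}). Separately, your parenthetical claim that the resonant coefficient at order $y^{n+1}$ ``is determined by and equal for the two surfaces'' is backwards: that coefficient is precisely the free (Neumann-type) datum which generically differs between $\Gamma_1$ and $\Gamma_2$; the content of the estimate $|h|\le Cy^{n+1}$ is that the difference first appears at that order, not that it vanishes there.

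Your fallback is essentially the paper's actual argument, but the sketch needs repair. The paper writes $\Gamma_2$ as a normal graph $h$ over $\Gamma_1$ (existence and $C^2$-up-to-$M$ regularity of $h$ coming from orthogonality and the boundedness of $\overbar{A}_{\Gamma_i}$), uses Lemma \ref{45} only to conclude $h$, $\overbar{\nabla}h$, $\overbar{\nabla}^2h$ vanish on $M$, and then derives an equation on $\Gamma_1$ (not on the ambient domain $\Omega$) for $f=h/y$ of the form $\Delta f+(|A_{\Gamma_1}|^2_{\mathbb{H}}-n)f=af+y\left\langle\mathbf{b},\nabla f\right\rangle$ with $|a|+|\mathbf{b}|\le C(|h|+|\overbar{\nabla}h|+|\overbar{\nabla}^2h|)\to0$ at $M$, so that after shrinking $\epsilon_1$ the zeroth-order coefficient of $\mathcal{L}$ is nonpositive and the maximum principle applies on $\Gamma_1\cap\{y\le\epsilon_1\}$. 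Crucially, a single power $Ay^{n+1}$ (equivalently $Ay^n$ for $f$) cannot serve as a barrier: it sits exactly at the indicial root, so $\mathcal{L}(y^n)=\mathcal{O}(y^{n+1})$ with no sign. The paper uses the two-term barrier $g=y^n-Ky^{n+1}$, exploiting $\mathcal{L}(y^{n+1})=(n+2)y^{n+1}+\mathcal{O}(y^{n+2})>0$ to absorb the error and make $\mathcal{L}g\le0$, then compares $f$ with a multiple of $g$ scaled by $\sup_{\{y=\epsilon_1\}}f^{\pm}$. If you carry out your maximum-principle fallback in this form (including the sign check on the zeroth-order coefficient of your linearized graph equation), you recover the paper's proof; as written, the recursion route cannot yield more than $|h|\le Cy^{3}$.
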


\begin{proof}
Since both $\Gamma_1$ and $\Gamma_2$ meet $M$ orthogonally and the second fundamental form, $|\overbar{A_{\Gamma_i}}|$, is bounded near the ideal boundary, it follows that there are an $\epsilon_1>0$ and a function $h$ satisfying (\ref{78}). Moreover, $h$ is $C^2$ up to $M$. By Lemma \ref{45}, for a small $\epsilon>0$, when writing $\Gamma_i$ as graphs over $M\times[0,\epsilon)$ near $\{y=0\}$, the $y$-derivatives are determined by $M$ up to the second order. So,
\begin{equation*}
    h\big|_M=0,\ \overbar{\nabla}h\big|_M=0\text{ and }\overbar{\nabla}^2h\big|_M=0.
\end{equation*}
Set $\Gamma_{1,\epsilon_1}:=\Gamma_1\cap\{y\leq\epsilon_1\}$. By a direct computation, $f=\frac{h}{y}$ satisfies the equation
\begin{equation*}
    \Delta f+(|A_{\Gamma_1}|^2_\mathbb{H}-n)f=a(p,h,\overbar{\nabla}h,\overbar{\nabla}^2h)f+y\left\langle\textbf{b}(p,h,\overbar{\nabla}h,\overbar{\nabla}^2h),\nabla f\right\rangle,
\end{equation*}
where $a\in C^0(\Gamma_{1,\epsilon_1}\times\mathbb{R}\times \mathbb{R}^{n+1}\times\mathbb{R}^{(n+1)^2};\mathbb{R})$, $b\in C^0(\Gamma_{1,\epsilon_1}\times\mathbb{R}\times \mathbb{R}^{n+1}\times\mathbb{R}^{(n+1)^2};\mathbb{R}^{n+1})$ satisfy
\begin{equation*}
    |a|+|\textbf{b}|\leq C(|h|+|\overbar{\nabla}h|+|\overbar{\nabla}^2h|).
\end{equation*}
So, $a\big|_M=0$ and $\textbf{b}\big|_M=\textbf{0}$.

Since $|A_{\Gamma_1}|^2_\mathbb{H}=\mathcal{O}(y^2)$, we can shrink $\epsilon_1$ if necessary, so that $|A_{\Gamma_1}|^2_\mathbb{H}-n-a\leq0$ for $y(p)\leq\epsilon_1$. That is, the operator $\mathcal{L}=\Delta-y\left\langle\textbf{b},\nabla\right\rangle+(|A_{\Gamma_1}|^2_\mathbb{H}-n-a)$ satisfies the maximum principle in $\Gamma_{1,\epsilon_1}$. Let $g(p)=y^{n}(p)-Ky^{n+1}(p)$($K$ to be determined). Then, near the ideal boundary, 
\begin{align}
   \mathcal{L}(y^{n}(p))&=\Delta y^{n}(p)-y\left\langle\textbf{b},\nabla y^{n}(p)\right\rangle+(|A_{\Gamma_1}|^2_\mathbb{H}-n-a)y^{n}(p)\\
   &=y^2\partial_{yy}(y^{n})+(2-n)y\partial_y(y^{n})-ny^{n}+\mathcal{O}(y^{n+1})=\mathcal{O}(y^{n+1}).\nonumber
\end{align}
Similarly, we have $\mathcal{L}(y^{n+1})=(n+2)y^{n+1}+\mathcal{O}(y^{n+2})$. So, by further shrinking $\epsilon_1$ and choosing $K$ appropriately, we have $\mathcal{L}g=\mathcal{L}(y^n-Ky^{n+1})\leq0$. Let $\lambda=\sup_{\{y(p)=\epsilon_1\}}f^+\geq0$ and $g_1=\frac{\lambda}{\epsilon_1^n-K\epsilon_1^{n+1}}g$. Then,
\begin{equation*}
    \mathcal{L}(f-g_1)\geq0,\ f\big|_{y=0}=0=g_1\big|_{y=0}\text{ and }f\big|_{y=\epsilon_1}\leq g_1\big|_{y=\epsilon_1}.
\end{equation*}
By the maximum principle, $f\leq g_1$ in $\Gamma_1\cap\{y\leq\epsilon_1\}$. This proves the upper bound.

The lower bound follows in a similar manner if we let $\lambda'=\sup_{\{y=\epsilon_1\}}f^-\geq0$ and consider $\mathcal{L}(f+\frac{\lambda'}{\epsilon_1^n-K\epsilon_1^{n+1}}g)\leq0$.
\end{proof}

\begin{proof}[Proof of Theorem \ref{35}]
By Lemma \ref{30}, $\Sigma'$ is in a thin neighborhood of $\Sigma$ when $n\geq3$. If $n=2$, by Remark \ref{47}, $E_{rel}[\Sigma',\Sigma]$ is still well-defined. It follows from Theorem \ref{relative entropy} that $E^{(r)}_{rel}[\Sigma',\Sigma]=E_{rel}[\Sigma',\Sigma]$ for any boundary defining function $r$. Moreover, we have $E_{rel}[\Sigma',\Sigma]>-\infty$. Similarly, $-E_{rel}[\Sigma',\Sigma]=E_{rel}[\Sigma,\Sigma']>-\infty$. This implies $E_{rel}[\Sigma',\Sigma]$ is a finite value.

Fix any boundary defining function $r$ and consider $\mathcal{H}^n_\mathbb{H}(\Sigma\cap\{r\geq\epsilon\})$. We choose an $\epsilon_1>0$, so that $\frac{1}{C}\leq|\overbar{\nabla}^{\overbar{\Sigma}} r|\leq C$ for $r\leq\epsilon_1$. By the coarea formula,
\begin{align}
    \mathcal{H}^n_\mathbb{H}(\Sigma\cap\{r\geq\epsilon\})&=\mathcal{H}^n_\mathbb{H}(\Sigma\cap\{r\geq\epsilon_1\})+\mathcal{H}^n_\mathbb{H}(\Sigma\cap\{\epsilon\leq r<\epsilon_1\})\label{53}\\
    &=\mathcal{H}^n_\mathbb{H}(\Sigma\cap\{r\geq\epsilon_1\})+\int_\epsilon^{\epsilon_1}\int_{\Sigma\cap\{r=s\}}\frac{1}{|\nabla^\Sigma r|}d\mathcal{H}^{n-1}_\mathbb{H}ds,\nonumber
\end{align}
where $|\nabla^\Sigma r|=r|\overbar{\nabla}_r^{\overbar{\Sigma}}r|$. Here, $\overbar{\nabla}_r$ is the gradient taken in $(\overbar{\mathbb{H}}^{n+1},r^2g_P)$. So,
\begin{equation}\label{48}
    \int_{\Sigma\cap\{r=s\}}\frac{1}{|\nabla^\Sigma r|}d\mathcal{H}^{n-1}_\mathbb{H}=\frac{1}{s^{n}}\int_{\Sigma\cap\{r=s\}}\frac{1}{|\overbar{\nabla}_r^{\overbar{\Sigma}}r|}d\mathcal{H}_r^{n-1},
\end{equation}
where $\mathcal{H}_r^{n-1}$ is the ($n-1$)-dimensional Hausdorff measure in ($\overbar{\mathbb{H}}^{n+1}$,$r^2g_P$). Since $\Sigma$ is $C^{n,\alpha}$-asymptotic to $M$, the quantity $\int_{\Sigma\cap\{r=s\}}\frac{1}{|\overbar{\nabla}_r^{\overbar{\Sigma}}r|}d\mathcal{H}_r^{n-1}$ is a $C^{n-1,\alpha}$ function in $s\in[0,\epsilon_1)$. Thus, taking the Taylor's expansion at $s=0$,
\begin{equation}\label{49}
    \int_{\Sigma\cap\{r=s\}}\frac{1}{|\overbar{\nabla}_r^{\overbar{\Sigma}}r|}d\mathcal{H}_r^{n-1}=a_0+a_1s+\cdots+a_{n-1}s^{n-1}+\mathcal{O}(s^{n-1+\alpha}).
\end{equation}
Combining (\ref{48}) and (\ref{49}), we see that
\begin{align}\label{52}
    \int_{\Sigma\cap\{r=s\}}\frac{1}{|\nabla^\Sigma r|}d\mathcal{H}^{n-1}_\mathbb{H}=\frac{1}{s^n}\big(a_0+a_1s+\cdots+a_{n-1}s^{n-1}+\mathcal{O}(s^{n-1+\alpha})\big).
\end{align}
Plugging (\ref{52}) into (\ref{53}), we get an expansion for $\mathcal{H}^n_\mathbb{H}(\Sigma\cap\{r\geq\epsilon\})$:
\begin{equation*}
    \mathcal{H}^n_\mathbb{H}(\Sigma\cap\{r\geq\epsilon\})=\frac{b_0}{\epsilon^{n-1}}+\cdots+\frac{b_{n-2}}{\epsilon}+b_{n-1}\log\left(\frac{1}{\epsilon}\right)+b_n+o(1).
\end{equation*}
This proves the existence of the renormalized area $\mathcal{A}(\Sigma;r)$ (equal to $b_n$) when $M$ is $C^{n,\alpha}$. Similarly, we have
\begin{equation*}
    \mathcal{H}^n_\mathbb{H}(\Sigma'\cap\{r\geq\epsilon\})=\frac{b'_0}{\epsilon^{n-1}}+\cdots+\frac{b'_{n-2}}{\epsilon}+b'_{n-1}\log\left(\frac{1}{\epsilon}\right)+b'_n+o(1).
\end{equation*}
By definition, $E_{rel}[\Sigma',\Sigma]=\lim_{\epsilon\to0}\Big(\mathcal{H}^n_\mathbb{H}(\Sigma'\cap\{r\geq\epsilon\})-\mathcal{H}^n_\mathbb{H}(\Sigma\cap\{r\geq\epsilon\})\Big)$. Since $E_{rel}[\Sigma',\Sigma]$ is finite, we see that $b_i=b'_i$ for $0\leq i\leq n-1$ and that $E_{rel}[\Sigma',\Sigma]=b'_n-b_n=\mathcal{A}(\Sigma';r)-\mathcal{A}(\Sigma;r)$.
\end{proof}

\begin{rem}
In Appendix \ref{64}, we will give an explicit formula of the relative entropy for geodesics in hyperbolic plane and verify that it is indeed independent of the choice of boundary defining functions.
\end{rem}

\section{Weighted Relative Entropy\label{33}}

In this section, we study a weighted version of the relative entropy. Since the thin neighborhood $\Omega$ is bounded in $\mathbb{R}^{n+1}_+$, we can pick a boundary defining function $r_1$ satisfying $r_1(\textbf{x},y)=y$ in a compact subset of $\overbar{\mathbb{R}^{n+1}_+}$ containing $\Omega$. Thus, 
\begin{equation*}
    E_{rel}[\Sigma',\Sigma]=E^{(r_1)}_{rel}[\Sigma',\Sigma]=\lim_{\epsilon\to0^+}\big(\mathcal{H}^n_\mathbb{H}(\Sigma'\cap\{y\geq\epsilon\})-\mathcal{H}^n_\mathbb{H}(\Sigma\cap\{y\geq\epsilon\})\big).
\end{equation*}
For simplicity, we will always use this specific boundary defining function when computing the relative entropy. For any function $\psi$, let
\begin{align}\label{56}
    E_{rel}[\partial^*U,\Sigma;\psi]:=\lim_{\epsilon\to0^+}E_{rel}[\partial^*U,\Sigma;\psi\chi_{\{y\geq\epsilon\}}]=\lim_{\epsilon\to0^+}\Big(\int_{\partial^*U\cap\{y\geq\epsilon\}}\psi d\mathcal{H}^n_{\mathbb{H}}-\int_{\Sigma\cap\{y\geq\epsilon\}}\psi d\mathcal{H}^n_{\mathbb{H}}\Big)
\end{align}
whenever it exists. Observe that (\ref{56}) is always well-defined for any compactly supported continuous function whose support is away from $\mathbb{R}^n\times\{0\}$.

The main theorem of this section is the following:

\begin{thm}
Assume that $U\in\mathcal{C}(\Gamma'_-,\Gamma'_+)$ satisfies $E_{rel}[\partial^*U,\Sigma]<\infty$. Then, for any $\psi(p,\mathbf{v})\in\mathfrak{X}$, the limit $(\ref{56})$ exists. If we denote it by $E_{rel}[\partial^*U,\Sigma;\psi]$, then
\begin{equation}
    |E_{rel}[\partial^*U,\Sigma;\psi]|\leq C(|E_{rel}[\partial^*U,\Sigma]|+1)\|\psi\|_{\mathfrak{X}},\label{10}
\end{equation}
where the constant $C$ only depends on $\Sigma$ and $\Omega$.\label{weighted entropy} Moreover, for $0<\frac{y_1}{2}<y_1-\delta<y_1<y_2<y_2+\delta<2y_2<\epsilon$,
\begin{equation}\label{73}
     |E_{rel}[\Gamma,\Sigma;\phi_{y_1,y_2,\delta}\psi]|\leq C(|E_{rel}[\Gamma,\Sigma;\phi_{y_1,y_2,\delta}]|+y_2)\|\psi\|_{\mathfrak{X}}.
\end{equation}
where $\epsilon$ is made so small that the vector field $\mathbf{X}$ defined in $(\ref{39})$ exists.
\end{thm}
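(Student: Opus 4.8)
The plan is to reduce the weighted estimate to the unweighted relative entropy via the same divergence-theorem argument used for Theorem \ref{relative entropy}, but now applied to the vector field $\psi \mathbf{X}$ (or more precisely to $\psi(p,\mathbf{n}_\Sigma(p))\mathbf{X}$, viewing $\psi$ as a function on the Grassmann bundle so that the value on $\Sigma$ and on $\partial^*U$ makes sense) rather than to $\mathbf{X}$ alone. First I would fix the cutoff $\phi_{y_1,y_2,\delta}$ and write, for $\Gamma = \partial^*U$,
\begin{align*}
    \int_\Gamma \phi_{y_1,y_2,\delta}\,\psi\, d\mathcal{H}^n_\mathbb{H} - \int_\Sigma \phi_{y_1,y_2,\delta}\,\psi\, d\mathcal{H}^n_\mathbb{H}
    &= \big(\text{term with } \langle \phi\psi\mathbf{X},\mathbf{n}\rangle\big) + \big(\text{error on }\Gamma\text{ and }\Sigma\big),
\end{align*}
where on $\Sigma$ the error vanishes because $\mathbf{X}$ restricts to the unit normal $\mathbf{n}_\Sigma$ there, so $\langle \mathbf{X},\mathbf{n}_\Sigma\rangle = 1$ and the integrand is exactly $\phi\psi$; on $\Gamma$ one has $\langle\mathbf{X},\mathbf{n}_\Gamma\rangle \le 1$, which gives a one-sided bound. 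Then I would apply the divergence theorem on $U\setminus U_\Sigma$ and $U_\Sigma\setminus U$ to convert the flux term into $\int_\Omega \operatorname{div}(\phi\psi\mathbf{X})\, d\mathcal{H}^{n+1}_\mathbb{H}$, expand $\operatorname{div}(\phi\psi\mathbf{X}) = \phi\psi\operatorname{div}\mathbf{X} + \psi\langle\nabla\phi,\mathbf{X}\rangle + \phi\langle\nabla\psi,\mathbf{X}\rangle$, and estimate each piece.

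The three volume integrals are controlled exactly as in (\ref{51})--(\ref{62}): the bounds $|\operatorname{div}\mathbf{X}|\le Cy$, $|\mathbf{X}^{(n+1)}|\le Cy^2$, $\mathcal{H}^n_\mathbb{H}(\Omega\cap\{y=y_0\})\le Cy_0$, and $|\nabla\phi_{y_1,y_2,\delta}|\le C/\delta$ supported on two slabs of width $\delta$, together with the co-area formula, give $\int_\Omega \phi\,|\operatorname{div}\mathbf{X}| \le Cy_2$ and $\int_\Omega |\langle\nabla\phi,\mathbf{X}\rangle| \le Cy_2$, each now carrying an extra factor $\|\psi\|_\infty \le \|\psi\|_\mathfrak{X}$. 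The genuinely new term is $\int_\Omega \phi\,\langle\nabla\psi,\mathbf{X}\rangle\, d\mathcal{H}^{n+1}_\mathbb{H}$: here $\nabla\psi$ means the full gradient of $p\mapsto\psi(p,\mathbf{n}_\Sigma\circ\Pi(p))$, which decomposes into the base-direction part (controlled by $\|y\nabla_{\mathbb{R}^{n+1}_+}\psi\|_\infty$, since $|\mathbf{X}|_\mathbb{H} = 1$ forces $|\mathbf{X}| = y$, and the hyperbolic gradient norm absorbs the $y$) and the fibre-direction part $\nabla_{\mathbb{S}^n}\psi$ composed with $\nabla(\mathbf{n}_\Sigma\circ\Pi)$, which is bounded because $\overbar{\Sigma}$ is $C^2$ up to the boundary and $\Pi$ is $C^1$, so $|\nabla(\mathbf{n}_\Sigma\circ\Pi)|_\mathbb{H}$ is bounded near $M$. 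Hence $|\phi\langle\nabla\psi,\mathbf{X}\rangle| \le C\|\psi\|_\mathfrak{X}\,y\cdot\mathbf{1}_{\Omega\cap\{y<\epsilon\}}$, and integrating against $\mathcal{H}^{n+1}_\mathbb{H}$ over the thin neighborhood gives a bound of the form $C\|\psi\|_\mathfrak{X}\int_0^{y_2} y\,dy \cdot(\text{slice bound}) \le Cy_2\|\psi\|_\mathfrak{X}$. Combining, the one-sided inequality reads
\begin{equation*}
    \Big|\int_\Gamma \phi\psi\, d\mathcal{H}^n_\mathbb{H} - \int_\Sigma \phi\psi\, d\mathcal{H}^n_\mathbb{H}\Big| \le \Big|E_{rel}[\Gamma,\Sigma;\phi]\Big|\,\|\psi\|_\infty + Cy_2\|\psi\|_\mathfrak{X},
\end{equation*}
which, after the standard $\delta\to 0$ passage via dominated convergence (using that $\partial^*U$ has finite $\mathcal{H}^n$-measure on compacta), yields (\ref{73}). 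Actually, to get the clean form with $E_{rel}[\Gamma,\Sigma;\phi_{y_1,y_2,\delta}]$ rather than its pieces, I would more carefully split $\phi\psi\mathbf{X}$ keeping the flux term as $E_{rel}[\Gamma,\Sigma;\phi_{y_1,y_2,\delta}\,\psi(\cdot,\mathbf{n}_\Sigma)]$ on $\Sigma$ and bounding the $\Gamma$-side by $\|\psi\|_\infty$ times the $\Gamma$-perimeter in the slab, then reabsorb.

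For the first assertion — existence of the limit (\ref{56}) and the global bound (\ref{10}) under the hypothesis $E_{rel}[\partial^*U,\Sigma]<\infty$ — I would run the same computation with the plain cutoff $\phi_{y_1,\delta}$ in place of $\phi_{y_1,y_2,\delta}$, obtaining a monotonicity-type inequality $E_{rel}[\Gamma,\Sigma;\phi_{y_1,\delta}^{(r)}\psi] \ge E_{rel}[\Gamma,\Sigma;\phi_{y_2+\delta,\delta}^{(r)}\psi] - Cy_2\|\psi\|_\mathfrak{X}$ after sending $\delta\to 0$; this gives that $\liminf$ and $\limsup$ as $y_1\to 0^+$ coincide, hence the limit exists. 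For the quantitative bound, I would use the finiteness of $E_{rel}[\partial^*U,\Sigma]$ together with $E_{rel}[\Sigma,\partial^*U]<\infty$ (symmetry, as in the proof of Theorem \ref{35}) to control $\mathcal{H}^n_\mathbb{H}(\partial^*U\cap\{y\ge t_2\})$ in terms of $|E_{rel}[\partial^*U,\Sigma]|$ plus the fixed quantity $\mathcal{H}^n_\mathbb{H}(\Sigma\cap\{y\ge t_2\})$, then take $\psi$ times $\chi_{\{y\ge t_2\}}$ and let $t_2\to 0$, absorbing the flux term against $\|\psi\|_\infty\big(\mathcal{H}^n_\mathbb{H}(\partial^*U\cap\{y\ge t_2\}) + \mathcal{H}^n_\mathbb{H}(\Sigma\cap\{y\ge t_2\})\big)$ and the divergence term against $Cy_2\|\psi\|_\mathfrak{X}\to 0$.

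The main obstacle I anticipate is handling the $\langle\nabla\psi,\mathbf{X}\rangle$ term rigorously: one must be careful that $\psi$ enters evaluated on the \emph{tangent plane of $\Sigma$} (equivalently at $\mathbf{n}_\Sigma$) in both flux integrals so that the $\Sigma$-flux is literally $\int_\Sigma\phi\psi$, and then that differentiating the composite $p\mapsto\psi(\Pi(p),\mathbf{n}_\Sigma(\Pi(p)))$ brings in exactly the two norms $\|y\nabla_{\mathbb{R}^{n+1}_+}\psi\|_\infty$ and $\|\nabla_{\mathbb{S}^n}\psi\|_\infty$ that appear in $\|\cdot\|_\mathfrak{X}$, with the $C^1$-regularity of $\Pi$ and $C^2$-regularity of $\overbar\Sigma$ supplying the boundedness of $\nabla\mathbf{n}_\Sigma$ and $\nabla\Pi$ measured hyperbolically near $M$. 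Once the chain-rule bookkeeping is set up, everything else is a rerun of the estimates already established in Section 2 and in the proof of Theorem \ref{relative entropy}.
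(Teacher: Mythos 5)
There is a genuine gap, and it sits exactly at the point you flag as the "main obstacle." In the weighted entropy $E_{rel}[\Gamma,\Sigma;\psi]$ the weight is evaluated at the normal of \emph{each} hypersurface: the integrand on $\Gamma$ is $\psi(p,\overbar{\textbf{n}}_\Gamma(p))$, not $\psi(p,\overbar{\textbf{X}}(p))$. Your divergence-theorem argument with the vector field $\psi(p,\textbf{n}_\Sigma\circ\Pi(p))\,\textbf{X}$ freezes the fibre variable at $\overbar{\textbf{X}}$, so it only compares $\int_\Gamma\phi\,\psi(\cdot,\overbar{\textbf{X}})\left\langle\textbf{X},\textbf{n}_\Gamma\right\rangle$ with $\int_\Sigma\phi\,\psi(\cdot,\textbf{n}_\Sigma)$; this is in substance the paper's Lemma \ref{lip lem} and Proposition \ref{4}, i.e.\ the case of a $\textbf{v}$-independent weight. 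What is missing is the term $\int_\Gamma\phi\big(\psi(p,\overbar{\textbf{n}}_\Gamma)-\psi(p,\overbar{\textbf{X}})\big)$, which is \emph{first order} in the tilt: it is bounded only by $\|\nabla_{\mathbb{S}^n}\psi\|_\infty\,d_{\mathbb{S}^n}(\overbar{\textbf{n}}_\Gamma,\overbar{\textbf{X}})\sim\sqrt{1-(\overbar{\textbf{n}}_\Gamma\cdot\overbar{\textbf{X}})^2}$, whereas the relative entropy (the excess) only controls the \emph{square} of the tilt; a Cauchy--Schwarz attempt brings in $\mathcal{H}^n_\mathbb{H}(\Gamma\cap\{y\geq y_1\})$, which blows up as $y_1\to0$, so the estimate does not close. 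A symptom of the gap is that your argument never uses the evenness hypothesis $\psi(p,\textbf{v})=\psi(p,-\textbf{v})$ nor the norm $\|\nabla_{\mathbb{S}^n}\nabla_{\mathbb{S}^n}\psi\|_\infty$, both of which are essential in the paper.

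The paper's proof closes this gap with a quadratic approximation in the fibre variable: one subtracts the quadratic form $(\textbf{Y}_\psi(p)\cdot\textbf{v})(\overbar{\textbf{X}}(p)\cdot\textbf{v})$ with $\textbf{Y}_\psi=(\nabla_{\mathbb{S}^n}\psi)(p,\overbar{\textbf{X}}(p))$, so that the corrected weight $\overbar{\psi}$ has vanishing fibre gradient at $\textbf{v}=\overbar{\textbf{X}}(p)$; then evenness gives the second-order bound (\ref{7}), $|\overbar{\psi}(p,\textbf{v})-\overbar{\psi}(p,\overbar{\textbf{X}})|\leq C\|\nabla_{\mathbb{S}^n}\nabla_{\mathbb{S}^n}\psi\|_\infty(1-(\textbf{v}\cdot\overbar{\textbf{X}})^2)$, an error that vanishes on $\Sigma$ and whose integral over $\Gamma$ is itself a relative entropy with a quadratic-form weight. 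Controlling such quadratic-form weights requires the separate Lemma \ref{69} (divergence theorem applied to $(\textbf{X}\cdot\textbf{Y})\textbf{Y}$, the inequality $(\textbf{Y}\cdot\overbar{\textbf{X}})(\textbf{Y}\cdot\overbar{\textbf{n}}_\Gamma)\leq\frac12(\textbf{Y}\cdot\overbar{\textbf{X}})^2+\frac12(\textbf{Y}\cdot\overbar{\textbf{n}}_\Gamma)^2$ with equality on $\Sigma$, followed by polarization and a coordinate decomposition to get two-sided bounds). Your outline of the $\textbf{v}$-independent estimate, the $\delta\to0$ and Cauchy-criterion passage to (\ref{56}), and the deduction of (\ref{10}) from (\ref{73}) is consistent with the paper, but without the quadratic-form lemma and the $\overbar{\psi}$ decomposition the central inequality (\ref{73}) for genuinely $\textbf{v}$-dependent $\psi\in\mathfrak{X}$ is not established.
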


The arguments in \cite[Section 4]{BWRelativeEntropy} can be carried out in our setting. In \cite{BWRelativeEntropy}, the authors first proved that if the weights are quadratic forms, then the weighted relative entropy is well-defined. Then, for a general weight, they showed that by subtracting off an appropriate quadratic form from it, the error term has a reasonably good estimate. We follow the same scheme.

We first prove a weak version of Theorem \ref{weighted entropy} that will be used later on.

\begin{lem}\label{lip lem}
Let $U_\Gamma\in\mathcal{C}(\Gamma'_-,\Gamma'_+)$ with $\Gamma=\partial^*U_\Gamma$. If $\psi$ is a locally Lipschitz function over $\mathbb{R}^{n+1}_+$ with $\|\psi\|_\infty+\|y\overbar{\nabla}\psi\|_\infty\leq 1$, then for $0<\frac{y_1}{2}<y_1-\delta<y_1<y_2<y_2+\delta<2y_2<\epsilon$,
\begin{equation}
    \big|E_{rel}[\Gamma,\Sigma;\psi\phi_{y_1,y_2,\delta}]\big|\leq\big| E_{rel}[\Gamma,\Sigma;\phi_{y_1,y_2,\delta}]\big|+Cy_2,\label{vari for lip}
\end{equation}
where $\epsilon$ is the same as in Theorem \ref{weighted entropy}.
\end{lem}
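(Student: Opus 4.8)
The plan is to imitate the divergence-theorem argument in the proof of Theorem~\ref{relative entropy}, applied now to the Lipschitz vector field $\psi\,\phi_{y_1,y_2,\delta}\,\textbf{X}$ in place of $\phi_{y_1,y_2,\delta}\,\textbf{X}$; the only genuinely new point is that $\psi$ need not have a sign, and this is dealt with \emph{after} integrating rather than before. Write $\phi=\phi_{y_1,y_2,\delta}$. Since $\phi\equiv0$ off $\{y_1-\delta\le y\le y_2+\delta\}$, the product $\psi\phi$ is locally Lipschitz with compact support contained in $\text{cl}(\Omega)\cap\{y_1-\delta\le y\le y_2+\delta\}$, a set bounded away from $\mathbb{R}^n\times\{0\}$ and lying in the region $\{y<\epsilon\}$ where $\textbf{X}$ is defined; hence $E_{rel}[\Gamma,\Sigma;\psi\phi]$ is simply the finite difference $\int_\Gamma\psi\phi\,d\mathcal{H}^n_\mathbb{H}-\int_\Sigma\psi\phi\,d\mathcal{H}^n_\mathbb{H}$, with no limit to take. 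Because $U_\Gamma$ and $U_\Sigma$ both lie between $U_{\Gamma'_-}$ and $U_{\Gamma'_+}$, their symmetric difference lies in $\Omega$, so the Gauss--Green formula for Caccioppoli sets applies to $\psi\phi\textbf{X}$ and, recalling $\langle\textbf{X},\textbf{n}_\Sigma\rangle=1$ on $\Sigma$ and $0\le1-\langle\textbf{X},\textbf{n}_\Gamma\rangle$ on $\Gamma$ (as used in (\ref{51})), gives
\begin{equation*}
\Big|\int_\Gamma\langle\psi\phi\textbf{X},\textbf{n}_\Gamma\rangle\,d\mathcal{H}^n_\mathbb{H}-\int_\Sigma\psi\phi\,d\mathcal{H}^n_\mathbb{H}\Big|\le 2\int_\Omega\big|\text{div}(\psi\phi\textbf{X})\big|\,d\mathcal{H}^{n+1}_\mathbb{H}.
\end{equation*}

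The next step is to bound the right-hand side. Writing $\text{div}(\psi\phi\textbf{X})=\psi\phi\,\text{div}\textbf{X}+\phi\langle\nabla\psi,\textbf{X}\rangle+\psi\langle\nabla\phi,\textbf{X}\rangle$, I would estimate the three terms using $|\psi|\le1$, $|\phi|\le1$, the bound $|\text{div}\textbf{X}|\le Cy$ from (\ref{41}), the identity $|\nabla\psi|_\mathbb{H}=y|\overbar{\nabla}\psi|\le1$ (valid because the chosen defining function $r_1$ satisfies $r_1^2g_P=g_{\mathbb{R}^{n+1}}$ on $\Omega$, so the hypothesis $\|y\overbar{\nabla}\psi\|_\infty\le1$ is exactly $|\nabla\psi|_\mathbb{H}\le1$) together with $|\textbf{X}|_\mathbb{H}\le1$, and $|\langle\nabla\phi,\textbf{X}\rangle|=|\phi'(y)|\,|\textbf{X}^{(n+1)}|\le\frac{C}{\delta}y^2$ coming from (\ref{46}) and $|\phi'|\le C/\delta$ (recall $\nabla\phi$ is supported in $\{y_1-\delta<y<y_1\}\cup\{y_2<y<y_2+\delta\}$). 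Integrating against $d\mathcal{H}^{n+1}_\mathbb{H}$ over $\Omega$ and invoking the thin-neighborhood volume estimates of Section~2 — namely $\mathcal{H}^{n+1}_\mathbb{H}(\Omega\cap\{a<y<b\})\le C(b-a)$, $\int_{\Omega\cap\{a<y<b\}}y\,d\mathcal{H}^{n+1}_\mathbb{H}\le Cb^2$, and $\int_{\Omega\cap\{a<y<a+\delta\}}y^2\,d\mathcal{H}^{n+1}_\mathbb{H}\le C(a+\delta)^2\delta$ — each contribution is $\le Cy_2$ with $C=C(\Sigma,\Omega)$ (the term from $\phi\langle\nabla\psi,\textbf{X}\rangle$ is the dominant one). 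Thus the displayed flux error is $\le Cy_2$. The identical computation with $\psi\equiv1$ gives $\int_\Omega|\text{div}(\phi\textbf{X})|\,d\mathcal{H}^{n+1}_\mathbb{H}\le Cy_2$, hence also $\big|\int_\Gamma\langle\phi\textbf{X},\textbf{n}_\Gamma\rangle\,d\mathcal{H}^n_\mathbb{H}-\int_\Sigma\phi\,d\mathcal{H}^n_\mathbb{H}\big|\le Cy_2$.

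Finally I would decompose
\begin{equation*}
E_{rel}[\Gamma,\Sigma;\psi\phi]=\int_\Gamma\psi\phi\,(1-\langle\textbf{X},\textbf{n}_\Gamma\rangle)\,d\mathcal{H}^n_\mathbb{H}+\Big(\int_\Gamma\langle\psi\phi\textbf{X},\textbf{n}_\Gamma\rangle\,d\mathcal{H}^n_\mathbb{H}-\int_\Sigma\psi\phi\,d\mathcal{H}^n_\mathbb{H}\Big).
\end{equation*}
The second summand has modulus $\le Cy_2$ by the previous paragraph. For the first, since $\phi\ge0$, $|\psi|\le1$, and $1-\langle\textbf{X},\textbf{n}_\Gamma\rangle\ge0$, the integrand is dominated in modulus by $\phi\,(1-\langle\textbf{X},\textbf{n}_\Gamma\rangle)\ge0$; hence the first summand is at most
\begin{equation*}
\int_\Gamma\phi\,(1-\langle\textbf{X},\textbf{n}_\Gamma\rangle)\,d\mathcal{H}^n_\mathbb{H}=E_{rel}[\Gamma,\Sigma;\phi]-\Big(\int_\Gamma\langle\phi\textbf{X},\textbf{n}_\Gamma\rangle\,d\mathcal{H}^n_\mathbb{H}-\int_\Sigma\phi\,d\mathcal{H}^n_\mathbb{H}\Big)\le\big|E_{rel}[\Gamma,\Sigma;\phi]\big|+Cy_2,
\end{equation*}
using the $\psi\equiv1$ flux estimate for the parenthesized term. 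Combining, $|E_{rel}[\Gamma,\Sigma;\psi\phi_{y_1,y_2,\delta}]|\le|E_{rel}[\Gamma,\Sigma;\phi_{y_1,y_2,\delta}]|+Cy_2$, which is (\ref{vari for lip}). I expect the only real obstacle to be precisely the handling of the sign of $\psi$: one cannot use the comparison $\langle\psi\phi\textbf{X},\textbf{n}_\Gamma\rangle\le\psi\phi$ that worked in the proof of Theorem~\ref{relative entropy}, but the quantity produced by the divergence theorem, $\psi\phi\,(1-\langle\textbf{X},\textbf{n}_\Gamma\rangle)$, has \emph{pointwise} magnitude controlled by $\phi\,(1-\langle\textbf{X},\textbf{n}_\Gamma\rangle)$ using only $|\psi|\le1$ and the nonnegativity of $1-\langle\textbf{X},\textbf{n}_\Gamma\rangle$, which returns us to the unweighted relative entropy modulo an $O(y_2)$ divergence error. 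The condition $\|y\overbar{\nabla}\psi\|_\infty\le1$ enters only to keep $|\langle\nabla\psi,\textbf{X}\rangle|$ bounded, which is harmless since the slab supporting $\phi$ has hyperbolic $(n+1)$-volume $O(y_2)$ by thinness.
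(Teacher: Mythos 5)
Your proof is correct, and its engine is the same as the paper's: the divergence theorem applied to $\psi\phi_{y_1,y_2,\delta}\textbf{X}$, the pointwise facts $\left\langle\textbf{X},\textbf{n}_\Sigma\right\rangle=1$ on $\Sigma$ and $\left\langle\textbf{X},\textbf{n}_\Gamma\right\rangle\leq1$ on $\Gamma$ exactly as in (\ref{51}), and the slab estimates coming from (\ref{41}), (\ref{46}) and the thin-neighborhood bound (\ref{42}), which produce the $Cy_2$ error; your reading of $E_{rel}[\Gamma,\Sigma;\psi\phi_{y_1,y_2,\delta}]$ as a single finite difference (no limit needed) and the identity $|\nabla\psi|_{\mathbb{H}}=y|\overbar{\nabla}\psi|$ also match the paper's conventions. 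Where you genuinely differ is the treatment of the sign of $\psi$. The paper first proves the one-sided bound $E_{rel}[\Gamma,\Sigma;\psi\phi_{y_1,y_2,\delta}]\geq-Cy_2$ for \emph{nonnegative} $\psi$ (so that the comparison $\left\langle\psi\phi_{y_1,y_2,\delta}\textbf{X},\textbf{n}_\Gamma\right\rangle\leq\psi\phi_{y_1,y_2,\delta}$ is available), then gets the upper bound by substituting $1-\psi$, and finally handles general $\psi$ through $\tfrac{1+\psi}{2}$. You instead keep $\psi$ of arbitrary sign throughout, splitting the weighted entropy into the flux error, bounded by $2\int_\Omega|\text{div}(\psi\phi_{y_1,y_2,\delta}\textbf{X})|\,d\mathcal{H}^{n+1}_{\mathbb{H}}\leq Cy_2$ (which requires no sign information), plus the defect term $\int_\Gamma\psi\phi_{y_1,y_2,\delta}\big(1-\left\langle\textbf{X},\textbf{n}_\Gamma\right\rangle\big)d\mathcal{H}^n_{\mathbb{H}}$, whose absolute value is dominated pointwise by the unweighted defect and hence by $|E_{rel}[\Gamma,\Sigma;\phi_{y_1,y_2,\delta}]|+Cy_2$ after one more application of the $\psi\equiv1$ flux estimate. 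The two routes are of comparable length and yield the same constants; yours is arguably more transparent, since it isolates the only way the weight can enter (modulating the nonnegative defect density), while the paper's positivity reduction is the formulation whose one-sided character reappears later in the quadratic-weight estimates of Lemma \ref{69}, where lower bounds such as (\ref{5}) are obtained first and upper bounds are deduced afterwards.
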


\begin{proof}
We first assume $\psi$ is nonnegative. Then, similar to the proof of Theorem \ref{relative entropy}, we have
\begin{align*}
    &\int_{\Gamma}\psi\phi_{y_1,y_2,\delta}d\mathcal{H}^n_{\mathbb{H}^{n+1}}-\int_{\Sigma}\psi\phi_{y_1,y_2,\delta}d\mathcal{H}^n_{\mathbb{H}^{n+1}}\\
    \geq& -2\int_{\Omega}|\text{div}\left(\psi\phi_{y_1,y_2,\delta}\textbf{X}\right)|d\mathcal{H}^{n+1}_{\mathbb{H}^{n+1}}\\
    \geq& -2\int_{\Omega}\psi\phi_{y_1,y_2,\delta}|\text{div}\textbf{X}|+\psi|\left\langle\nabla\phi_{y_1,y_2,\delta},\textbf{X}\right\rangle|+\phi_{y_1,y_2,\delta}|\left\langle\nabla\psi,\textbf{X}\right\rangle|d\mathcal{H}^{n+1}_{\mathbb{H}^{n+1}}.
\end{align*}
Direct computations show that $\psi\phi_{y_1,y_2,\delta}|\text{div}\textbf{X}|\leq C\phi_{y_1,y_2,\delta}$ and
\begin{align*}
    |\left\langle \nabla\psi,\textbf{X}\right\rangle|=|\overbar{\nabla}\psi\cdot\textbf{X}|=|\big(y\overbar{\nabla}\psi\big)\cdot\big(\frac{\textbf{X}}{y}\big)|\leq C,
\end{align*}
Thus, applying the divergence theorem to the vector field $\textbf{X}$ in a similar way as in the proof of Theorem \ref{relative entropy},
\begin{align*}
     &\int_{\Gamma}\psi\phi_{y_1,y_2,\delta}d\mathcal{H}^n_{\mathbb{H}^{n+1}}-\int_{\Sigma}\psi\phi_{y_1,y_2,\delta}d\mathcal{H}^n_{\mathbb{H}^{n+1}}\\
     \geq& -C\mathcal{H}^n_{\mathbb{H}^{n+1}}\Big(\Omega\cap\{y_1-\delta\leq y\leq y_2+\delta\}\Big)-C\int_{\Omega\cap\big(\{y_1-\delta\leq y\leq y_1\}\cup\{y_2\leq y\leq y_2+\delta\}\big)}\frac{1}{\delta}d\mathcal{H}^n_{\mathbb{H}^{n+1}}\\
     \geq& -Cy_2.
\end{align*}
This gives $E_{rel}[\Gamma,\Sigma;\psi\phi_{y_1,y_2,\delta}]\geq-Cy_2$. If we replace $\psi$ by $1-\psi$ and notice that $1-\psi\geq0$, then
\begin{align*}
    E_{rel}[\Gamma,\Sigma;\psi\phi_{y_1,y_2,\delta}]=E_{rel}[\Gamma,\Sigma;\phi_{y_1,y_2,\delta}]-E_{rel}[\Gamma,\Sigma;(1-\psi)\phi_{y_1,y_2,\delta}]\leq E_{rel}[\Gamma,\Sigma;\phi_{y_1,y_2,\delta}]+Cy_2.
\end{align*}

For a general $\psi\in\mathfrak{X}$ with $\|\psi\|_\mathfrak{X}\leq 1$, we have $\frac{1+\psi}{2}\geq 0$ and $\|\frac{1+\psi}{2}\|_\mathfrak{X}\leq1$. So, (\ref{vari for lip}) follows if we apply the preceding argument to $\frac{1+\psi}{2}$.
\end{proof}

It is then immediate from the next proposition that Theorem \ref{weighted entropy} holds for weights $\psi(p,\textbf{v})\in\mathfrak{X}$ that is independent of $\textbf{v}$.

\begin{prop}\label{4}
Let $U_\Gamma\in\mathcal{C}(\Gamma'_-,\Gamma'_+)$ with $\Gamma=\partial^*U_\Gamma$. Assume that $E_{rel}[\Gamma,\Sigma]$ is finite and that $\psi$ is a locally Lipschitz function over $\mathbb{R}^{n+1}_+$ with $\|\psi\|_w:=\|\psi\|_\infty+\|y\overbar{\nabla}\psi\|_\infty<\infty$. Then,
\begin{align}\label{68}
    \big|E_{rel}[\Gamma,\Sigma,\psi]\big|\leq C\Big(\big|E_{rel}[\Gamma,\Sigma]\big|+1\Big)\|\psi\|_w,
\end{align}
where $C$ is independent of $\Gamma=\partial^*U$ and $\psi$.
\end{prop}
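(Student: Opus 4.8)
The goal is to upgrade Lemma \ref{lip lem}, which controls the cut-off quantity $E_{rel}[\Gamma,\Sigma;\psi\phi_{y_1,y_2,\delta}]$, into a genuine statement about the limiting quantity $E_{rel}[\Gamma,\Sigma;\psi]$ under the hypothesis that $E_{rel}[\Gamma,\Sigma]$ is finite. The plan is to first normalize: by scaling, we may assume $\|\psi\|_w\leq 1$, so that $\|\psi\|_\infty\leq 1$ and $\|y\overbar\nabla\psi\|_\infty\leq 1$; the factor $\|\psi\|_w$ is then reinserted at the end. Next I would establish existence of the limit $(\ref{56})$, i.e. that $E_{rel}[\Gamma,\Sigma;\psi\chi_{\{y\geq\epsilon\}}]$ converges as $\epsilon\to0^+$. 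For this I would run the divergence-theorem argument of Theorem \ref{relative entropy}/Lemma \ref{lip lem} on the annular region $\{y_1\leq y\leq y_2\}$ to show the Cauchy-type estimate
\begin{equation*}
    \big|E_{rel}[\Gamma,\Sigma;\psi\chi_{\{y\geq y_1\}}]-E_{rel}[\Gamma,\Sigma;\psi\chi_{\{y\geq y_2\}}]\big|\leq C y_2 + \big|E_{rel}[\Gamma,\Sigma;\chi_{\{y_1\leq y< y_2\}}]\big|
\end{equation*}
for $0<y_1<y_2<\epsilon$, where the last term tends to $0$ as $y_1,y_2\to0$ precisely because $E_{rel}[\Gamma,\Sigma]$ is finite (this uses the expansion / the fact that $E_{rel}[\Gamma,\Sigma;\chi_{\{y\geq t\}}]$ has a limit, hence is Cauchy). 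Letting $\delta\to0^+$ in $(\ref{vari for lip})$ via dominated convergence (legitimate since $\partial^*U$ has locally finite perimeter and $\Sigma$ locally finite area) converts the cut-off bound into $\big|E_{rel}[\Gamma,\Sigma;\psi\chi_{\{y_1\leq y<y_2\}}]\big|\leq \big|E_{rel}[\Gamma,\Sigma;\chi_{\{y_1\leq y< y_2\}}]\big|+Cy_2$, which feeds the Cauchy estimate above.

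Once the limit exists, the bound $(\ref{68})$ follows by decomposing $E_{rel}[\Gamma,\Sigma;\psi] = E_{rel}[\Gamma,\Sigma;\psi\chi_{\{y\geq t\}}] + E_{rel}[\Gamma,\Sigma;\psi\chi_{\{y<t\}}]$ for a fixed small $t<\epsilon$. The first term is bounded by $\|\psi\|_\infty$ times the total mass of $\partial^*U$ and $\Sigma$ in the compact region $\{y\geq t\}\cap\mathrm{cl}(\Omega)$, a constant depending only on $\Sigma$, $\Omega$, $t$, and $|E_{rel}[\Gamma,\Sigma]|$ (since $\mathcal H^n_\mathbb{H}(\partial^*U\cap\{y\geq t\}) \leq \mathcal H^n_\mathbb{H}(\Sigma\cap\{y\geq t\}) + |E_{rel}[\Gamma,\Sigma;\chi_{\{y\geq t\}}]|$ and the latter is controlled by $|E_{rel}[\Gamma,\Sigma]|+Ct$). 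The second, near-boundary term is handled by taking $\delta\to0$ and then $y_1\to0$ in Lemma \ref{lip lem} with $y_2=t$: this gives $|E_{rel}[\Gamma,\Sigma;\psi\chi_{\{y<t\}}]| = |E_{rel}[\Gamma,\Sigma;\psi] - E_{rel}[\Gamma,\Sigma;\psi\chi_{\{y\geq t\}}]|\leq |E_{rel}[\Gamma,\Sigma;\chi_{\{y<t\}}]| + Ct$, and again $|E_{rel}[\Gamma,\Sigma;\chi_{\{y<t\}}]|\leq |E_{rel}[\Gamma,\Sigma]| + |E_{rel}[\Gamma,\Sigma;\chi_{\{y\geq t\}}]| \leq 2|E_{rel}[\Gamma,\Sigma]| + Ct$. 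Summing the two contributions and reinserting $\|\psi\|_w$ gives $(\ref{68})$ with $C$ depending only on $\Sigma$ and $\Omega$ (after fixing, say, $t=\epsilon/2$).

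The main obstacle is bookkeeping the passage between the cut-off quantities and the limits cleanly: Lemma \ref{lip lem} is phrased with the double cut-off $\phi_{y_1,y_2,\delta}$ and the bound $Cy_2$, and one must be careful that when $y_1\to0$ with $y_2=t$ fixed the error $Cy_2=Ct$ does \emph{not} vanish, so it must be absorbed into the final constant rather than sent to zero — this is why the hypothesis only yields a bound of the form $C(|E_{rel}[\Gamma,\Sigma]|+1)$ and not something sharper. A secondary technical point is verifying that the dominated-convergence steps and the co-area estimate $\mathcal H^n_\mathbb{H}(\Omega\cap\{y=y_0\})\leq Cy_0$ from $(\ref{42})$ genuinely force $E_{rel}[\Gamma,\Sigma;\chi_{\{y_1\leq y< y_2\}}]\to0$; this is where finiteness of $E_{rel}[\Gamma,\Sigma]$ is essential, since it is equivalent to the net $t\mapsto E_{rel}[\Gamma,\Sigma;\chi_{\{y\geq t\}}]$ being Cauchy as $t\to0^+$.
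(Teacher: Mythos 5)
Your proposal is correct and follows essentially the same route as the paper's proof: let $\delta\to0$ in Lemma \ref{lip lem}, invoke finiteness of $E_{rel}[\Gamma,\Sigma]$ and Cauchy's criterion to obtain existence of the limit, then split off a fixed compact region $\{y\geq t\}$ (bounded by $\mathcal{H}^n_\mathbb{H}(\Sigma\cap\{y\geq t\})$ plus $|E_{rel}[\Gamma,\Sigma]|$ via the almost-monotonicity (\ref{67})) and control the near-boundary tail by the same lemma, absorbing the non-vanishing $Ct$ error into the constant. The only cosmetic imprecision is your claim $|E_{rel}[\Gamma,\Sigma;\chi_{\{y\geq t\}}]|\leq|E_{rel}[\Gamma,\Sigma]|+Ct$, whose negative side actually requires the additional $\Gamma$-independent constant $\mathcal{H}^n_\mathbb{H}(\Sigma\cap\{y\geq t\})$ — harmless, since it is absorbed into the ``$+1$'' exactly as in the paper.
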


\begin{proof}
For simplicity, we assume that $\|\psi\|_w=1$ if $\psi$ is nonzero. Otherwise, the conclusion follows trivially. In Lemma \ref{lip lem}, by letting $\delta\to0^+$, we get
\begin{align}
    \Big|E_{rel}[\Gamma,\Sigma;\psi\chi_{\{y\geq y_1\}}]-E_{rel}[\Gamma,\Sigma;\psi\chi_{\{y\geq y_2\}}]\Big|\leq Cy_2+\Big|E_{rel}[\Gamma,\Sigma;\chi_{\{y\geq y_1\}}]-E_{rel}[\Gamma,\Sigma;\chi_{\{y\geq y_2\}}]\Big|.\label{1}
\end{align}
Since $E_{rel}[\Gamma,\Sigma]=\lim_{\epsilon\to0^+}E_{rel}[\Gamma,\Sigma;\chi_{\{y\geq \epsilon\}}]$ exists and is a finite number, the right hand side of (\ref{1}) can be made arbitrarily small. So, Cauchy's criterion implies that
\begin{align*}
    E_{rel}[\Gamma,\Sigma;\psi]=\lim_{\epsilon\to0^+}E_{rel}[\Gamma,\Sigma;\psi\chi_{\{y\geq \epsilon\}}]
\end{align*}
is well-defined and is finite valued. Taking $r=r_1$ in (\ref{67}), we see that
\begin{equation*}
    E_{rel}[\Gamma,\Sigma;\chi_{\{y\geq y_1\}}]\geq E_{rel}[\Gamma,\Sigma;\chi_{\{y\geq y_2\}}]-Cy_2
\end{equation*}
holds for $y_1<y_2<\epsilon$. We choose $y_1<\epsilon$ so small that $\big|E_{rel}[\Gamma,\Sigma;\chi_{\{y\geq y_1\}}]\big|\leq 2\big|E_{rel}[\Gamma,\Sigma]\big|$. Note that $y_1$ may depend on $\Gamma$. However, for any $y_2<\epsilon$,
\begin{equation*}
    E_{rel}[\Gamma,\Sigma;\chi_{\{y\geq y_2\}}]\leq Cy_2+E_{rel}[\Gamma,\Sigma;\chi_{\{y\geq y_1\}}]\leq Cy_2+2\big|E_{rel}[\Gamma,\Sigma]\big|.
\end{equation*}
Letting $y_1\to0$ in (\ref{1}), we get
\begin{align}
    \big|E_{rel}[\Gamma,\Sigma;\psi]\big|&\leq Cy_2+\Big|E_{rel}[\Gamma,\Sigma]-E_{rel}[\Gamma,\Sigma;\chi_{\{y\geq y_2\}}]\Big|+|E_{rel}[\Gamma,\Sigma;\psi\chi_{\{y\geq y_2\}}]|\label{2}\\
    & \leq Cy_2+3|E_{rel}[\Gamma,\Sigma]|+|E_{rel}[\Gamma,\Sigma;\psi\chi_{\{y\geq y_2\}}]|.\nonumber
\end{align}
Notice that
\begin{align}
    |E_{rel}[\Gamma,\Sigma;\psi\chi_{\{y\geq y_2\}}]|=&\Big|\int_{\Gamma\cap\{y\geq y_2\}}\psi d\mathcal{H}^n_{\mathbb{H}}-\int_{\Sigma\cap\{y\geq y_2\}}\psi d\mathcal{H}^n_{\mathbb{H}}\Big|\\
    \leq&\max\Big\{\big|\int_{\Gamma\cap\{y\geq y_2\}}\psi d\mathcal{H}^n_{\mathbb{H}}\big|,\big|\int_{\Sigma\cap\{y\geq y_2\}}\psi d\mathcal{H}^n_{\mathbb{H}}\big|\Big\}\\
    \leq&\mathcal{H}^n_{\mathbb{H}}(\Sigma\cap\{y\geq y_2\})+|E_{rel}[\Gamma,\Sigma;\chi_{\{y\geq y_2\}}|\\
    \leq&2\mathcal{H}^n_{\mathbb{H}}(\Sigma\cap\{y\geq y_2\})+2|E_{rel}[\Gamma,\Sigma]|.\label{3}
\end{align}
Since $y_2<\epsilon$ is fixed and $\mathcal{H}^n_{\mathbb{H}}(\Sigma\cap\{y\geq y_2\})<\infty$, combining (\ref{2})-(\ref{3}) gives the desired upper bound. Since $y_2$ is independent of $\Gamma$, so is the constant $C$ in (\ref{68}).
\end{proof}

Now, we follow \cite{BWRelativeEntropy} to establish the estimate for quadratic forms.

\begin{lem}\label{69}
Let $\mathbf{Y}_1$ and $\mathbf{Y}_2$ be two vector fields in $\mathbb{R}^{n+1}_+$ with
\begin{equation*}
    \|\mathbf{Y}_i\|_\infty+\|y\overbar{\nabla}\mathbf{Y}_i\|_\infty\leq1.
\end{equation*}
Then, for $0<\frac{y_1}{2}<y_1-\delta<y_1<y_2<y_2+\delta<2y_2<\epsilon$, we have
\begin{equation}
    |E_{rel}[\Gamma,\Sigma;\phi_{y_1,y_2,\delta}(\mathbf{Y}_1\cdot\mathbf{v})(\mathbf{Y}_2\cdot\mathbf{v})]|\leq C(y_2+|E_{rel}[\Gamma,\Sigma;\phi_{y_1,y_2,\delta}]|).\label{esti for quad}
\end{equation}
\end{lem}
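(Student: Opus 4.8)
The plan is to treat the quadratic weight $\psi(p,\mathbf{v}):=(\mathbf{Y}_1\cdot\mathbf{v})(\mathbf{Y}_2\cdot\mathbf{v})$ by ``linearizing'' the bilinear form in the normal direction around the normal field of $\Sigma$, reducing it to a combination of fluxes of ambient vector fields (handled by the divergence theorem just as in the proof of Theorem \ref{relative entropy}) and one $\mathbf{v}$-independent Lipschitz weight (handled by Lemma \ref{lip lem}). Throughout I would use the boundary defining function $r=y$ fixed in Section 4, so that on $\Omega$ the metric $r^2g_P$ is Euclidean; then $\overbar{\mathbf{n}}_\Sigma$ and $\overbar{\mathbf{n}}_\Gamma$ are the Euclidean unit normals of $\Sigma$ and of $\Gamma=\partial^*U_\Gamma$, $\mathbf{v}|_\Gamma=\overbar{\mathbf{n}}_\Gamma$, $d\mathcal{H}^n_\mathbb{H}=y^{-n}d\mathcal{H}^n$, and $\overbar{\nabla}$, $\overbar{\mathrm{div}}$ are Euclidean. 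I orient $\overbar{\mathbf{n}}_\Sigma,\overbar{\mathbf{n}}_\Gamma$ outward from $U_\Sigma,U_\Gamma$; since $\psi$ is even in $\mathbf{v}$ this is harmless. Put $\mathbf{N}:=\overbar{\mathbf{n}}_\Sigma\circ\Pi$, a $C^1$ unit vector field on $\mathrm{cl}(\Omega)\cap\{y<\epsilon\}$ restricting to $\overbar{\mathbf{n}}_\Sigma$ on $\Sigma$; since $\Sigma$ meets $M$ orthogonally one has $|\mathbf{N}^{(n+1)}|\leq Cy$, exactly as in $(\ref{46})$.

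The first step is the elementary pointwise identity, valid for all unit vectors $\mathbf{n}$ and all vectors:
\[(\mathbf{Y}_1\cdot\mathbf{n})(\mathbf{Y}_2\cdot\mathbf{n})=(\mathbf{Y}_1\cdot\mathbf{N})(\mathbf{Y}_2\cdot\mathbf{n})+(\mathbf{Y}_1\cdot\mathbf{n})(\mathbf{Y}_2\cdot\mathbf{N})-(\mathbf{Y}_1\cdot\mathbf{N})(\mathbf{Y}_2\cdot\mathbf{N})+\big(\mathbf{Y}_1\cdot(\mathbf{n}-\mathbf{N})\big)\big(\mathbf{Y}_2\cdot(\mathbf{n}-\mathbf{N})\big).\]
Evaluating with $\mathbf{n}=\overbar{\mathbf{n}}_\Gamma$ on $\Gamma$, multiplying by $\phi_{y_1,y_2,\delta}\,y^{-n}$ and integrating $d\mathcal{H}^n$ over $\Gamma$, the first term becomes the Euclidean flux through $\Gamma$ of $\mathbf{W}:=\phi_{y_1,y_2,\delta}\,y^{-n}(\mathbf{Y}_1\cdot\mathbf{N})\mathbf{Y}_2$, the second the flux of $\mathbf{W}':=\phi_{y_1,y_2,\delta}\,y^{-n}(\mathbf{Y}_2\cdot\mathbf{N})\mathbf{Y}_1$, and the third equals $-\int_\Gamma\phi_{y_1,y_2,\delta}\,\widetilde{\psi}\,d\mathcal{H}^n_\mathbb{H}$ with $\widetilde{\psi}:=(\mathbf{Y}_1\cdot\mathbf{N})(\mathbf{Y}_2\cdot\mathbf{N})$ locally Lipschitz and independent of $\mathbf{v}$. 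Here $\mathbf{W},\mathbf{W}'$ are bounded Lipschitz fields supported in $\{y_1-\delta\leq y\leq y_2+\delta\}$, hence compactly supported in $\mathbb{R}^{n+1}_+$. The same identity on $\Sigma$, where $\mathbf{n}=\overbar{\mathbf{n}}_\Sigma=\mathbf{N}$ and the last term vanishes, shows that the fluxes of $\mathbf{W},\mathbf{W}'$ through $\Sigma$ and $\int_\Sigma\phi_{y_1,y_2,\delta}\widetilde{\psi}\,d\mathcal{H}^n_\mathbb{H}$ all equal $\int_\Sigma\phi_{y_1,y_2,\delta}\psi\,d\mathcal{H}^n_\mathbb{H}$.

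Next I would estimate the bulk terms. Applying the divergence theorem to $\mathbf{W}$ over $U_\Gamma$ and over $U_\Sigma$ and subtracting (the overlap cancels) gives $\big|\int_\Gamma\mathbf{W}\cdot\overbar{\mathbf{n}}_\Gamma\,d\mathcal{H}^n-\int_\Sigma\mathbf{W}\cdot\overbar{\mathbf{n}}_\Sigma\,d\mathcal{H}^n\big|\leq 2\int_\Omega|\overbar{\mathrm{div}}\,\mathbf{W}|\,d\mathcal{H}^{n+1}$; expanding $\overbar{\mathrm{div}}\,\mathbf{W}=\mathbf{Y}_2\cdot\overbar{\nabla}\big(\phi_{y_1,y_2,\delta}y^{-n}(\mathbf{Y}_1\cdot\mathbf{N})\big)+\phi_{y_1,y_2,\delta}y^{-n}(\mathbf{Y}_1\cdot\mathbf{N})\,\overbar{\mathrm{div}}\,\mathbf{Y}_2$ and using $|\mathbf{Y}_i|\leq1$, $|\overbar{\nabla}\mathbf{Y}_i|\leq Cy^{-1}$, $|\overbar{\nabla}\mathbf{N}|\leq C$, $|\phi'_{y_1,y_2,\delta}|\leq C\delta^{-1}$ together with the thin-neighborhood bound $\mathcal{H}^n(\Omega\cap\{y=s\})\leq Cs^{n+1}$ of $(\ref{42})$, the co-area formula yields $\int_\Omega|\overbar{\mathrm{div}}\,\mathbf{W}|\,d\mathcal{H}^{n+1}\leq Cy_2$, in complete analogy with $(\ref{61})$--$(\ref{62})$; likewise for $\mathbf{W}'$. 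Also $\widetilde{\psi}/C_0$ is admissible in the sense of Lemma \ref{lip lem} for some $C_0=C_0(\Sigma)$, so $\big|E_{rel}[\Gamma,\Sigma;\phi_{y_1,y_2,\delta}\widetilde{\psi}]\big|\leq C\big(\big|E_{rel}[\Gamma,\Sigma;\phi_{y_1,y_2,\delta}]\big|+y_2\big)$. Summing the four terms and subtracting $\int_\Sigma\phi_{y_1,y_2,\delta}\psi\,d\mathcal{H}^n_\mathbb{H}$ to form $E_{rel}$, the several copies of $\int_\Sigma\phi_{y_1,y_2,\delta}\psi\,d\mathcal{H}^n_\mathbb{H}$ all cancel and one is left with
\[E_{rel}[\Gamma,\Sigma;\phi_{y_1,y_2,\delta}\psi]=\int_\Gamma\phi_{y_1,y_2,\delta}\,y^{-n}\big(\mathbf{Y}_1\cdot(\overbar{\mathbf{n}}_\Gamma-\mathbf{N})\big)\big(\mathbf{Y}_2\cdot(\overbar{\mathbf{n}}_\Gamma-\mathbf{N})\big)\,d\mathcal{H}^n+R,\qquad |R|\leq C\big(\big|E_{rel}[\Gamma,\Sigma;\phi_{y_1,y_2,\delta}]\big|+y_2\big).\]

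The remaining, and hardest, step is the ``genuinely quadratic'' term. Since $|\mathbf{Y}_i|\leq1$ and $|\overbar{\mathbf{n}}_\Gamma-\mathbf{N}|^2=2(1-\overbar{\mathbf{n}}_\Gamma\cdot\mathbf{N})$, its absolute value is at most $2\int_\Gamma\phi_{y_1,y_2,\delta}\,d\mathcal{H}^n_\mathbb{H}-2\int_\Gamma\phi_{y_1,y_2,\delta}\,y^{-n}(\overbar{\mathbf{n}}_\Gamma\cdot\mathbf{N})\,d\mathcal{H}^n$. The last integral is the Euclidean flux of the fixed Lipschitz field $\phi_{y_1,y_2,\delta}\,y^{-n}\mathbf{N}$ through $\Gamma$; one further application of the divergence theorem — now $|\mathbf{N}^{(n+1)}|\leq Cy$ is exactly what makes $\overbar{\mathrm{div}}(\phi_{y_1,y_2,\delta}y^{-n}\mathbf{N})$ integrable over $\Omega$ with bound $Cy_2$, and $\mathbf{N}|_\Sigma\cdot\overbar{\mathbf{n}}_\Sigma\equiv1$ — shows it equals $\int_\Sigma\phi_{y_1,y_2,\delta}\,d\mathcal{H}^n_\mathbb{H}+O(y_2)$. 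Hence the quadratic term is at most $2E_{rel}[\Gamma,\Sigma;\phi_{y_1,y_2,\delta}]+O(y_2)$ in absolute value, and $(\ref{esti for quad})$ follows. The essential difficulty is precisely this last point: a quadratic weight in $\mathbf{v}$ genuinely depends on the tangent planes of $\Gamma$, over which we have no a priori control, so no single ambient comparison field can match it; the payoff of the linearization is that the residual discrepancy is exactly the $L^2$-excess $\int_\Gamma\phi_{y_1,y_2,\delta}\,y^{-n}|\overbar{\mathbf{n}}_\Gamma-\overbar{\mathbf{n}}_\Sigma\circ\Pi|^2\,d\mathcal{H}^n$, which the divergence theorem identifies, up to an $O(y_2)$ error, with $2E_{rel}[\Gamma,\Sigma;\phi_{y_1,y_2,\delta}]$, while the linearization is arranged so that the individually large quantity $\int_\Sigma\phi_{y_1,y_2,\delta}\psi\,d\mathcal{H}^n_\mathbb{H}$ cancels; all other estimates are of the type already carried out in Section 2 and in the proof of Theorem \ref{relative entropy}.
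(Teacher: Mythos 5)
Your proof is correct, but it takes a genuinely different route from the paper's. The paper first derives only the one-sided bound (\ref{5}) for squares $(\mathbf{Y}\cdot\mathbf{v})^2$: it applies the divergence theorem to $(\mathbf{X}\cdot\mathbf{Y})\mathbf{Y}$ and then discards the nonnegative excess via $(\mathbf{Y}\cdot\overbar{\mathbf{X}})(\mathbf{Y}\cdot\overbar{\mathbf{n}}_\Gamma)\leq\frac{1}{2}(\mathbf{Y}\cdot\overbar{\mathbf{X}})^2+\frac{1}{2}(\mathbf{Y}\cdot\overbar{\mathbf{n}}_\Gamma)^2$; the matching upper bound is then obtained indirectly through the coordinate identity $\big((Y_i\overbar{\mathbf{e}}_i)\cdot\mathbf{v}\big)^2=Y_i^2-\sum_{m\neq i}\big((Y_i\overbar{\mathbf{e}}_m)\cdot\mathbf{v}\big)^2$, an induction over sums of vector fields, and finally polarization. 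You instead keep the excess: your pointwise identity is exactly the exact (polarized) version of that AM--GM step around $\mathbf{N}=\overbar{\mathbf{X}}=\overbar{\mathbf{n}}_\Sigma\circ\Pi$, so that $E_{rel}[\Gamma,\Sigma;\phi_{y_1,y_2,\delta}(\mathbf{Y}_1\cdot\mathbf{v})(\mathbf{Y}_2\cdot\mathbf{v})]$ splits into two flux differences of ambient Lipschitz fields (bounded by $Cy_2$ exactly as in (\ref{61})--(\ref{62}), using the slice bound (\ref{42})), a $\mathbf{v}$-independent Lipschitz weight handled by Lemma \ref{lip lem}, and the normal excess $\int_\Gamma\phi_{y_1,y_2,\delta}\,y^{-n}\big(\mathbf{Y}_1\cdot(\overbar{\mathbf{n}}_\Gamma-\overbar{\mathbf{X}})\big)\big(\mathbf{Y}_2\cdot(\overbar{\mathbf{n}}_\Gamma-\overbar{\mathbf{X}})\big)\,d\mathcal{H}^n$, which you dominate by $2\int_\Gamma\phi_{y_1,y_2,\delta}\big(1-\overbar{\mathbf{n}}_\Gamma\cdot\overbar{\mathbf{X}}\big)\,d\mathcal{H}^n_{\mathbb{H}}\leq 2E_{rel}[\Gamma,\Sigma;\phi_{y_1,y_2,\delta}]+Cy_2$ by one further application of the calibration/divergence argument of Theorem \ref{relative entropy} (here the orthogonality bound $|X^{(n+1)}|\leq Cy^2$ from (\ref{46}) enters, as you note). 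This yields both signs of (\ref{esti for quad}) in one stroke and avoids the coordinate decomposition and induction entirely, and the ingredients you use ($|\overbar{\nabla}(\overbar{\mathbf{n}}_\Sigma\circ\Pi)|\leq C$, the thin-neighborhood bound, the Gauss--Green formula for Caccioppoli sets) are exactly those already established in Section 2 and used in the paper's own proofs. What the paper's longer route buys is that every intermediate quantity is itself a weighted relative entropy of an even weight, so only Lemma \ref{lip lem} and the square estimate are ever invoked; your route introduces the non-even intermediate integrands $(\mathbf{Y}_i\cdot\overbar{\mathbf{X}})(\mathbf{Y}_j\cdot\overbar{\mathbf{n}}_\Gamma)$, which is harmless because you fix $\overbar{\mathbf{n}}_\Gamma$ to be the measure-theoretic outer normal of $U_\Gamma$ and apply the divergence theorem with that orientation, but this should be said explicitly, as should the (routine) Lipschitz extension of the $\mathbf{N}$-dependent weights off $\mathrm{cl}(\Omega)\cap\{y<\epsilon\}$ before invoking Lemma \ref{lip lem}.
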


\begin{proof}
Let $\textbf{Y}$ be a locally Lipschitz vector field on $\mathbb{R}^{n+1}_+$ with $\|\textbf{Y}\|_\infty+\|y\overbar{\nabla}\textbf{Y}\|_\infty\leq1$. We first show that Lemma \ref{69} holds for $\textbf{Y}_1=\textbf{Y}_2=\textbf{Y}$. By a direct computation,
\begin{equation*}
    |(\textbf{Y}\cdot\textbf{X})\textbf{Y}^{(n+1)}|\leq Cy,\ \left|\overbar{\text{div}}\Big((\textbf{Y}\cdot\textbf{X})\textbf{Y}\Big)\right|\leq C.
\end{equation*}
Applying the divergence theorem to the vector field $(\textbf{X}\cdot\textbf{Y})\textbf{Y}$ in a similar way as in the proof of Theorem \ref{relative entropy},
\begin{align*}
    &\int_{\Gamma}\phi_{y_1,y_2,\delta}(\textbf{Y}\cdot\textbf{X})\left\langle\textbf{Y},\textbf{n}_\Gamma\right\rangle d\mathcal{H}^n_{\mathbb{H}^{n+1}}- \int_{\Sigma}\phi_{y_1,y_2,\delta}(\textbf{Y}\cdot\textbf{X})\left\langle\textbf{Y},\textbf{n}_{\Sigma}\right\rangle d\mathcal{H}^n_{\mathbb{H}^{n+1}}\\
    &\geq -2\int_{\Omega}\left|\text{div}\Big(\phi_{y_1,y_2,\delta}(\textbf{X}\cdot\textbf{Y})\textbf{Y}\Big)\right|d\mathcal{H}^{n+1}_{\mathbb{H}^{n+1}}\\
    &\geq -C y_2.
\end{align*}
Using the inequality $(\textbf{Y}\cdot\textbf{X})\left\langle\textbf{Y},\textbf{n}_\Gamma\right\rangle=(\textbf{Y}\cdot\overbar{\textbf{X}})(\textbf{Y}\cdot \overbar{\textbf{n}}_\Gamma)\leq\frac{1}{2}(\textbf{Y}\cdot\overbar{\textbf{X}})^2+\frac{1}{2}(\textbf{Y}\cdot\overbar{\textbf{n}}_\Gamma)^2$ and noticing that $\textbf{Y}\cdot\overbar{\textbf{X}}=\textbf{Y}\cdot\overbar{\textbf{n}}_\Gamma$ over $\Sigma\cap\{y\leq\epsilon\}$, one gets
\begin{align}
    &\int_{\Gamma}\phi_{y_1,y_2,\delta}(\textbf{Y}\cdot\textbf{X})\left\langle\textbf{Y},\textbf{n}_\Gamma\right\rangle d\mathcal{H}^n_{\mathbb{H}^{n+1}}- \int_{\Sigma}\phi_{y_1,y_2,\delta}(\textbf{Y}\cdot\textbf{X})\left\langle\textbf{Y},\textbf{n}_{\Sigma}\right\rangle d\mathcal{H}^n_{\mathbb{H}^{n+1}}\\
    \leq& \int_{\Gamma}\phi_{y_1,y_2,\delta}\left(\frac{1}{2}(\textbf{Y}\cdot\overbar{\textbf{X}})^2+\frac{1}{2}(\textbf{Y}\cdot\overbar{\textbf{n}}_\Gamma)^2\right) d\mathcal{H}^n_{\mathbb{H}^{n+1}}- \int_{\Sigma}\phi_{y_1,y_2,\delta}\left(\frac{1}{2}(\textbf{Y}\cdot\overbar{\textbf{X}})^2+\frac{1}{2}(\textbf{Y}\cdot\overbar{\textbf{n}}_{\Sigma})^2\right) d\mathcal{H}^n_{\mathbb{H}^{n+1}}\\
    =&E_{rel}[\Gamma,\Sigma;\frac{1}{2}\phi_{y_1,y_2,\delta}(\textbf{Y}\cdot\overbar{\textbf{X}})^2]+E_{rel}[\Gamma,\Sigma;\frac{1}{2}\phi_{y_1,y_2,\delta}(\textbf{Y}\cdot\textbf{v})^2].\label{70}
\end{align}
Notice that $\frac{1}{2}(\textbf{Y}\cdot\overbar{\textbf{X}})^2$ satisfies
\begin{equation*}
    \|(\textbf{Y}\cdot\overbar{\textbf{X}})^2\|_\infty+\|y\overbar{\nabla}\big(\textbf{Y}\cdot\overbar{\textbf{X}}\big)^2\|_\infty\leq C.
\end{equation*}
So, applying Lemma \ref{lip lem} to the first term in (\ref{70}), it is bounded by $Cy_2+C|E_{rel}[\Gamma,\Sigma;\phi_{y_1,y_2,\delta}]|$. Hence,
\begin{equation}
    E_{rel}[\Gamma,\Sigma;\phi_{y_1,y_2,\delta}(\textbf{Y}\cdot\textbf{v})^2]\geq -Cy_2-C|E_{rel}[\Gamma,\Sigma;\phi_{y_1,y_2,\delta}]|.\label{5}
\end{equation}
If we are given two vector fields $\textbf{Y}_1$ and $\textbf{Y}_2$ with
\begin{align*}
    \|\textbf{Y}_i\|_\infty+\|y\overbar{\nabla}\textbf{Y}_i\|_\infty\leq1\text{ for }i=1,2,
\end{align*}
then, using (\ref{5}) we have
\begin{align*}
    &E_{rel}[\Gamma,\Sigma;\phi_{y_1,y_2,\delta}\big((\textbf{Y}_1+\textbf{Y}_2)\cdot\textbf{v}\big)^2]\\
    =&2\sum_{i=1}^2 E_{rel}[\Gamma,\Sigma;\phi_{y_1,y_2,\delta}(\textbf{Y}_1\cdot\textbf{v})^2]-4E_{rel}[\Gamma,\Sigma;\phi_{y_1,y_2,\delta}\big(\frac{\textbf{Y}_1-\textbf{Y}_2}{2}\cdot\textbf{v}\big)^2]\\
    \leq& 2\sum_{i=1}^2 E_{rel}[\Gamma,\Sigma;\phi_{y_1,y_2,\delta}(\textbf{Y}_1\cdot\textbf{v})^2]+4\Big(Cy_2+C|E_{rel}[\Gamma,\Sigma;\phi_{y_1,y_2,\delta}]|\Big).
\end{align*}
Now, assume $\textbf{Y}_i$($i=1,\cdots,k$) are vector fields with $\|\textbf{Y}_i\|_\infty+\|y\overbar{\nabla}\textbf{Y}_i\|_\infty\leq1$. By induction, it is immediate that
\begin{align}
     E_{rel}[\Gamma,\Sigma;\phi_{y_1,y_2,\delta}\big((\sum_{i=1}^k\textbf{Y}_i)\cdot\textbf{v}\big)^2]\leq2^k\sum_{i=1}^k E_{rel}[\Gamma,\Sigma;\phi_{y_1,y_2,\delta}\big(\textbf{Y}_i\cdot\textbf{v}\big)^2]+C_k(|E_{rel}[\Gamma,\Sigma;\phi_{y_1,y_2,\delta}]|+y_2).\label{6}
\end{align}
If we write $\textbf{Y}=\sum_{i=1}^{n+1}Y_i\overbar{\textbf{e}}_i$, then
\begin{equation*}
    \big((Y_i\overbar{\textbf{e}}_i)\cdot \textbf{v}\big)^2=Y_i^2-\sum_{m\neq i}\big((Y_i\overbar{\textbf{e}}_m)\cdot\textbf{v}\big)^2.
\end{equation*}
If we take $\psi=Y_i^2$ in Lemma \ref{lip lem} and use (\ref{5}) with $(\textbf{Y}\cdot\textbf{v})^2$ replaced by $((Y_i\textbf{e}_m)\cdot \textbf{v})^2$, then
\begin{align*}
    E_{rel}[\Gamma,\Sigma;\phi_{y_1,y_2,\delta}\big((Y_i\overbar{\textbf{e}}_i)\cdot \textbf{v}\big)^2]\leq C(y_2+|E_{rel}[\Gamma,\Sigma;\phi_{y_1,y_2,\delta}]|).
\end{align*}
Together with (\ref{6}), we get
\begin{align*}
    E_{rel}[\Gamma,\Sigma;\phi_{y_1,y_2,\delta}\big(\textbf{Y}\cdot \textbf{v}\big)^2]=E_{rel}[\Gamma,\Sigma;\phi_{y_1,y_2,\delta}\big((\sum_{i=1}^{n+1}Y_i\overbar{\textbf{e}}_i)\cdot \textbf{v}\big)^2]\leq C(y_2+|E_{rel}[\Gamma,\Sigma;\phi_{y_1,y_2,\delta}]|).
\end{align*}
Combining this with (\ref{5}), we see that
\begin{align*}
    \big|E_{rel}[\Gamma,\Sigma;\phi_{y_1,y_2,\delta}\big(\textbf{Y}\cdot \textbf{v}\big)^2]\big|\leq C(y_2+\big|E_{rel}[\Gamma,\Sigma;\phi_{y_1,y_2,\delta}]\big|).
\end{align*}
Then, (\ref{esti for quad}) follows from the following observation:
\begin{equation*}
    (\textbf{Y}_1\cdot\textbf{v})(\textbf{Y}_2\cdot\textbf{v})=\left(\frac{\textbf{Y}_1+\textbf{Y}_2}{2}\cdot\textbf{v}\right)^2-\left(\frac{\textbf{Y}_1-\textbf{Y}_2}{2}\cdot\textbf{v}\right)^2.
\end{equation*}
\end{proof}

Now, we discuss the quadratic approximation for a general weight $\psi\in\mathfrak{X}$ and establish the estimate for the error term. Fix a small $\epsilon>0$. Let $\overbar{\psi}(p,\textbf{v}):=\psi(p,\textbf{v})-(1-\phi_{\frac{\epsilon}{2},\epsilon})(\textbf{Y}_\psi(p)\cdot\textbf{v})(\overbar{\textbf{X}}(p)\cdot\textbf{v})$, where $\textbf{Y}_\psi(p):=(\nabla_{\mathbb{S}^n}\psi)(p,\overbar{\textbf{X}}(p))$. Notice that if $\epsilon$ is sufficiently small, then $\overbar{\textbf{X}}$ is well-defined. Fix a point $p$ in $\{y<\frac{\epsilon}{2}\}$. Notice that $\nabla_{\mathbb{S}^n}\overbar{\psi}(p,\textbf{v})\big|_{\textbf{v}=\overbar{\textbf{X}}(p)}=0$. Let $\gamma(t)$ ($t\in[0,\ell]$) be a geodesic on $\mathbb{S}^n$ connecting $\overbar{\textbf{X}}(p)$ and $\textbf{v}$, where $\ell$ is the length of $\gamma$. If we let $\varphi(t)=\overbar{\psi}(p,\gamma(t))$, then
\begin{align*}
    \varphi'(0)=\nabla_{\mathbb{S}^n}\overbar{
    \psi}\big|_{\textbf{v}=\overbar{\textbf{X}}(p)}\cdot\gamma'(0)=0.
\end{align*}
Hence,
\begin{align}
    |\overbar{\psi}(p,\textbf{v})-\overbar{\psi}(p,\overbar{\textbf{X}}(p))|=&|\varphi(\ell)-\varphi(0)|=|\int_0^\ell \varphi'(t)dt|=|\int_0^\ell\int_0^t\varphi''(s)dsdt|\\
    =&|\int_0^\ell\int_0^t\nabla_{\mathbb{S}^n}\nabla_{\mathbb{S}^n}\psi(\gamma'(s),\gamma'(s)) dsdt|\\
    \leq&\|\nabla_{\mathbb{S}^n}\nabla_{\mathbb{S}^n}\psi\|_\infty\ell^2=\|\nabla_{\mathbb{S}^n}\nabla_{\mathbb{S}^n}\psi\|_\infty d^2_{\mathbb{S}^n}(\textbf{v},\overbar{\textbf{X}}(p)).\label{58}
\end{align}
Since $\psi$ is an even function in $\textbf{v}$, so is $\overbar{\psi}$. Thus,
\begin{align}\label{59}
    |\overbar{\psi}(p,\textbf{v})-\overbar{\psi}(p,\overbar{\textbf{X}}(p))|=|\overbar{\psi}(p,-\textbf{v})-\overbar{\psi}(p,\overbar{\textbf{X}}(p))|\leq\|\nabla_{\mathbb{S}^n}\nabla_{\mathbb{S}^n}\psi\|_\infty d^2_{\mathbb{S}^n}(-\textbf{v},\overbar{\textbf{X}}(p)).
\end{align}
Combining (\ref{58}) and (\ref{59}), we get
\begin{align}
    |\overbar{\psi}(p,\textbf{v})-\overbar{\psi}(p,\overbar{\textbf{X}}(p))|\leq& \|\nabla_{\mathbb{S}^n}\nabla_{\mathbb{S}^n}\psi\|_\infty\min\Big\{d^2_{\mathbb{S}^n}(\textbf{v},\overbar{\textbf{X}}(p)),d^2_{\mathbb{S}^n}(-\textbf{v},\overbar{\textbf{X}}(p))\Big\}\nonumber\\
    \leq& C_n\|\nabla_{\mathbb{S}^n}\nabla_{\mathbb{S}^n}\psi\|_\infty\big(1-(\textbf{v}\cdot\overbar{\textbf{X}})^2\big).\label{7}
\end{align}
By a direct computation, we also have
\begin{align}
    &\|\overbar{\psi}(p,\overbar{\textbf{X}}(p))\|_w\leq C(\|\psi\|_\infty+\|y\nabla_{\mathbb{R}^{n+1}_+}\psi\|_\infty+\|\nabla_{\mathbb{S}^n}\psi\|_\infty+\|y\nabla_{\mathbb{R}^{n+1}_+}\nabla_{\mathbb{S}^n}\psi\|_{\infty}),\label{8}\\
    &\|\textbf{Y}_\psi\|_\infty+\|y\nabla_{\mathbb{R}^{n+1}_+}\textbf{Y}_\psi\|_\infty\leq C(\|\nabla_{\mathbb{S}^n}\psi\|_\infty+\|y\nabla_{\mathbb{R}^{n+1}_+}\nabla_{\mathbb{S}^n}\psi\|_\infty+\|\nabla_{\mathbb{S}^n}\nabla_{\mathbb{S}^n}\psi||_\infty).\label{9}
\end{align}

Now, we are ready to prove Theorem \ref{weighted entropy}.
\begin{proof}[Proof of Theorem \ref{weighted entropy}]
Fix $\psi\in\mathfrak{X}$. Without loss of generality, we can assume that $\|\psi\|_{\mathfrak{X}}\leq 1$. Let $\epsilon$ be the same number that appears in the definition of $\overbar{\psi}$. For positive constants $y_1$, $y_2$ and $\delta$ with
\begin{align*}
    0<\frac{y_1}{2}<y_1-\delta<y_1<y_2<y_2+\delta<2y_2<\frac{\epsilon}{2},
\end{align*}
we prove that
\begin{align}
    |E_{rel}[\Gamma,\Sigma;\phi_{y_1,y_2,\delta}\psi]|\leq C(|E_{rel}[\Gamma,\Sigma;\phi_{y_1,y_2,\delta}]|+y_2)\|\psi\|_{\mathfrak{X}}.\label{11}
\end{align}
This gives the existence of $E_{rel}[\Gamma,\Sigma,\psi]$. Observe that $E_{rel}[\Gamma,\Sigma;\phi_{y_1,y_2,\delta}\psi]$ can be written as 
\begin{align*}
    E_{rel}[\Gamma,\Sigma;\phi_{y_1,y_2,\delta}\psi]=&E_{rel}[\Gamma,\Sigma;\phi_{y_1,y_2,\delta}(\textbf{Y}_\psi(p)\cdot\textbf{v})(\overbar{\textbf{X}}(p)\cdot\textbf{v})]+E_{rel}[\Gamma,\Sigma;\phi_{y_1,y_2,\delta}\overbar{\psi}(p,\overbar{\textbf{X}}(p))]\\
    &+E_{rel}[\Gamma,\Sigma;\phi_{y_1,y_2,\delta}\left(\overbar{\psi}(p,\textbf{v})-\overbar{\psi}(p,\overbar{\textbf{X}}(p))\right)].
\end{align*}
By (\ref{esti for quad}) and (\ref{9}),we have
\begin{align*}
    |E_{rel}[\Gamma,\Sigma;\phi_{y_1,y_2,\delta}(\textbf{Y}_\psi(p)\cdot\textbf{v})(\overbar{\textbf{X}}(p)\cdot\textbf{v})]|\leq C(y_2+|E_{rel}[\Gamma,\Sigma;\phi_{y_1,y_2,\delta}]|)\|\psi\|_{\mathfrak{X}}.
\end{align*}
Proposition \ref{4} and (\ref{8}) gives
\begin{align*}
    |E_{rel}[\Gamma,\Sigma;\phi_{y_1,y_2,\delta}\overbar{\psi}(p,\overbar{\textbf{X}}(p))]|\leq C(y_2+|E_{rel}[\Gamma,\Sigma;\phi_{y_1,y_2,\delta}]|)\|\psi\|_{\mathfrak{X}}.
\end{align*}
Finally, by (\ref{7}) and noticing that both $\left(\overbar{\psi}(p,\textbf{v})-\overbar{\psi}(p,\overbar{\textbf{X}}(p))\right)$ and $\left(1-(\textbf{v}\cdot\overbar{\textbf{X}})^2\right)$ vanish on $\Sigma$,
\begin{align*}
    &\big|E_{rel}[\Gamma,\Sigma;\phi_{y_1,y_2,\delta}\left(\overbar{\psi}(p,\textbf{v})-\overbar{\psi}(p,\overbar{\textbf{X}}(p))\right)]\big|=\left|\int_{\Gamma}\phi_{y_1,y_2,\delta}\left(\overbar{\psi}(p,\textbf{v})-\overbar{\psi}(p,\overbar{\textbf{X}}(p))\right)d\mathcal{H}^n_{\mathbb{H}^{n+1}}\right|\\
    &\leq C\|\psi\|_{\mathfrak{X}}\left|\int_{\Gamma}\phi_{y_1,y_2,\delta}\big(1-(\overbar{\textbf{n}}_\Gamma\cdot\overbar{\textbf{X}})^2\big)d\mathcal{H}^n_{\mathbb{H}^{n+1}}\right|=C\|\psi\|_{\mathfrak{X}}\,\big|E_{rel}[\Gamma,\Sigma;\phi_{y_1,y_2,\delta}\big(1-(\textbf{v}\cdot\overbar{\textbf{X}})^2\big)]\big|\\
    &\leq C\|\psi\|_{\mathfrak{X}}(y_2+|E_{rel}[\Gamma,\Sigma;\phi_{y_1,y_2,\delta}]|).
\end{align*}
This proves (\ref{11}). It follows from Cauchy's criterion that $E_{rel}[\Gamma,\Sigma;\psi]$ is well-defined. We use an argument similar to (\ref{2})-(\ref{3}) in the proof of Proposition \ref{4} to conclude that the inequality (\ref{10}) holds.
\end{proof}

\section{Monotonicity Formula for Relative Entropy\label{34}}

In this section, we prove Theorem \ref{mono}. As a starting point, we show that the flow is $C^{1,1}$-asymptotic to the ideal boundary as long as it exists.

\begin{lem}
Let $\{\Sigma_t\}_{t\in[0,T)}$ be a mean curvature flow in $\mathbb{H}^{n+1}$ trapped between $\Gamma'_-$ and $\Gamma'_+$. Assume $\Sigma_0$ is $C^2$-asymptotic to a closed submanifold $M\subset\partial_\infty\mathbb{H}^{n+1}$. Then, for all $t\in[0,T)$, $\Sigma_t$ is $C^{1,1}$-asymptotic to $M$. Moreover, there is a constant $\epsilon_0>0$, so that
\begin{equation}\label{55}
    \sup_{0\leq t<T,\,p\in\Sigma_t\cap\{y\leq\epsilon_0\}}|\overbar{A_{\Sigma_t}}|<\infty.
\end{equation}
\end{lem}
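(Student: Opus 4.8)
The plan is to use the strong maximum principle and interior derivative estimates for mean curvature flow, together with the fact that the flow is trapped between two fixed asymptotically regular barriers $\Gamma'_-$ and $\Gamma'_+$. First I would establish $C^0$-asymptotic control: since $\Sigma_t$ is trapped between $\Gamma'_\pm$, which are both $C^2$-asymptotic and orthogonal to $M$, the slices $\Sigma_t\cap\{y=y_0\}$ are squeezed into a Euclidean tubular neighborhood of $M$ of radius $O(y_0^2)$ near the ideal boundary, uniformly in $t$. This gives that $\Sigma_t\cup M$ is a topological hypersurface in $\overbar{\mathbb{R}^{n+1}_+}$ meeting $\partial_\infty\mathbb{H}^{n+1}$ orthogonally.

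**Upgrading to $C^{1,1}$.** The key point is a rescaling argument. Hyperbolic mean curvature flow is invariant under the parabolic rescalings $(\mathbf{x},y,t)\mapsto(\lambda^{-1}\mathbf{x},\lambda^{-1}y,\lambda^{-2}t)$ composed with hyperbolic isometries; more precisely, for a point $p_0\in\Sigma_{t_0}$ with $y(p_0)=y_0$ small, apply the Möbius transformation sending $p_0$ to height $1$. Because the $C^0$-asymptotic bound forces $\Sigma_t$ near $p_0$ to lie, after rescaling, within a fixed ball in a slab of bounded Euclidean width, and because the rescaled flows all lie between the rescaled barriers $\Gamma'_\pm$ (whose $C^2$ norms are uniformly controlled under these rescalings, since $\Gamma'_\pm$ are $C^2$ up to $M$), one can apply Ecker–Huisken-type interior estimates (or the local curvature estimates for MCF trapped between barriers) to the rescaled flows on a fixed parabolic cylinder. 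This yields a uniform bound on the second fundamental form $|A|$ of the rescaled surface at the rescaled time, hence $|\overbar{A_{\Sigma_t}}(p_0)| = y(p_0)\,|A_{\Sigma_t}(p_0)|\cdot(\text{bounded factor})$ is uniformly bounded, giving \eqref{55} with a uniform $\epsilon_0$. Feeding this back, the bound on $|\overbar{A}|$ together with the orthogonality gives equi-Lipschitz control on the unit normal up to $M$, i.e. $\Sigma_t\cup M$ is $C^{1,1}$.

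**Main obstacle.** The delicate step is obtaining the \emph{uniform-in-$t$} curvature bound all the way down to the ideal boundary, i.e. the validity of \eqref{55} with $\epsilon_0$ and the supremum independent of $t\in[0,T)$. The interior estimate needs a definite amount of backward time on the rescaled cylinder; for $t_0$ close to $0$ this would fail, so one must instead use the initial regularity: since $\Sigma_0$ is $C^2$-asymptotic, $|\overbar{A_{\Sigma_0}}|$ is bounded near $M$, and a short-time barrier/maximum-principle argument propagates this bound forward for a uniform time, after which the interior estimate takes over. Combining the two regimes—small $t$ via the initial data, bounded-away-from-$0$ $t$ via the rescaled interior estimate—gives the claim for all $t\in[0,T)$. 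One must also check that the constant in the trapping hypothesis (the Euclidean width of the region between $\Gamma'_\pm$ at height $y_0$, which is $O(y_0^2)$ by their $C^2$-asymptotic orthogonality) does not degenerate under the rescaling; this is exactly the scaling that makes the argument close, since a width of order $y_0^2$ rescales, at height $1$, to width of order $y_0\to 0$, pinning the blow-up limits to a hyperplane.
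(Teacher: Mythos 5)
Your overall architecture (use the trapping between $\Gamma'_\pm$ for $C^0$ control, move points near the ideal boundary to unit height by hyperbolic isometries, apply interior parabolic estimates there, and treat small times via the regularity of $\Sigma_0$) is close in spirit to the paper's proof, which uses pseudo-locality for the initial curvature control, then writes the flow as a graph and runs scaling-invariant quasilinear estimates on dyadic annuli, and finally a continuation argument to make the graphical region uniform in $t$. However, there is a genuine quantitative gap at the central step. First, a small point: hyperbolic MCF is invariant under the Euclidean dilations $(\mathbf{x},y)\mapsto(\lambda^{-1}\mathbf{x},\lambda^{-1}y)$ \emph{without} rescaling time (dilations are isometries of $\mathbb{H}^{n+1}$), so the ``parabolic rescaling'' $t\mapsto\lambda^{-2}t$ is not a symmetry here. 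More seriously, the conversion $|\overbar{A_{\Sigma_t}}(p_0)|=y(p_0)\,|A_{\Sigma_t}(p_0)|\cdot(\text{bounded factor})$ is backwards: under the conformal change one has, schematically, $|A|_{\mathbb{H}}=\big|\,y\overbar{A}+\overbar{\nu}^{(n+1)}\overbar{g}\,\big|$, so a bound $|A|_{\mathbb{H}}\leq C$ (equivalently, a uniform bound on the second fundamental form of the rescaled surface at unit height) yields only $|\overbar{A}|\leq C/y$. That is exactly what pseudo-locality already gives and is strictly weaker than (\ref{55}) and than $C^{1,1}$ regularity up to $M$. To conclude you need the rescaled curvature at unit height to be of size $O(y_0)$, i.e.\ an estimate that is \emph{linear in the height} of the rescaled graph (which is $O(y_0)$ by the trapping). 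Ecker--Huisken-type interior curvature estimates bound $|A|$ by the scale of the cylinder, not by the $C^0$-norm of the graph, and a compactness argument ``pinning blow-up limits to a hyperplane'' only gives the qualitative statement $|A_{\text{rescaled}}|=o(1)$, hence $|\overbar{A}|=o(1/y)$ — still not (\ref{55}). The paper closes this gap differently: it uses the bound $|u|\leq Cy^2$ for the graph function, the scale invariance of the graphical flow equation, and interior gradient plus Schauder estimates for the quasilinear equation (whose conclusions scale linearly with $\|u\|_{C^0}$) to get $|Du|\leq Cy$ and $|D^2u|\leq C$, which is the statement $|\overbar{A}|\leq C$.

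A second, lesser gap concerns uniformity in $t$ up to $T$. Your two-regime scheme (initial data for small $t$, interior estimates afterwards) is reasonable, but the interior estimates require the flow to be graphical with controlled gradient on a fixed backward cylinder near $\{y=0\}$, and the $C^0$ trapping alone does not supply this; also the ``short-time maximum-principle propagation'' of the curvature bound near the ideal boundary cannot be a straightforward maximum principle for $|A|^2$, since the region $\{y\leq\epsilon_0\}$ is not preserved and its inner boundary must be controlled — this is precisely why the paper invokes pseudo-locality there. Finally, since pseudo-locality a priori produces an $\epsilon_0$ depending on a chosen $T_0<T$, the paper needs an open-closed continuation argument (the set $S$ and $T'=\sup S$) to keep one $\epsilon_0$ working for all $t<T$; some substitute for this step is needed in your argument as well.
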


\begin{proof}
Since $\Sigma_0$ is $C^2$-asymptotic to $M$, we have $|A_{\Sigma_0}|_\mathbb{H}=\mathcal{O}(y)$ as $y\to0$. So, by the pseudo-locality theorem \cite[Theorem 7.5]{ChenPseudolcality}, for any $T_0<T$, there are $\epsilon_0>0$ and $C_0>0$, so that $|A_{\Sigma_t}|_\mathbb{H}(p)\leq C_0$ for all $t\in[0,T_0]$ and $p\in\Sigma_t\cap\{y\leq\epsilon_0\}$. This implies that $|\overbar{A_{\Sigma_t}}|\leq \frac{C}{y}$ for $t\in[0,T_0]$ and $p\in\Sigma_t\cap\{y\leq\epsilon_0\}$.

For $p_0\in\Sigma_t$, consider the ball $B_{\delta y_0}(p_0)$, where $\delta<1$ is a small constant and $y_0=y(p_0)$. We have $|\overbar{A_{\Sigma_t}}|\leq \frac{C}{y_0}$ in $B_{\delta y_0}(p_0)\cap\Sigma_t$. Since $\Omega$ is a thin neighborhood and both $\Gamma'_-$ and $\Gamma'_+$ meet $M$ orthogonally, by shrinking $\epsilon_0$ if necessary, we can assume that $|\overbar{e}_{n+1}\cdot\overbar{n}_{\Sigma_t}|\leq\frac{1}{2}$ and that $\Sigma_t\cap\{y\leq\epsilon_0\}$ is a graph over $M\times(0,\epsilon_0)$. Otherwise, $B_{\delta y_0}(p_0)\cap\Sigma_t$ would intersect the boundary of $\Omega$.

By a suitable transformation, we may assume $\textbf{0}\in M$ and that the unit normal of $M$ at $\textbf{0}$ is equal to $\overbar{e}_{n}=(0,\cdots,0,1,0)$. Thus, for a sufficiently small $\delta>0$, the mean curvature flow $\{\Sigma_t\}_{t\in[0,T_0]}$ in $(-\delta,\delta)^{n}\times(0,\epsilon_0)$ can be described by
\begin{equation}\label{79}
    \Sigma_t\cap\Big((-\delta,\delta)^{n}\times(0,\epsilon_0)\Big)=\{(\textbf{x}',u,y):\,u=u(t,\textbf{x}',y),\,\textbf{x}'\in(-\delta,\delta)^{n-1},\,y\in(0,\epsilon_0)\},
\end{equation}
where $u$ satisfies the equation
\begin{equation}\label{54}
    \partial_tu=y^2\sqrt{1+|Du|^2}\text{div}\left(\frac{Du}{\sqrt{1+|Du|^2}}\right)-ny\partial_yu.
\end{equation}
The equation (\ref{54}) is scaling invariant in the sense that $u_\lambda(t,\textbf{x}',y):=\frac{1}{\lambda}u(t,\lambda\textbf{x}',\lambda y)$ satisfies the same equation. Since $\Sigma_t$ is trapped between $\Gamma'_-$ and $\Gamma'_+$ and $\Sigma$ meets $M$ orthogonally, we have the estimate $|u(t,\textbf{x}',y)|\leq Cy^2$. Let $k\in\mathbb{N}$ and $\lambda_k=\epsilon_0^k$. Then, $u_{\lambda_k}$ solves (\ref{54}) in $[0,T_0]\times(-\frac{\delta}{\lambda_k},\frac{\delta}{\lambda_k})^{n-1}\times[\epsilon_0,1)$. It follows from the gradient estimates for quasi-linear equations that $Du$ is H\"older continuous (see, e.g., \cite[Theorem 6.1.1]{LadyzenskajaLinearQuasil} for a proof). Then, we can apply the standard Schauder theory to $u_{\lambda_k}$ to conclude that, when rescaling back to $u$, $|Du|\leq Cy$ and $|D^2u|\leq C$. Since $T_0$ is arbitrary, this implies each $\Sigma_t$ is $C^{1,1}$-asymptotic to $M$.

Now, fix a specific choice of $T_0<T$ and the corresponding $\epsilon_0$. Let
\begin{equation*}
    S=\big\{s\in(0,T): \Sigma_t\cap\{y\leq\epsilon_0\}\text{ is a graph over }M\times(0,\epsilon_0]\text{ for all }t\in[0,s]\big\}
\end{equation*}
and $T'=\sup S$. Then, $T'\geq T_0$. We claim that $T'=T$. If, on the contrary, $T'<T$, then by definition, $\Sigma_t\cap\{y\leq\epsilon_0\}$ is a graph over $M\times(0,\epsilon_0]$ for $t\in[0,T')$. So, locally we can write $\Sigma_t\cap\Big((-\delta,\delta)^{n}\times(0,\epsilon_0)\Big)$ as a graph of $u$ as in (\ref{79}) for $t\in[0,T)$, where $u$ satisfies the equation in (\ref{54}) in $[0,T)\times\Big((-\delta,\delta)^{n}\times(0,\epsilon_0)$. Then, the estimates on $u$ in the preceding paragraph implies that
\begin{equation*}
    \sup_{t\in[0,T'),p\in\Sigma_t\cap\{y\leq\epsilon_0\}}|A_{\Sigma_t}|<\infty.
\end{equation*}
This implies the graph can be extended to $t=T'$, and hence $T'\in S$. Finally, as $|Du|$ remains bounded for $t\in[0,T']$, we can see that $T'+\epsilon\in S$ for some $\epsilon$. This contradicts our assumption.

So, there is a $\epsilon_0>0$, so that $\Sigma_t\cap\{y\leq\epsilon_0\}$ can be expressed as graphs over $M\times(0,\epsilon_0]$ for all $t<T$. Then, (\ref{55}) follows from the estimates on the equation (\ref{54}).
\end{proof}

\begin{lem}
Let $\widetilde{\Sigma}$ be a hypersurface that is $C^{1,1}$-asymptotic to $M$ and is trapped between $\Gamma'_-$ and $\Gamma'_+$. If $\widetilde{\Sigma}$ satisfies\label{22}
\begin{equation*}
    \sup_{p\in\widetilde{\Sigma}\cap\{y\leq\epsilon_0\}}|\overbar{A}_{\widetilde{\Sigma}}|(p)\leq C_0
\end{equation*}
for some $C_0,\ \epsilon_0>0$, then there exist constants $\epsilon_1=\epsilon_1(C_0,\epsilon_0, \Gamma'_-,\Gamma'_+)$ and $C_1=C_1(C_0,\epsilon_0, \Gamma'_-,\Gamma'_+)$, so that for any $0<\frac{s_1}{2}<s_1-\delta<s_1<s_2<s_2+\delta<2s_2<\epsilon_1$,
\begin{equation*}
    \big|E_{rel}[\widetilde{\Sigma},\Sigma;\phi_{s_1,s_2,\delta}]\big|\leq C_1s_2.
\end{equation*}
\end{lem}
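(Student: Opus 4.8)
The plan is to establish the two bounds $-C_1 s_2\le E_{rel}[\widetilde\Sigma,\Sigma;\phi_{s_1,s_2,\delta}]\le C_1 s_2$ separately, each by the divergence‑theorem argument from the proof of Theorem \ref{relative entropy}, applied with an appropriate comparison vector field. The lower bound is essentially free: since $\widetilde\Sigma$ is $C^{1,1}$‑asymptotic to $M$ (hence embedded and of locally finite perimeter) and trapped between $\Gamma'_-$ and $\Gamma'_+$, it is the reduced boundary of a Caccioppoli set $U_{\widetilde\Sigma}\in\mathcal C(\Gamma'_-,\Gamma'_+)$, and the chain of estimates (\ref{51})--(\ref{62}) in the proof of Theorem \ref{relative entropy}, specialized to the boundary defining function $r=r_1$ (so that $r=y$ on $\Omega$) and to $(t_1,t_2)=(s_1,s_2)$, gives
\[
    \int_{\widetilde\Sigma}\phi_{s_1,s_2,\delta}\,d\mathcal H^n_{\mathbb H}-\int_\Sigma\phi_{s_1,s_2,\delta}\,d\mathcal H^n_{\mathbb H}\ge -Cs_2 ,
\]
with $C$ depending only on $\Sigma$ and $\Omega$.

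For the matching upper bound I need the reverse inequality $E_{rel}[\Sigma,\widetilde\Sigma;\phi_{s_1,s_2,\delta}]\ge -Cs_2$, obtained by running the same argument with the roles of $\Sigma$ and $\widetilde\Sigma$ interchanged; the only new ingredient is a replacement $\widetilde{\textbf X}$ for the field $\textbf X$ of (\ref{39}), built from the normal projection onto $\widetilde\Sigma$. First, $\widetilde\Sigma$, being $C^1$ up to $M$ and squeezed between $\Gamma'_\pm$ (both of which meet $M$ orthogonally), itself meets $M$ orthogonally; hence near $M$ it is a graph $x_n=u(\textbf x',y)$ with $u\in C^{1,1}$ and $\partial_y u(\textbf x',0)=\textbf 0$, so $|u|=\mathcal O(y^2)$, $|\partial_y u|=\mathcal O(y)$, whence $|\overbar{\textbf n}^{(n+1)}_{\widetilde\Sigma}|\le Cy$ near $M$. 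Next, since $\overbar g=r_1^2 g_P$ is the Euclidean metric on a neighborhood of $\Omega$, the hypothesis $|\overbar A_{\widetilde\Sigma}|\le C_0$ on $\{y\le\epsilon_0\}$ is a uniform bound on the Euclidean second fundamental form of $\widetilde\Sigma$; combined with the facts that $\Sigma$ and $\widetilde\Sigma$ are $\mathcal O(y^2)$‑close near $\{y=y_0\}$ (both are graphs over $M$ with vanishing normal $y$‑derivative on $M$) and that $\Omega$ is a thin neighborhood of $\Sigma$, this shows that for $\epsilon_1=\epsilon_1(C_0,\epsilon_0,\Gamma'_-,\Gamma'_+)$ small the set $\text{cl}(\Omega)\cap\{y\le\epsilon_1\}$ lies well inside the Euclidean focal neighborhood of $\widetilde\Sigma$. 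Consequently the nearest‑point projection $\widetilde\Pi\colon\text{cl}(\Omega)\cap\{y\le\epsilon_1\}\to\text{cl}(\widetilde\Sigma)$ is well‑defined and Lipschitz, and
\[
    \widetilde{\textbf X}(q):=\textbf n_{\widetilde\Sigma}\circ\widetilde\Pi(q)=y\big(\widetilde\Pi(q)\big)\,\overbar{\textbf n}_{\widetilde\Sigma}\circ\widetilde\Pi(q)
\]
is a Lipschitz vector field with $|\widetilde{\textbf X}|_{\mathbb H}=1$ (so $|\langle\widetilde{\textbf X},\textbf n_\Sigma\rangle|\le1$ on $\Sigma$ while $\langle\widetilde{\textbf X},\textbf n_{\widetilde\Sigma}\rangle=1$ on $\widetilde\Sigma$), and, exactly as in (\ref{46}) and (\ref{41}), with $|\widetilde{\textbf X}^{(n+1)}|\le Cy^2$ and $|\text{div}\,\widetilde{\textbf X}|\le Cy$ almost everywhere — the bound on $|\overbar{\text{div}}\,\overbar{\widetilde{\textbf X}}|$ now coming from $|\overbar A_{\widetilde\Sigma}|\le C_0$ together with the Lipschitz bound on $\widetilde\Pi$ rather than from $C^2$‑regularity.

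With $\widetilde{\textbf X}$ at hand I would reproduce the computation (\ref{51})--(\ref{62}) verbatim, with $\textbf X$ replaced by $\widetilde{\textbf X}$, $\Sigma$ by $\widetilde\Sigma$, and $\partial^*U$ by $\Sigma$: apply the divergence theorem (for Lipschitz fields on sets of finite perimeter) to $\phi_{s_1,s_2,\delta}\widetilde{\textbf X}$ on $U_\Sigma\setminus\text{cl}(U_{\widetilde\Sigma})$ and on $U_{\widetilde\Sigma}\setminus\text{cl}(U_\Sigma)$, both contained in $\Omega\cap\{y\le\epsilon_1\}$; bound the divergence integrand by $\phi_{s_1,s_2,\delta}|\text{div}\,\widetilde{\textbf X}|+|\langle\nabla\phi_{s_1,s_2,\delta},\widetilde{\textbf X}\rangle|\le C\phi_{s_1,s_2,\delta}\,y+\tfrac{C}{\delta}y^2\,\chi_{\{s_1-\delta<y<s_1\}\cup\{s_2<y<s_2+\delta\}}$; and invoke the thinness estimate $\mathcal H^n(\Omega\cap\{y=t\})\le Ct^{n+1}$ — a property of $\Omega$ alone, hence unchanged — together with the co‑area formula exactly as in (\ref{61})--(\ref{62}) to conclude $\int_\Sigma\phi_{s_1,s_2,\delta}\,d\mathcal H^n_{\mathbb H}-\int_{\widetilde\Sigma}\phi_{s_1,s_2,\delta}\,d\mathcal H^n_{\mathbb H}\ge -Cs_2$, i.e.\ $E_{rel}[\widetilde\Sigma,\Sigma;\phi_{s_1,s_2,\delta}]\le Cs_2$. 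Taking $C_1$ to be the larger of the two constants yields the claim, with $C_1$ and $\epsilon_1$ depending only on $C_0,\epsilon_0,\Gamma'_-,\Gamma'_+$ (the minimal surface $\Sigma$ being determined by $M$ and the choice of $\Gamma'_\pm$).

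The main obstacle is the reduced regularity of $\widetilde\Sigma$: the projection $\widetilde\Pi$ and the field $\widetilde{\textbf X}$ are only Lipschitz, so the divergence theorem must be invoked in its $BV$ form with $\text{div}\,\widetilde{\textbf X}\in L^\infty$; and one must verify — this is exactly where the uniform bound $|\overbar A_{\widetilde\Sigma}|\le C_0$ and the orthogonality of $\widetilde\Sigma$ are used — that $\Omega$ genuinely lies inside the focal neighborhood of $\widetilde\Sigma$ as $y\to0$, so that $\widetilde{\textbf X}$ is defined on all of $\text{cl}(\Omega)\cap\{y\le\epsilon_1\}$.
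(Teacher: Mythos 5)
Your argument is sound, but it is genuinely different from the paper's. You prove the two bounds separately by running the calibration-style divergence-theorem computation of Theorem \ref{relative entropy} twice: once with the field $\textbf{X}$ built from $\Sigma$ (giving $E_{rel}[\widetilde{\Sigma},\Sigma;\phi_{s_1,s_2,\delta}]\geq -Cs_2$, which indeed only requires $\widetilde{\Sigma}=\partial^*U_{\widetilde{\Sigma}}$ for some $U_{\widetilde{\Sigma}}\in\mathcal{C}(\Gamma'_-,\Gamma'_+)$), and once with the roles reversed, using a second field $\widetilde{\textbf{X}}$ built from the nearest-point projection onto $\widetilde{\Sigma}$; since $\widetilde{\Sigma}$ is only $C^{1,1}$, you must check positive reach near $M$ (via the bound $|\overbar{A}_{\widetilde{\Sigma}}|\leq C_0$, the graph structure over $M$, and the closeness of $\Omega$ to $\widetilde{\Sigma}$), obtain $|\widetilde{X}^{(n+1)}|\leq Cy^2$ and $|\mathrm{div}\,\widetilde{\textbf{X}}|\leq Cy$ only almost everywhere, and invoke the Gauss--Green formula for Lipschitz fields on sets of finite perimeter; the thinness estimate (\ref{42}) is a property of $\Omega$ alone, so the rest goes through, and the constants depend only on $C_0,\epsilon_0,\Gamma'_\pm$ (and the fixed $\Sigma$, $\Omega$), as in the statement. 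The paper instead proves both bounds at once by a single interpolation argument: it writes $\widetilde{\Sigma}\cap\{y\leq\epsilon_1\}$ as a normal graph $f$ over $\Sigma$ along a transverse section, with $|f|\leq Cy^{n+1}$ (from the trapping and the thin neighborhood), $|\overbar{\nabla}_\Sigma f|\leq Cy$, $|\overbar{\nabla}^2_\Sigma f|\leq C$, deforms $\Sigma$ to $\widetilde{\Sigma}$ through $\hat{\Sigma}_s=\{p+sf(p)\overbar{\textbf{v}}(p)\}$, and bounds the absolute value of the first variation of $\int_{\hat{\Sigma}_s}\phi_{s_1,s_2,\delta}\,d\mathcal{H}^n_{\mathbb{H}}$ by $C_1s_2$ using $\textbf{H}_s=\mathcal{O}(y^2)$ and the coarea formula, then integrates in $s$. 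The trade-off: the paper's route avoids any tubular-neighborhood or a.e.-divergence technology for the low-regularity surface (everything lives over the fixed $\Sigma$), at the cost of needing the full graphical estimates on $f$ including second derivatives; your route recycles the machinery already set up in Section 3 and needs only first-order (a.e. second-order) information about $\widetilde{\Sigma}$, at the cost of the reach/Lipschitz-projection verification, which is exactly the point you correctly flag as the step requiring care.
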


\begin{proof}
Since $\Gamma'_-$ and $\Gamma'_+$ are both orthogonal to $\partial_\infty\mathbb{H}^{n+1}$, so is $\widetilde{\Sigma}$. Together with the upper bound for $|\overbar{A}_{\widetilde{\Sigma}}|$, we have the existence of constant $\epsilon_1$, so $\widetilde{\Sigma}\cap\{y\leq\epsilon_1\}$ can be written as a graph over $\Sigma$. That is, for some $C^{1,1}$ function $f$ defined on $\Sigma\cap\{y\leq2\epsilon_1\}$,
\begin{equation*}
    \widetilde{\Sigma}\cap\{y\leq\epsilon_1\}\subset\big\{p+f(p)\overbar{\textbf{v}}(p):\,\textup{p}\in\Sigma\cap\{y\leq2\epsilon_1\}\big\}\subset\widetilde{\Sigma},
\end{equation*}
where $\overbar{\textbf{v}}(p)$ is a $C^{1,1}$ transverse section\footnote{See \cite[Section 2.4]{BWSpace} for the precise definition.} over $\overbar{\Sigma}$ satisfying $\overbar{\textbf{v}}(p)=\overbar{\textbf{n}}_M$ on $M$. The assumptions on $\widetilde{\Sigma}$ immediately give
\begin{equation*}
    |f(p)|\leq Cy^{n+1}(p),\ |\overbar{\nabla}_{\Sigma}f(p)|\leq Cy(p),\ |\overbar{\nabla}^2_{\Sigma}f(p)|\leq C.
\end{equation*}

For $0\leq s\leq1$, we let $\hat{\Sigma}_s=\big\{\textbf{f}_s(p)=p+sf(p)\overbar{\textbf{v}}(p):\,p\in\Sigma\cap\{y\leq2\epsilon_1\}\big\}$. Then, by the first variation formula,
\begin{align}
    \frac{d}{ds}\int_{\hat{\Sigma}_s}\phi_{s_1,s_2,\delta}d\mathcal{H}^n_{\mathbb{H}}=\int_{\hat{\Sigma}_s}\left\langle\nabla\phi_{s_1,s_2,\delta},\textbf{Y}_s^\perp\right\rangle-\phi_{s_1,s_2,\delta}\left\langle\textbf{Y}_s,\textbf{H}_s\right\rangle d\mathcal{H}^n_{\mathbb{H}},\label{20}
\end{align}
where $\textbf{H}_s$ is the mean curvature of $\hat{\Sigma}_s$ and $\textbf{Y}_s=(f\overbar{n}_{\Sigma})\circ\textbf{f}_s^{-1}$. By shrinking $\epsilon_1$ if needed, we can assume that $y(p)$ is comparable to $y(\textbf{f}_s^{-1}(p))$ for $0\leq s\leq1$ and $p\in\hat{\Sigma}_s$. We also have $\textbf{H}_s=y^2\overbar{\textbf{H}}_s+ny(\overbar{e}_{n+1}\cdot\overbar{n}_{\hat{\Sigma}_s})\overbar{n}_{\hat{\Sigma}_s}=\mathcal{O}(y^2)$, where $\mathcal{O}(y^2)$ is uniform in $s$. Hence,
\begin{align}
    &\left|\left\langle\nabla\phi_{s_1,s_2,\delta},\textbf{Y}_s^\perp\right\rangle\right|\leq \frac{C}{\delta}y^{n+1}(p)(\chi_{\{s_1-\delta\leq y\leq s_1\}}+\chi_{\{s_2\leq y\leq s_2+\delta\}});\label{71}\\ &\left|\phi_{s_1,s_2,\delta}\left\langle\textbf{Y}_s,\textbf{H}_s\right\rangle\right|\leq Cy^{n+1}(p)\chi_{\{y\leq s_2+\delta\}}.\label{72}
\end{align}
Then, applying the coarea formula to the right hand side of (\ref{20}),
\begin{align*}
    (\ref{20})=\int_{s_1-\delta}^{s_2+\delta}\frac{1}{r^n}dr\int_{\hat{\Sigma}_s\cap\{y=r\}}\left(\left\langle\nabla\phi_{s_1,s_2,\delta},\textbf{Y}_s^\perp\right\rangle-\phi_{s_1,s_2,\delta}\left\langle\textbf{Y}_s,\textbf{H}_s\right\rangle\right)\frac{1}{|\overbar{\nabla}_{\hat{\Sigma}_s}y|}d\mathcal{H}^{n-1}.
\end{align*}
Combining this with (\ref{71}) and (\ref{72}), we get (\ref{20}) is bounded by $C_1s_2$. Finally, if we take the integral in (\ref{20}) from $s=0$ to $s=1$, the desired estimates follows.
\end{proof}

Now, let's turn to the proof of the monotonicity formula in Theorem \ref{mono}. In fact, we prove the following theorem, of which Theorem \ref{mono} is a special case.
\begin{thm}
Let $\{\Sigma_t\}_{t\in[0,T)}$ be a mean curvature flow in $\mathbb{H}^{n+1}$. We assume that each $\Sigma_t$ is trapped between $\Gamma'_-$ and $\Gamma'_+$ and that $\Sigma_0$ is $C^2$-asymptotic to $M$. Then, $E_{rel}[\Sigma_t,\Sigma]$ exists and is finite for all $0\leq t<T$. There is a constant $E_0=E_0(\Sigma_0,\Gamma'_-,\Gamma'_+)>0$, so that $E_{rel}[\Sigma_t,\Sigma]\geq -E_0$ for all $t\in[0,T)$. Furthermore, for any $0\leq t\leq s<T$ and non-negative $f=f(x,t)$ defined for $(x,t)\in\mathbb{R}^{n+1}_+\times[0,T)$ with
\begin{equation*}
    \sum_{i=0}^3\|(y\nabla_{\mathbb{R}^{n+1}_+})^if\|_\infty+\sum_{i=0}^1\|(y\nabla_{\mathbb{R}^{n+1}_+})^i\partial_t f\|_\infty<\infty,
\end{equation*}
we have\label{21}
\begin{align}\label{23}
    E_{rel}[\Sigma_t,\Sigma;f]=& E_{rel}[\Sigma_s,\Sigma;f]+\int_t^s dr\int_{\Sigma_r}f\left\langle\mathbf{H},\mathbf{H}\right\rangle d\mathcal{H}^n_\mathbb{H}\\
    &-\int_t^s E_{rel}[\Sigma_r,\Sigma;\partial_t f-\Delta f+\left\langle\nabla_{\mathbf{n}_{\Sigma_r}}\nabla f,\mathbf{n}_{\Sigma_r}\right\rangle]dr.\nonumber
\end{align}
\end{thm}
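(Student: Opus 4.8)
The plan is to prove the three assertions in order, obtaining the stated monotonicity as the special case $f\equiv1$ of~(\ref{23}). First, for the existence, finiteness and lower bound of $E_{rel}[\Sigma_t,\Sigma]$: by the preceding lemma each $\Sigma_t$ is $C^{1,1}$-asymptotic to $M$ with $\sup_{0\le t<T,\,p\in\Sigma_t\cap\{y\le\epsilon_0\}}|\overbar{A_{\Sigma_t}}|<\infty$ and $\epsilon_0$ independent of $t$, so each $\Sigma_t$ lies in a thin neighborhood of $\Sigma$ with constants uniform in $t$; Theorem~\ref{relative entropy} then gives that $E_{rel}[\Sigma_t,\Sigma]$ exists in $(-\infty,\infty]$ and is independent of the boundary defining function. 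Applying Lemma~\ref{22} with $\widetilde\Sigma=\Sigma_t$, letting $\delta\to0$ and then $s_1\to0$, yields $|E_{rel}[\Sigma_t,\Sigma]-E_{rel}[\Sigma_t,\Sigma;\chi_{\{y\ge s_2\}}]|\le C_1 s_2$ with $C_1,\epsilon_1$ uniform in $t$; since $E_{rel}[\Sigma_t,\Sigma;\chi_{\{y\ge s_2\}}]$ is finite, this forces $E_{rel}[\Sigma_t,\Sigma]$ to be finite, and it gives $E_{rel}[\Sigma_t,\Sigma]\ge -\mathcal{H}^n_\mathbb{H}(\Sigma\cap\{y\ge s_2\})-C_1 s_2=:-E_0$ with $E_0$ depending only on $\Sigma_0,\Gamma'_-,\Gamma'_+$.

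\emph{The truncated first-variation identity.} Fix $0\le t\le s<T$ and $0<y_1<\epsilon$, and set $G(r,y_1):=E_{rel}[\Sigma_r,\Sigma;\chi_{\{y\ge y_1\}}f(\cdot,r)]=\int_{\Sigma_r\cap\{y\ge y_1\}}f\,d\mathcal{H}^n_\mathbb{H}-\int_{\Sigma\cap\{y\ge y_1\}}f\,d\mathcal{H}^n_\mathbb{H}$, so that $E_{rel}[\Sigma_r,\Sigma;f]=\lim_{y_1\to0}G(r,y_1)$ by Theorem~\ref{weighted entropy}. I would differentiate $G(\cdot,y_1)$ in $r$ by first replacing $\chi_{\{y\ge y_1\}}$ with $\phi_{y_1,\delta}\circ y$, applying the first variation formula along the flow (so that $\partial_r\,d\mathcal{H}^n_\mathbb{H}=-\langle\mathbf{H},\mathbf{H}\rangle\,d\mathcal{H}^n_\mathbb{H}$ and the time derivative of the integrand picks up $\partial_t f+\langle\mathbf{H},\nabla f\rangle$), then using the pointwise identity $\langle\mathbf{H},\nabla f\rangle=\Delta_{\Sigma_r}(f|_{\Sigma_r})+\langle\nabla_{\mathbf{n}_{\Sigma_r}}\nabla f,\mathbf{n}_{\Sigma_r}\rangle-\Delta f$, integrating the $\Delta_{\Sigma_r}f$-term by parts, and on the $\Sigma$-side invoking the minimality of $\Sigma$ (which reduces $-\Delta f+\langle\nabla_{\mathbf{n}_\Sigma}\nabla f,\mathbf{n}_\Sigma\rangle$ to $-\Delta_\Sigma(f|_\Sigma)$ and makes the $\int_\Sigma\partial_t f$ terms cancel). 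After letting $\delta\to0$ this gives
\[
\frac{d}{dr}G(r,y_1)=E_{rel}[\Sigma_r,\Sigma;\chi_{\{y\ge y_1\}}\Psi_r]-\int_{\Sigma_r\cap\{y\ge y_1\}}f\langle\mathbf{H},\mathbf{H}\rangle\,d\mathcal{H}^n_\mathbb{H}+\mathcal{E}(r,y_1),
\]
where $\Psi_r(p,\mathbf{v}):=\partial_t f(p,r)-\Delta f(p,r)+\langle\nabla_{\mathbf{v}}\nabla f(p,r),\mathbf{v}\rangle$ and $\mathcal{E}(r,y_1)$ is a sum of codimension-one integrals over $\Sigma_r\cap\{y=y_1\}$ and $\Sigma\cap\{y=y_1\}$ coming from the moving cutoff and the two integrations by parts.

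\emph{Passing to the limit, and the case $f\equiv1$.} One checks that $\Psi_r\in\mathfrak{X}$ with $\sup_{r}\|\Psi_r\|_{\mathfrak{X}}\le C\big(\sum_{i=0}^3\|(y\nabla_{\mathbb{R}^{n+1}_+})^i f\|_\infty+\sum_{i=0}^1\|(y\nabla_{\mathbb{R}^{n+1}_+})^i\partial_t f\|_\infty\big)<\infty$, so Theorem~\ref{weighted entropy} gives $E_{rel}[\Sigma_r,\Sigma;\chi_{\{y\ge y_1\}}\Psi_r]\to E_{rel}[\Sigma_r,\Sigma;\Psi_r]$ as $y_1\to0$, and by~(\ref{10}) together with the uniform bound on $|E_{rel}[\Sigma_r,\Sigma]|$ from the first paragraph this convergence is dominated uniformly in $r$, so one may integrate in $r$. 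Integrating the identity above over $[t,s]$ and sending $y_1\to0$ (using Theorem~\ref{weighted entropy} for the endpoint terms $G(t,y_1)$ and $G(s,y_1)$) then produces~(\ref{23}), provided the remaining limit $\lim_{y_1\to0}\int_t^s\big(\int_{\Sigma_r\cap\{y\ge y_1\}}f\langle\mathbf{H},\mathbf{H}\rangle\,d\mathcal{H}^n_\mathbb{H}-\mathcal{E}(r,y_1)\big)dr$ exists and equals $\int_t^s\int_{\Sigma_r}f\langle\mathbf{H},\mathbf{H}\rangle\,d\mathcal{H}^n_\mathbb{H}\,dr$. Taking $f\equiv1$ makes $\Psi_r\equiv0$, so $E_{rel}[\Sigma_t,\Sigma]=E_{rel}[\Sigma_s,\Sigma]+\int_t^s\int_{\Sigma_r}\langle\mathbf{H},\mathbf{H}\rangle\,d\mathcal{H}^n_\mathbb{H}\,dr\ge E_{rel}[\Sigma_s,\Sigma]\ge-E_0$, which is Theorem~\ref{mono}.

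\emph{The main obstacle.} The heart of the argument is the near-boundary analysis underlying the last identification: one must show that $\int_{\Sigma_r\cap\{y\ge y_1\}}f\langle\mathbf{H},\mathbf{H}\rangle\,d\mathcal{H}^n_\mathbb{H}-\mathcal{E}(r,y_1)$ converges as $y_1\to0$ to $\int_{\Sigma_r}f\langle\mathbf{H},\mathbf{H}\rangle\,d\mathcal{H}^n_\mathbb{H}$. Each codimension-one term in $\mathcal{E}(r,y_1)$ is a priori only of size $\lesssim y_1^{2-n}$, so for $n\ge3$ these do not vanish separately; the point will be that the $\Sigma_r$- and $\Sigma$-contributions to $\mathcal{E}$ cancel to the required order because $\Sigma_r$ lies $O(y^{n+1})$-close to the minimal surface $\Sigma$ near $\partial_\infty\mathbb{H}^{n+1}$ with $C^{1,1}$ (and, for $t>0$, better) asymptotic control, and the same closeness should force $|\mathbf{H}_{\Sigma_r}|_\mathbb{H}$ to decay fast enough along $\Sigma_r$ that $\int_{\Sigma_r}f\langle\mathbf{H},\mathbf{H}\rangle\,d\mathcal{H}^n_\mathbb{H}$ is finite. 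Establishing these quantitative estimates—in effect bootstrapping the asymptotic regularity of the flow from~(\ref{54}) beyond what the preceding lemma provides—is the delicate part; everything else is bookkeeping of the first variation formula together with applications of Theorems~\ref{relative entropy} and~\ref{weighted entropy}.
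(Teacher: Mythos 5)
Your first paragraph (existence, finiteness, and the lower bound $-E_0$) follows the paper's argument essentially verbatim and is fine. The gap is in the proof of (\ref{23}). You truncate with the sharp cutoff $\chi_{\{y\geq y_1\}}$, differentiate the truncated quantity $G(r,y_1)$ along the flow, and are then forced to track codimension-one error terms $\mathcal{E}(r,y_1)$ over $\Sigma_r\cap\{y=y_1\}$ and $\Sigma\cap\{y=y_1\}$, which you yourself observe are individually of size $y_1^{2-n}$ and hence must cancel between the $\Sigma_r$- and $\Sigma$-contributions for $n\geq3$. That cancellation is precisely the content of the theorem, and you do not prove it: the tools available give only the trapping of $\Sigma_t$ in the thin neighborhood (closeness of order $y^{n+1}$ in position, with no derivative information) and the $C^{1,1}$-asymptotic control $|Du|\leq Cy$, $|D^2u|\leq C$ from the graph estimates for (\ref{54}); matching the boundary integrals to order $o(1)$ would require closeness of $\Sigma_r$ to $\Sigma$ together with its first derivatives, uniformly in $r$, and a decay rate for $|\mathbf{H}_{\Sigma_r}|_\mathbb{H}$ making $\int_{\Sigma_r}f\left\langle\mathbf{H},\mathbf{H}\right\rangle d\mathcal{H}^n_\mathbb{H}$ finite --- none of which is established (you explicitly defer this ``bootstrapping''). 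As written, the core analytic step of the theorem is missing.

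The paper avoids this difficulty entirely, and you should too: instead of a sharp cutoff producing boundary terms, take a smooth cutoff $\eta_\epsilon(y)$ equal to $1$ for $y\geq\epsilon$, vanishing for $y\leq\frac{\epsilon}{2}$, with $|\epsilon^k\eta_\epsilon^{(k)}|\leq C_k$, and fold it into the weight. The first variation along the flow then gives the exact identity (\ref{60}) for $E_{rel}[\Sigma_t,\Sigma;\eta_\epsilon f]$ with no boundary error at all, the term $\left\langle\nabla(\eta_\epsilon f),\mathbf{H}\right\rangle$ being rewritten as $-\Delta(\eta_\epsilon f)+\left\langle\nabla_{\mathbf{n}_{\Sigma_r}}\nabla(\eta_\epsilon f),\mathbf{n}_{\Sigma_r}\right\rangle$ inside the weighted entropy (minimality of $\Sigma$ handles the $\Sigma$-side). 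Because every derivative in $\|\cdot\|_{\mathfrak{X}}$ carries a factor of $y$, and $y\sim\epsilon$ on the support of $\eta_\epsilon'$, the weights $\partial_t(\eta_\epsilon f)-\Delta(\eta_\epsilon f)+\left\langle\nabla_{\mathbf{n}_{\Sigma_r}}\nabla(\eta_\epsilon f),\mathbf{n}_{\Sigma_r}\right\rangle$ are uniformly bounded in $\mathfrak{X}$ and converge pointwise as $\epsilon\to0^+$; the convergence lemma following Theorem \ref{21} (uniform $\mathfrak{X}$-bound plus pointwise convergence implies convergence of the weighted entropies, via (\ref{73}) and (\ref{10})) then passes every entropy term to the limit, and the nonnegative term $\int_t^s\int_{\Sigma_r}\eta_\epsilon f\left\langle\mathbf{H},\mathbf{H}\right\rangle d\mathcal{H}^n_\mathbb{H}\,dr$ converges by monotone convergence, its finiteness coming for free from the identity rather than from any a priori decay of $\mathbf{H}$. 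This is the step your sharp-cutoff route cannot reproduce without substantial additional asymptotic estimates on the flow.
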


\begin{proof}
From Lemma \ref{55} and Lemma \ref{22}, we know that there is $\epsilon_1>0$, so that for any $0<s_1\leq s_2<\epsilon_1$,
\begin{equation}
    \big|E_{rel}[\Sigma_t,\Sigma;\chi_{\{y\geq s_1\}}]-E_{rel}[\Sigma_t,\Sigma;\chi_{\{y\geq s_2\}}]\big|\leq C_1s_2.\label{24}
\end{equation}
This implies the existence and finiteness of $E_{rel}[\Sigma_t,\Sigma]$. In (\ref{24}), letting $s_1\to0^+$, we have
\begin{equation*}
    E_{rel}[\Sigma_t,\Sigma]\geq E_{rel}[\Sigma_t,\Sigma;\chi_{\{y\geq s_2\}}]-C_1s_2.
\end{equation*}
On the other hand,
\begin{equation*}
    E_{rel}[\Sigma_t,\Sigma;\chi_{\{y\geq s_2\}}]=\int_{\Sigma_t}\chi_{\{y\geq s_2\}} d\mathcal{H}^n_\mathbb{H}-\int_{\Sigma}\chi_{\{y\geq s_2\}}d\mathcal{H}^n_\mathbb{H}\geq-\mathcal{H}^n_\mathbb{H}(\Sigma\cap\{y\geq s_2\}).
\end{equation*}
Thus, we can take $E_0=\mathcal{H}^n_\mathbb{H}(\Sigma\cap\{y\geq s_2\})+C_1s_2$.

Now, let us take a smooth cut-off function $\eta_\epsilon(y)=\eta_\epsilon(y(p))$ that is $1$ when $y\geq\epsilon$ and vanishes when $y\leq\frac{\epsilon}{2}$. We can further assume that $\big|\epsilon^k\eta^{(k)}_\epsilon\big|\leq C_k$ for any $k$. Then, computing the first variation, we get
\begin{align}\label{60}
    \frac{d}{dt}E_{rel}[\Sigma_t,\Sigma;\eta_\epsilon f]=E_{rel}[\Sigma_t,\Sigma;\partial_t(\eta_\epsilon f)+\left\langle\nabla (\eta_\epsilon f),\textbf{H}\right\rangle]-\int_{\Sigma_t}\eta_\epsilon f\left\langle\textbf{H},\textbf{H}\right\rangle d\mathcal{H}^n_\mathbb{H}.
\end{align}
Here, we have used the minimality of $\Sigma$. Integrating (\ref{60}) from $t$ to $s$, we get
\begin{align*}
    &E_{rel}[\Sigma_s,\Sigma;\eta_\epsilon f]-E_{rel}[\Sigma_t,\Sigma;\eta_\epsilon f]\\
    =&-\int_t^s dr\int_{\Sigma_r}\eta_\epsilon f\left\langle\textbf{H},\textbf{H}\right\rangle d\mathcal{H}^n_\mathbb{H}+\int_t^s\left(E_{rel}[\Sigma_r,\Sigma;\partial_t(\eta_\epsilon f)+\left\langle\nabla (\eta_\epsilon f),\textbf{H}\right\rangle]\right)dr\\
    =&-\int_t^s dr\int_{\Sigma_r}\eta_\epsilon f\left\langle\textbf{H},\textbf{H}\right\rangle d\mathcal{H}^n_\mathbb{H}+\int_t^s E_{rel}[\Sigma_r,\Sigma;\partial_t(\eta_\epsilon f)-\Delta(\eta_\epsilon f)+\left\langle\nabla_{\textbf{n}_{\Sigma_r}}\nabla(\eta_\epsilon f),\textbf{n}_{\Sigma_r}\right\rangle]dr.
\end{align*}
From the assumptions on $f$ and $\eta_\epsilon$, we know that $\partial_t(\eta_\epsilon f)-\Delta(\eta_\epsilon f)+\left\langle\nabla_{\textbf{n}_{\Sigma_r}}\nabla(\eta_\epsilon f),\textbf{n}_{\Sigma_r}\right\rangle$ is uniformly bounded in $\mathfrak{X}$ and pointwisely converges to $\partial_t f-\Delta f+\left\langle\nabla_{\textbf{n}_{\Sigma_r}}\nabla f,\textbf{n}_{\Sigma_r}\right\rangle$. The lemma below shows that the corresponding relative entropy also converges. By the same reasoning, we have
\begin{equation*}
    E_{rel}[\Sigma_s,\Sigma;\eta_\epsilon f]\to E_{rel}[\Sigma_s,\Sigma;f],\ E_{rel}[\Sigma_t,\Sigma;\eta_\epsilon f]\to E_{rel}[\Sigma_t,\Sigma;f]
\end{equation*}
as $\epsilon\to0^+$. The monotone convergence theorem gives
\begin{equation*}
    \int_t^s dr\int_{\Sigma_r}\eta_\epsilon f\left\langle\textbf{H},\textbf{H}\right\rangle d\mathcal{H}^n_\mathbb{H}\to\int_t^s dr\int_{\Sigma_r} f\left\langle\textbf{H},\textbf{H}\right\rangle d\mathcal{H}^n_\mathbb{H}.
\end{equation*}
Putting them together, we get (\ref{23}).
\end{proof}

\begin{lem}
If $f$ and $f_i$ are functions in $\mathfrak{X}$ with uniformly bounded $\mathfrak{X}$-norms and $f_i$ converges pointwise to $f$, then we have $E_{rel}[\partial^*U,\Sigma;f]=\lim_{i\rightarrow\infty}E_{rel}[\partial^*U,\Sigma;f_i]$ for any $U\in\mathcal{C}(\Gamma'_-,\Gamma'_+)$ with $E_{rel}[\partial^*U,\Sigma]<\infty$.
\end{lem}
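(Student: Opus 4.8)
The plan is to reduce the claim to showing that $E_{rel}[\partial^*U,\Sigma;g_i]\to0$, where $g_i:=f_i-f$. Since $\mathfrak{X}$ is a vector space, each $g_i$ lies in $\mathfrak{X}$ and $\Lambda:=\sup_i\|g_i\|_{\mathfrak{X}}<\infty$; because $E_{rel}[\partial^*U,\Sigma]<\infty$, Theorem \ref{weighted entropy} guarantees that $E_{rel}[\partial^*U,\Sigma;f]$, $E_{rel}[\partial^*U,\Sigma;f_i]$ and $E_{rel}[\partial^*U,\Sigma;g_i]$ all exist. For each fixed $\epsilon>0$ the truncated functional $\psi\mapsto\int_{\partial^*U\cap\{y\geq\epsilon\}}\psi\,d\mathcal{H}^n_{\mathbb{H}}-\int_{\Sigma\cap\{y\geq\epsilon\}}\psi\,d\mathcal{H}^n_{\mathbb{H}}$ is linear, so letting $\epsilon\to0^+$ gives $E_{rel}[\partial^*U,\Sigma;f_i]-E_{rel}[\partial^*U,\Sigma;f]=E_{rel}[\partial^*U,\Sigma;g_i]$, and it suffices to prove the latter tends to $0$. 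Write $\Gamma=\partial^*U$ from now on.

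Next I would peel off the contribution near the ideal boundary. Fix a small level $y_2>0$ that is \emph{good}, meaning $\mathcal{H}^n_{\mathbb{H}}(\Gamma\cap\{y=y_2\})=\mathcal{H}^n_{\mathbb{H}}(\Sigma\cap\{y=y_2\})=0$; by the coarea formula such $y_2$ exist arbitrarily close to $0$. Since $\Gamma$ and $\Sigma$ lie in the bounded set $\text{cl}(\Omega)$, the slab $\text{cl}(\Omega)\cap\{y\geq y_2\}$ is compact in $\mathbb{R}^{n+1}_+$, so $\mathcal{H}^n_{\mathbb{H}}(\Gamma\cap\{y\geq y_2\})$ and $\mathcal{H}^n_{\mathbb{H}}(\Sigma\cap\{y\geq y_2\})$ are finite and $E_{rel}[\Gamma,\Sigma;\chi_{\{y\geq y_2\}}g_i]$ is a genuine finite difference of integrals. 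Splitting the integrals in $E_{rel}[\Gamma,\Sigma;g_i]$ at $\{y=y_2\}$ gives the decomposition
\[
    E_{rel}[\Gamma,\Sigma;g_i]=E_{rel}[\Gamma,\Sigma;\chi_{\{0<y<y_2\}}g_i]+E_{rel}[\Gamma,\Sigma;\chi_{\{y\geq y_2\}}g_i].
\]
For the near-boundary term I would apply $(\ref{73})$ with $\psi=g_i$ and pass to the limit, first $\delta\to0^+$ and then $y_1\to0^+$. The $\delta\to0^+$ step is exactly the dominated-convergence argument used in Section 2 (now on the compact slab $\text{cl}(\Omega)\cap\{y\geq y_1/2\}$, on which $\Gamma$ has finite $\mathcal{H}^n_{\mathbb{H}}$-mass since $U$ has locally finite perimeter, the good-level choice removing the boundary contributions), while the $y_1\to0^+$ step uses the already-established existence of $E_{rel}[\Gamma,\Sigma;g_i]$ and of $E_{rel}[\Gamma,\Sigma]$. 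This yields, uniformly in $i$,
\[
    \big|E_{rel}[\Gamma,\Sigma;\chi_{\{0<y<y_2\}}g_i]\big|\leq C\Lambda\Big(\big|E_{rel}[\Gamma,\Sigma]-E_{rel}[\Gamma,\Sigma;\chi_{\{y\geq y_2\}}]\big|+y_2\Big)=:C\Lambda\,\omega(y_2),
\]
and $\omega(y_2)\to0$ as $y_2\to0^+$ because $E_{rel}[\Gamma,\Sigma;\chi_{\{y\geq y_2\}}]\to E_{rel}[\Gamma,\Sigma]$, the latter being finite.

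For the bulk term, since $f_i\to f$ pointwise on $\text{cl}(\Omega)\times\mathbb{S}^n$ we have $g_i(p,\mathbf{n}_\Gamma(p))\to0$ for $\mathcal{H}^n_{\mathbb{H}}$-a.e.\ $p\in\Gamma$ and $g_i(p,\mathbf{n}_\Sigma(p))\to0$ $\mathcal{H}^n_{\mathbb{H}}$-a.e.\ on $\Sigma$, with $|g_i|\leq\Lambda$; dominated convergence over the finite-measure sets $\Gamma\cap\{y\geq y_2\}$ and $\Sigma\cap\{y\geq y_2\}$ then gives $E_{rel}[\Gamma,\Sigma;\chi_{\{y\geq y_2\}}g_i]\to0$ as $i\to\infty$, for each fixed $y_2$. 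Combining the two estimates: given $\theta>0$, first fix a good level $y_2$ with $C\Lambda\,\omega(y_2)<\theta/2$, then choose $N$ so that $|E_{rel}[\Gamma,\Sigma;\chi_{\{y\geq y_2\}}g_i]|<\theta/2$ for all $i\geq N$; for such $i$ the displayed decomposition gives $|E_{rel}[\Gamma,\Sigma;g_i]|<\theta$, which proves $E_{rel}[\Gamma,\Sigma;g_i]\to0$ and hence the lemma. The one delicate point is the limit passage in $(\ref{73})$ — showing $E_{rel}[\Gamma,\Sigma;\phi_{y_1,y_2,\delta}g_i]\to E_{rel}[\Gamma,\Sigma;\chi_{\{y_1\leq y\leq y_2\}}g_i]$ and the same with weight $\equiv 1$ — which is the dominated-convergence argument of Section 2 applied on a compact slab with level sets of zero $\Gamma$- and $\Sigma$-measure; the rest is bookkeeping with the estimates already proved.
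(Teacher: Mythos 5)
Your argument is correct and is essentially the paper's own proof: both apply the estimate (\ref{73}) to $\psi=f_i-f$, pass to the limits $\delta\to0^+$ and $y_1\to0^+$ to bound the contribution below a fixed level $y_2$ uniformly in $i$ by $C\big(|E_{rel}[\partial^*U,\Sigma]-E_{rel}[\partial^*U,\Sigma;\chi_{\{y\geq y_2\}}]|+y_2\big)$, handle the region $\{y\geq y_2\}$ by dominated convergence, and conclude with the same two-parameter ($y_2$ then $i$) argument. Your extra touches (working with $g_i=f_i-f$ explicitly, choosing a ``good'' level $y_2$ with null slices to justify the $\delta\to0^+$ limit) are harmless refinements of the same route.
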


\begin{proof}
Assume $\|f\|_\mathfrak{X}\leq C$ and $\|f_i\|_\mathfrak{X}\leq C$ for all $i$. For any $y_1, y_2$ small with $0<\frac{y_1}{2}<y_1-\delta<y_1<y_2<y_2+\delta<2y_2<\epsilon$, where $\epsilon$ is the same as in Theorem \ref{weighted entropy},
\begin{align*}
    |E_{rel}[\partial^*U,\Sigma;(f_i-f)\phi_{y_1,y_2,\delta}]|&\leq C(|E_{rel}[\partial^*U,\Sigma;\phi_{y_1,y_2,\delta}]|+y_2)\|f_i-f\|_\mathfrak{X}\\
    &\leq C (|E_{rel}[\partial^*U,\Sigma;\phi_{y_1,y_2,\delta}]|+y_2).
\end{align*}
Letting $\delta\to0^+$ and $y_1\to0^+$,
\begin{align*}
    &|E_{rel}[\partial^*U,\Sigma;(f_i-f)]-E_{rel}[\partial^*U,\Sigma;(f_i-f)\chi_{\{y\geq y_2\}}]|\\
    &\leq C(|E_{rel}[\partial^*U,\Sigma]-E_{rel}[\partial^*U,\Sigma;\chi_{\{y\geq y_2\}}]|+y_2).
\end{align*}
This implies
\begin{align*}
    \big|E_{rel}[\partial^*U,\Sigma;(f_i-f)]\big|\leq& \big|E_{rel}[\partial^*U,\Sigma;(f_i-f)\chi_{\{y\geq y_2\}}]\big|\\
    &+C(|E_{rel}[\partial^*U,\Sigma]-E_{rel}[\partial^*U,\Sigma;\chi_{\{y\geq y_2\}}]|+y_2).
\end{align*}
For any $\epsilon'>0$, we fix $y_2<\min\{\epsilon,\epsilon'\}$ so small that $|E_{rel}[\partial^*U,\Sigma]-E_{rel}[\partial^*U,\Sigma;\chi_{\{y\geq y_2\}}]|<\epsilon'$. By the dominated convergence theorem, for any fix $y_2$, $\big|E_{rel}[\partial^*U,\Sigma;(f_i-f)\chi_{\{y\geq y_2\}}]\big|<\epsilon'$ for $i$ sufficiently large. So, putting these together, we see that for any $\epsilon'>0$, $\big|E_{rel}[\partial^*U,\Sigma;(f_i-f)]\big|\leq C\epsilon'$ for $i$ sufficienly large. This concludes the proof.
\end{proof}

\appendix\section{A computation for geodesics in hyperbolic place\label{64}}
We verify the relative entropy is conformally invariant for $n=1$ by computing out the explicit formula of the relative entropy for geodesics in hyperbolic plane. Specifically, consider the upper half space model, ($\mathbb{R}^2_+,\frac{1}{y^2}(dx\otimes dx+dy\otimes dy)$), of $\mathbb{H}^2$. Let $a_1<a_2<a_3<a_4$ be four points on the real line. Let $\gamma_1$ be the union of the geodesic joining $a_1$ and $a_2$ (denoted by $\gamma_{1,1}$) and the geodesic joining $a_3$ and $a_4$ (denoted by $\gamma_{1,2}$). Let $\gamma_2$ be the union of the geodesic joining $a_1$ and $a_3$ (denoted by $\gamma_{2,1}$) and the geodesic joining $a_2$ and $a_4$ (denoted by $\gamma_{2,2}$). We will give an explicit formula for $E_{rel}[\gamma_1,\gamma_2]$.

It is easy to check that $\gamma_{1,1}$ is the upper half circle defined implicitly by the equation
\begin{equation}\label{65}
    y^2+(x-a_1)(x-a_2)=0,\ y\geq0.
\end{equation}
Then,
\begin{align*}
    \mathcal{H}^1_\mathbb{H}(\gamma_{1,1}\cap\{y\geq\epsilon\})=\int_{\gamma_{1,1}\cap\{y\geq\epsilon\}}\frac{1}{y}d\mathcal{H}^1_\mathbb{H}=\int_{x_1(\epsilon)}^{x_2(\epsilon)}\frac{1}{y}\sqrt{1+\big(y'(x)\big)^2}dx,
\end{align*}
where $y$ is thought of as a function in $x$ and $x_1(\epsilon)<x_2(\epsilon)$ are the roots to the equation $\epsilon^2+(x-a_1)(x-a_2)=0$. Differentiating (\ref{65}), we get $yy'+(x-\frac{a_1+a_2}{2})=0$. Hence,
\begin{equation*}
    \sqrt{1+\big(y'(x)\big)^2}=\frac{1}{y}\sqrt{y^2+\left(x-\frac{a_1+a_2}{2}\right)^2}=\frac{a_2-a_1}{2y}.
\end{equation*}
As a result,
\begin{align*}
    \mathcal{H}^1_\mathbb{H}(\gamma_{1,1}\cap\{y\geq\epsilon\})=\int_{x_1(\epsilon)}^{x_2(\epsilon)}\frac{a_2-a_1}{2y^2}dx=\int_{x_1(\epsilon)}^{x_2(\epsilon)}\frac{a_2-a_1}{2(a_2-x)(x-a_1)}dx.
\end{align*}
Notice that $x_i(\epsilon)=\frac{1}{2}(a_1+a_2\pm\sqrt{(a_2-a_1)^2-4\epsilon^2})$. So,
\begin{equation*}
    \mathcal{H}^1_\mathbb{H}(\gamma_{1,1}\cap\{y\geq\epsilon\})=\ln\left(\frac{x_2(\epsilon)-a_1}{x_1(\epsilon)-a_1}\right)=2\ln\left(\frac{a_2-a_1+\sqrt{(a_2-a_1)^2-4\epsilon^2}}{2\epsilon}\right).
\end{equation*}
Then, we do similar computations for $\gamma_{1,2}$, $\gamma_{2,1}$ and $\gamma_{2,2}$. As explained in Section 4, we can pick a boundary defining function $r_1$ satisfying $r_1(\textbf{x},y)=y$ in a compact subset of $\overbar{\mathbb{R}^{n+1}_+}$ containing $\Omega$. So,
\begin{align*}
    E_{rel}[\gamma_1,\gamma_2]&=\lim_{\epsilon\to0}\sum_{i=1}^2\mathcal{H}^1_\mathbb{H}(\gamma_{1,i}\cap\{y\geq\epsilon\})-\sum_{i=1}^2\mathcal{H}^1_\mathbb{H}(\gamma_{2,i}\cap\{y\geq\epsilon\})\\
    &=\lim_{\epsilon\to0}2\ln\left(\frac{(a_2-a_1+\sqrt{(a_2-a_1)^2-4\epsilon^2})(a_4-a_3+\sqrt{(a_4-a_3)^2-4\epsilon^2})}{(a_3-a_1+\sqrt{(a_3-a_1)^2-4\epsilon^2})(a_4-a_2+\sqrt{(a_4-a_2)^2-4\epsilon^2})}\right)\\
    &=2\ln\left(\frac{(a_2-a_1)(a_4-a_3)}{(a_3-a_1)(a_4-a_2)}\right)=2\ln [a_1,a_4;a_2,a_3],
\end{align*}
where $[a_1,a_4;a_2,a_3]$ is the cross ratio. It is preserved under the M\"obius transformations of $\mathbb{S}^1$.

Let $\gamma_{3,1}$ be geodesic joining $a_1$ and $a_3$, and let $\gamma_{3,2}$ be the geodesic joining $a_2$ and $a_4$. Let $\gamma_3=\gamma_{3,1}\sqcup\gamma_{3,2}$. Then, $\gamma_3$ is stationary in $\mathbb{H}^2$ but has a singular point. Consider $E_{rel}[\gamma_i,\gamma_j]$ with $1\leq i,j\leq3$ and $i\neq j$. This gives $6$ different values of $E_{rel}$ in general. So, the relative entropy gives every permutation of the cross ratio of $a_1$, $a_2$, $a_3$ and $a_4$.

\bibliographystyle{alpha}
\bibliography{hyperbolic.bib}

\end{document}